\newtheorem{theorem}{Theorem}[section]
\newtheorem{assumption}{Assumption}[section]
\newtheorem{lemma}{Lemma}[section]
\newtheorem{corollary}{Corollary}[section]
\theoremstyle{remark}
\newtheorem{remark}{Remark}[section]
\newtheorem{example}{Example}[section]
\newcommand{\esp}{{\mathbb E}}
\newcommand{\pr}{{\mathbb P}}
\newcommand{\var}{{\mathrm{var}}}
\newcommand{\cov}{{\mathrm{cov}}}
\newcommand{\tcr}{\textcolor{red}}
\newcommand{\plim}{{\ \stackrel{\mathbb{P}}\to} \ }
\newcommand{\equallaw}{{\ \stackrel{law}=} \ }
\newcommand{\dlim}{{\to}}
\newcommand{\func}{{\Rightarrow}}
\numberwithin{equation}{section}
\begin{document}

\title{Limit Laws in Transaction-Level Asset Price Models}

\author{Alexander Aue\thanks{Department of Statistics, University of California,
    Davis, One Shields Avenue, Davis, CA 95616, USA, email: {\tt
      alexaue@wald.ucdavis.edu}} \and Clifford Hurvich\footnote{Stern School of
    Business, New York University, Henry Kaufman Management Center, 44 West
    Fourth Street, New York, NY 10012, USA, email: {\tt churvich@stern.nyu.edu}}
  \and Philippe Soulier\footnote{Universit\'e Paris X, 200 avenue de la
    R\'epublique, 92001 Nanterre cedex, France, email: {\tt
      philippe.soulier@u-paris10.fr}}}

\maketitle

\begin{abstract}
  We consider pure-jump transaction-level models for asset prices in continuous
  time, driven by point processes. In a bivariate model that admits
  cointegration, we allow for time deformations to account for such effects as
  intraday seasonal patterns in volatility, and non-trading periods that may be
  different for the two assets. We also allow for asymmetries (leverage
  effects). We obtain the asymptotic distribution of the log-price process. We
  also obtain the asymptotic distribution of the ordinary least-squares
  estimator of the cointegrating parameter based on data sampled from an
  equally-spaced discretization of calendar time, in the case of weak fractional
  cointegration. For this same case, we obtain the asymptotic distribution for a
  tapered estimator under more general assumptions.  In the strong fractional
  cointegration case, we obtain the limiting distribution of a
  continuously-averaged tapered estimator as well as other estimators of the
  cointegrating parameter, and find that the rate of convergence can be affected
  by properties of intertrade durations. In particular, the persistence of
  durations (hence of volatility) can affect the degree of cointegration. We
  also obtain the rate of convergence of several estimators of the cointegrating
  parameter in the standard cointegration case. Finally, we consider the
  properties of the ordinary least squares estimator of the regression parameter
  in a spurious regression, i.e., in the absence of cointegration.

\end{abstract}

\section{Introduction}
\label{sec:intro}
The increasingly widespread availability of transaction-level financial price
data motivates the development of models to describe such data, as well as
theory for widely-used statistics of interest under the assumption of a given
transaction-level generating mechanism. We focus here on a bivariate pure-jump
model in continuous time for log prices proposed by
\cite{hurvich:wang:2009preprint,hurvich:wang:2009} which yields fractional
or standard cointegration. The motivation for using a pure-jump model is that
observed price series are step functions, since no change is possible in
observed prices during time periods when there are no transactions. Examples of
data sets that would fit into the framework of this model include: buy prices
and sell prices of a single stock; prices of two different stocks within the
same industry; stock and option prices of a given company; option prices on a
given stock with different degrees of maturity or moneyness; corporate bond
prices at different maturities for a given company; Treasury bond prices at
different maturities.

Though our paper is not entirely focused on the case of fractional
cointegration, we present here some evidence that this case may arise
in practice in financial econometrics. We considered option and
underlying best-available bid prices for 69 different options on IBM
at 390 one-minute intervals from 9:30 AM to 4 PM on May 31, 2007.
Using a log-periodogram estimator based on $390^{0.5}$ frequencies,
we found that the logs of the original series had estimated memory
parameters close to 1, while the residuals from the OLS regression of
the log stock price on the log option price had estimated memory
parameters that were typically less than 1. Specifically, of the 69
estimated memory parameters based on these residuals, the values
ranged from 0.05 to 1.14 with a mean of 0.55 and a standard deviation
of 0.28, with 30 of these estimates lying between 0.5 and 1, while 32
were between 0 and 0.5. Thus, there is evidence for cointegration in
most of the series studied, and often the evidence points towards
fractional rather than standard cointegration. Furthermore, the OLS
estimate of the cointegrating parameter (assuming that cointegration
exists) ranged from $-0.21$ to 0.39, with a mean of 0.04 and a
standard deviation of 0.13. This provides evidence that the
cointegrating parameter is in general not equal to one in the present
context, so it is of interest to study properties of estimates of
this parameter.

Two basic questions that we address in this paper are the asymptotic
distribution of the log prices as time $t \to \infty$, and of the usual
OLS estimator of the cointegrating parameter based on $n$ observations of the
log prices at equally-spaced time intervals as $n \to \infty$. Most of
the existing methods for deriving such limit laws (see \cite{MR1869235}) cannot
be applied here because the continuous-time log-price series are not diffusions
and because the discretized log-price series are not linear in either an
i.i.d.~sequence, a martingale difference sequence or a strong mixing
sequence. Nevertheless, it is of interest to know whether and under what
conditions the existing limit laws, based, say, on linearity assumptions in
discrete time, continue to hold under a transaction-level generating mechanism.

In the model of \cite{hurvich:wang:2009,hurvich:wang:2009preprint} the price
process in continuous time is specified by a counting process giving the
cumulative number of transactions up to time $t$, together with the process of
changes in log price at the transaction times. This structure corresponds to the
fact that most transaction-level data consists of a time stamp giving the
transaction time as well as a price at that time. In such a setting, another
observable quantity of interest is the $durations$, i.e., the waiting times
between successive transactions of a given asset. There is a growing literature
on univariate models for durations, including the seminal paper of
\cite{MR1639411} on the autoregressive conditional duration models (ACD), as
well as \cite{MR2057105} on the stochastic duration model (SCD),
and \cite{Deo20103715} on the long-memory stochastic duration model (LMSD).

\cite{deo:hurvich:soulier:wang:2009} showed that, subject to regularity
conditions, if partial sums of centered durations, scaled by $n^{-(d+1/2)}$ with
$d \in [0,1/2)$, satisfy a functional central limit theorem then the counting
process $N(t)$ has long or short memory (for $d>0$, $d=0$, respectively) in the
sense that ${\rm Var} N(t) \sim C t^{2d+1}$ as $t \to \infty$ (with
$C>0$), and they gave conditions under which this scaling would lead to long
memory in volatility. In particular, LMSD durations with $d>0$ lead to long
memory in volatility. The latter property has been widely observed in the
econometrics literature, while evidence for long memory in durations was found
in \cite{Deo20103715}.

\cite{hurvich:wang:2009,hurvich:wang:2009preprint} did not derive limit laws for
the log price series or the OLS estimator of the cointegrating parameter, but
focused instead on properties of variances and covariances for log price series
and returns, and on lower bounds on the rate of convergence for the OLS
estimator.

In this paper, for a slightly modified version of the model of
\cite{hurvich:wang:2009,hurvich:wang:2009preprint}, but under assumptions that
are more general than theirs, we obtain the limit law for the log prices, and
for the OLS and tapered estimators of the cointegrating parameter. In our result
on the limit law for log prices, Theorem \ref{theo:fclt-levels}, we allow for a
stochastic time-varying intensity function in the counting processes. This
allows for such effects as dynamic intraday seasonality in volatility (as
observed, for example, in \cite{MR2275995}, as well as fixed nontrading
intervals such as holidays and overnight periods. We also allow in most of our
results for asymmetries (leverage effects), and show that this opens up the
possibility that long memory in durations may affect the rate of convergence of
estimators of the cointegrating parameter. This raises some heretofore
unrecognized ambiguities in the choice of a definition of standard
cointegration. Finally, we consider the properties of the ordinary least squares
estimator in a spurious regression, i.e., in the absence of
cointegration.

The remainder of this paper is organized as follows. In
Section~\ref{sec:model-assumptions} we write the model for the log price series
and state our assumptions on the counting process, the time-deformation
functions, and the return shocks. In Section \ref{sec:main}, we provide our main
results on: the long-run behavior of the log-price process (Subsection
\ref{sec:LongRun}), the OLS estimator for the cointegrating parameter under weak
fractional, strong fractional and standard cointegration
(Subsection~\ref{sec:ols-cointegration}), a tapered estimator under weak
fractional, strong fractional and standard cointegration
(Subsection~\ref{sec:Taper}), a continuously-averaged tapered estimator under
strong fractional and standard cointegration (Subsection
\ref{sec:Taper-continuous-time}) and the ordinary least squares estimator in the
spurious regression case (Subsection \ref{sec:spurious}). Section
\ref{sec:proofs} provides proofs.

\section{Transaction level model}
\label{sec:model-assumptions}

As in \cite{hurvich:wang:2009,hurvich:wang:2009preprint}, we consider a
bivariate pure-jump transaction-level price model that enables cointegration.
We define the log-price process $y=(y_1,y_2)=(y(t)\colon t\geq 0)$ by
\begin{align}
  y_1(t) & = \sum_{k=1}^{N_1(t)}(e_{1,k} + \eta_{1,k}) +
  \theta  \sum_{k=1}^{N_2(t_{1,N_1(t)})}   e_{2,k}, \label{eq:P1}  \\
  y_2(t) & = \sum_{k=1}^{N_2(t)}(e_{2,k} + \eta_{2,k}) + \theta^{-1}
  \sum_{k=1}^{N_1(t_{2,N_2(t)})} e_{1,k}, \label{eq:P2}
\end{align}
where for $i=1,2$, $N_i(\cdot)$ are counting processes on the real line (see
\citet[page 43]{daley:vere-jones:2003}) such that, for $t \geq 0$, $N_i(t)$
gives the total number of transactions of Asset $i$ in $(0,t]$, and $t_{i,k}$ is
the clock time (calendar time) for the $k$th transaction of Asset $i$, with
$\cdots t_{i,-1} \leq t_{i,0} \leq 0 < t_{i,1} \leq t_{i,2} \cdots$.  The
quantity $N_2(t_{1,N_1(t)})$ denotes the number of transactions of Asset 2
between time $0$ and the time $t_{1,N_1(t)}$ of the most recent transaction of
Asset 1, with an analogous interpretation for $N_1(t_{2,N_2(t)})$. The efficient
shock sequences $\{e_{i,k}\}_{k=1}^{\infty}$ model the permanent component and
the microstructure noise sequences $\{\eta_{i,k}\}_{k=1}^{\infty}$ model the
transitory component of the log-price process. Efficient shock spillover effects
are weighted by $\theta$ and $\theta^{-1}$, thus yielding cointegration with
cointegrating parameter $\theta$, assumed nonzero. A detailed economic
justification for this model, derivation of a common-components representation,
as well as a comparison with certain discrete-time models, is given in
\cite{hurvich:wang:2009,hurvich:wang:2009preprint}.  \tcr{more explanations}

In the mathematical theory presented in this paper, all random variables and
stochastic processes are defined on a single probability space
$(\Omega,\mathcal{F},\pr)$. Expectation with respect to $\pr$ will be denoted by
$\esp$ and 
$\var$ and $\cov$ will denote the variance and covariance with respect to
$\pr$. Convergence in $\pr$-probability will be denoted by~$\plim$, convergence
in distribution under $\pr$ of sequences of random variables will be denoted
by~$\dlim$.  We use $\func$ to denote weak convergence under $\pr$ in the space
$\mathcal D([0,\infty))$ of left-limited, right-continuous (c\`adl\`ag)
functions, endowed with Skorohod's $J_1$ topology.  See \cite{MR0233396} or
\cite{MR1876437} for details about weak convergence in $\mathcal
D([0,\infty))$. Whenever the limiting process is continuous, this topology can
be replaced by the topology of uniform convergence on compact sets.

Following \citet[page 47]{daley:vere-jones:2003}, a point process is said to be
$simple$ if the probability is zero that there exists a time $t$ at which more
than one event occurs.  We do not assume that the counting processes are
simple. Thus we allow for the possibility that several transactions may occur at
the same time. The transaction times $t_{i,k}$ are related to the point process
by the following duality.
\begin{align*}
  N_i(t) = k \Leftrightarrow t_{i,k} \leq t < t_{i,k+1} \; .
\end{align*}
The durations are then defined by
\begin{align*}
   \tau_{i,k} = t_{i,k}-t_{i,k-1} \; .
\end{align*}
If the process is simple, then $N_i(t_{i,k})=k$. Otherwise, it only holds that
$N_i(t_{i,k}) \geq k$. 

There is no requirement that either the point processes $N_i$ or the durations
$\{\tau_k\}$ be stationary under $\pr$. Therefore we make the following
ergodicity-type assumptions.

\begin{assumption}
  \label{hypo:lfgn-dates}
  The sequences $\{t_{i,k}\}$ are nondecreasing and there exists $\lambda_i \in
  (0,\infty)$ such that
\begin{align}
  \label{eq:ergodic-dates}
  t_{i,k}/k \plim 1/\lambda_i \; .
\end{align}
\end{assumption}

When the counting processes are simple, this is equivalent to $N_i(t)/t \plim
\lambda_i$. Since we do not assume simplicity, we must introduce an additional
assumption.

\begin{assumption}
  \label{hypo:lfgn-pp}
  $N_i(t)/t \plim \lambda_i$.
\end{assumption}

If the counting processes are defined from stationary ergodic durations, then
Assumption~\ref{hypo:lfgn-dates} holds. If the couting processes are moreover
simple, then Assumption~\ref{hypo:lfgn-pp} also holds.  Conversely, if $N_i$ is
stationary and ergodic then Assumption~\ref{hypo:lfgn-pp} holds, and if $N_i$ is
moreover simple, then Assumption~\ref{hypo:lfgn-dates} also holds.

It should be stressed that stationarity of durations and
of the point process cannot hold simultaneously under the same probability
measure (except for the Poisson point process). See for instance
\cite{baccelli:bremaud:2003} or \cite{daley:vere-jones:2003} for the
mathematical theory and \cite{deo:hurvich:soulier:wang:2009} for an econometric
interpretation.  
%
%
We now give an example which illustrates this duality.

\begin{example}
  \label{xmpl:LMSD}
  For the LMSD model, consider a probability measure (called the Palm measure)
  $P^0$ on $(\Omega,\mathcal F)$ under which 
  $\tau_{k} = \epsilon _{k} \mathrm e^{\sigma Y_{k}}$ where $\sigma$ is a
  positive constant, $\{\epsilon_{k}\}$ is an i.i.d.~sequence of almost surely
  positive random variables with finite mean and $\{Y_{k}\}$ is a stationary
  standard Gaussian process, independent of $\{\epsilon_{k}\}$, whose covariance
  function goes to 0 at infinity.  It follows from the latter assumption and
  Gaussianity that the process $\{Y_{k}\}$ is ergodic \cite{MR543837}, hence so
  is $\{\tau_{k}\}$.  Suppose now that the restriction of $\pr$ to the
  sigma-field generated by $N$ is equal to $P^0$, then
  Assumption~\ref{hypo:lfgn-dates} holds with $\lambda^{-1} = \mu \mathrm
  e^{\sigma^2/2}$ where $\mu$ is the expectation of $\epsilon_0$ under $P^0$,
  and since the durations are almost surely positive, so does
  Assumption~\ref{hypo:lfgn-pp}. Now, by the Palm duality theory, there exists a
  probability measure $P$ under which the associated point process $N$ is
  stationary (in which case, as mentioned above, the durations are no longer
  stationary). If the restriction of $\pr$ to the sigma-field generated by $N$
  is equal to $P$, then Assumptions~\ref{hypo:lfgn-dates} and~\ref{hypo:lfgn-pp}
  still hold with the same $\lambda$ as defined above.

\end{example}

\begin{example}
  \label{xmpl:ACD}
   The ACD model proposed in \cite{MR1639411} is
\begin{align}
   \label{eq:acd}
   \tau_k & = \psi_k\epsilon_k,\qquad
   \psi_k=\omega+\alpha\tau_{k-1}+\beta\psi_{k-1},\qquad k\in\mathbb{Z},
\end{align}
where $\omega>0$ and $\alpha,\beta\geq 0$, $\{\epsilon_k\}_{k=-\infty}^\infty$
is an i.i.d.~sequence of almost surely positive random variables with mean~1. If
$\alpha+\beta<1$ then the sequence $\{\tau_k\}_{k=-\infty}^\infty$ is strictly
stationary and ergodic and has finite mean $\omega/(1-\alpha-\beta)$.  As in the
above example, we can alternatively assume that the durations form a stationary
ACD process under $\pr$ or that the associated point process is stationary under
$\pr$.  In both cases, Assumptions~\ref{hypo:lfgn-dates} and~\ref{hypo:lfgn-pp}
hold with $\lambda = (1-\alpha-\beta)/\omega$.
\end{example}

We now explain
how time deformations can be used to obtain a nonstationary, possibly non simple
point process from a stationary ergodic simple one, while
Assumptions~\ref{hypo:lfgn-dates} and~\ref{hypo:lfgn-pp} are preserved. Let
$\tilde N(\cdot)$ be a simple, stationary and ergodic counting process on
$\mathbb R$ with intensity $\tilde\lambda\in(0,\infty)$ and let $f$ be a
deterministic or random function such that $f$ is nondecreasing and has
c\`adl\`ag paths with probability one.  Define then
\[
N(t)=\tilde N(f(t)) \; .
\]
If the function $f$ is random, we assume moreover that it is independent of the
counting process $\tilde N$.

The function $f$ is used to speed up or slow down the trading clock. To
incorporate dynamic intraday seasonality in volatility, the same time
deformation can be used in each trading period (of length, say, $T$), assuming
that $f(t)$ has a periodic derivative (with period $T$ and with probability
one), for example, $f(t)=t+.5\sin(2\pi t/T)$. Fixed nontrading intervals, say,
$t \in [T_1,T_2)$, could be accommodated by taking $f(t)=f(T_1)$ for $t \in
[T_1,T_2)$ so that $f(t)$ remains constant for $t$ in this interval, and then
taking $f(T_2) > f(T_1)$ so that $f(t)$ jumps upward when trading resumes at
time $T_2$. The jump allows for the possibility of one or more transactions at
time $T_2$, potentially reflecting information from other markets or assets that
did trade in the period $[T_1,T_2)$.

If the values of some series are only recorded at specific time points (e.g.,
quarterly in the case of certain macroeconomic series) this could be handled by
taking the corresponding $f(t)$ to be a pure-jump function.  This provides scope
for considering two (or more) series, some of which are observed continuously,
others at specific times, though not necessarily contemporaneously. In future
work, we hope to explore this scenario in detail, and its possible connections
with the MIDAS methodology, see \cite{MR2288650}.

The use of the time-varying intensity function $f$ renders the counting process
$N$ nonstationary.  Since it is possible that $f$ has (upward) jumps, the $N$
may also not be simple even though the $\tilde N$ are simple.  We now show,
however, that if $\tilde N$ satisfies Assumptions~\ref{hypo:lfgn-dates}
and~\ref{hypo:lfgn-pp}, then so does the time deformed $N$ under some restrictions on $f$.

\begin{lemma}
  \label{lem:TimeDeformation-nonstationaire}
  Assume that $f$ is a nondecreasing (random) function such that $t^{-1}f(t)
  \plim \gamma \in(0,\infty)$ and
\[
\sup_{t\geq 0} |f(t) - f(t^-)| \leq C
\]
with probability one, where $C \in (0,\infty)$ is a deterministic constant. Let
$\tilde N$ be a point process such that Assumptions~\ref{hypo:lfgn-dates}
and~\ref{hypo:lfgn-pp} hold for some $\lambda\in(0,\infty)$. Let $N$ be the counting
process defined by $N(\cdot)=\tilde N(f(\cdot))$.  Then
Assumptions~\ref{hypo:lfgn-dates} and~\ref{hypo:lfgn-pp} hold for $N$ with
$\lambda=\tilde\lambda\gamma$.
\end{lemma}

Stationarity of $\tilde N$ is not required in this Lemma. For example, one could
equally well do the time deformation on a counting process that corresponds to a
stationary duration sequence (and hence the counting process is not stationary).

In order to show that our results on estimation of the cointegrating parameter
(under weak fractional and standard cointegration) hold in this time deformation
framework, we will in Lemma~\ref{lem:time-deformation} make further assumptions
on the function $f$. These assumptions mathematically embody natural economic
constraints, viz.\ minimum duration of trading and nontrading periods, maximum
duration of nontrading periods and non stoppage of trading time during trading
periods.

We now state our assumptions on the return shocks.

\begin{assumption}
  \label{hypo:efficient-shocks}
  The efficient shocks $\{e_{i,k}\}$ are mutually independent i.i.d. sequences
  with zero mean and variance $\sigma_{i,e}^2$.
\end{assumption}

Although many of our results would continue to hold if the i.i.d.~part of
Assumption~\ref{hypo:efficient-shocks} were replaced by a weak-dependence
assumption, we maintain the i.i.d.~assumption here in keeping with the economic
motivation for the model as provided by \cite{hurvich:wang:2009} that in the
absence of the microstructure shocks and in the absence of any dependence of the
efficient shocks on the counting processes, each of the log price series would
be a martingale with respect to its own past. Since the trades of Asset 1 are
not synchronized in calendar time or in transaction time with those of Asset 2,
it seems reasonable to assume that the two efficient shock series are mutually
independent, as we have done in Assumption~\ref{hypo:efficient-shocks}.

The following assumption implies that the microstructure noise does not affect
the limiting distribution of the log prices. 

\begin{assumption}[Microstructure Noise]
  \label{hypo:microstructure}
  The microstructure noise sequences $\{\eta_{i,k}\}$ satisfy $n^{-1/2}
  \sum_{k=1}^{[n\cdot]} \eta_{i,k} \func 0$.
\end{assumption}

This assumption simply enforces the negligibility of the microstructure noise
with respect to the long term behaviour of the log price and indeed allows us to
estabish in Theorem~\ref{theo:fclt-levels} that the log price behaves
asymptotically as a random walk (and returns at long horizons behave like a
martingale difference sequence), consistent with what is assumed in most
econometric literature.

Dependence between the counting processes and return shocks allows
for leverage effects (for example, a correlation between a return in
one time period and a squared return in a subsequent time period). A
transaction-level model yielding a leverage effect was proposed (but
justified only with simulations) in \cite{hurvich:wang:2009preprint}.
Models where the point process need not be independent of the return
shocks were discussed in \cite{MR1821828} in the context of
option pricing with marked point processes.

We do not make any assumption of independence between the counting
processes and the microsctucture shocks, unless explicitly noted
otherwise.  We will, however, assume that the counting processes are
independent of the efficient shocks except in
Theorem~\ref{theo:fclt-levels} and in Section~\ref{sec:spurious}.

Assumption \ref{hypo:microstructure} is all we need to assume about the
microstructure noise in order to obtain a limit law for the log price series
(such as Theorem \ref{theo:fclt-levels} below). However, in order to discuss
properties of estimators of the cointegrating parameter it is necessary to make
more specific assumptions on the degree of cointegration. In
\cite{hurvich:wang:2009preprint,hurvich:wang:2009}, three different cases were
considered, according to the strength of the memory of the microstructure noise
sequences. These cases were labeled as weak fractional cointegration, strong
fractional cointegration and standard cointegration. In the current context,
where there may be a dependence between return shocks and counting processes,
special care is needed in defining the strong fractional and standard
cointegration cases, as long memory in durations may affect the rate of
convergence of estimators of the cointegrating parameter in these cases. On the
other hand, we will define weak fractional cointegration essentially as in
\cite{hurvich:wang:2009}.

\begin{assumption}
   \label{as:independence-shocks}
   The shocks $\{e_{1,k}\}_{k=-\infty}^{\infty}$,
   $\{e_{2,k}\}_{k=-\infty}^{\infty}$, $\{\eta_{1,k}\}_{k=-\infty}^{\infty}$ and
   $\{\eta_{2,k}\}_{k=-\infty}^{\infty}$ are mutually independent.
\end{assumption}

Mutual independence of the efficient and microstructure shock series of a given
asset can be justified on economic grounds, and is often made in the econometric
literature for calendar-time models. See, e.g., \cite{MR2468558}. Mutual
independence of the two microstructure series is justified by the lack of
synchronization of the trading times of the two assets.

We now discuss the weak fractional cointegration case. For
$H\in(0,1)$, let $B_H$ denote the standard fractional Brownian motion (FBM) with
Hurst index $H$, i.e. the zero mean Gaussian process with almost surely
continuous sample paths and covariance function
\begin{align*}
  \mathrm {cov} (B_H(s),B_H(t)) = \frac12 \left\{s^{2H} - |t-s|^{2H} + t^{2H} \right\} \; .
\end{align*}
For $H=1/2$, $B_{1/2}$ is the standard Brownian motion.

\begin{assumption}[Weak Fractional Cointegration]
  \label{hypo:microstructure-weak}
There exists $H \in (0,1/2)$ such that
\begin{align*}
  n^{-H} \sum_{k=1}^{[n\cdot]} \eta_{i,k} \func c_i B_H^{(i)}
\end{align*}
where $c_1,c_2$ are nonnegative constants, not both zero and $B_H^{(1)}$ and
$B_H^{(2)}$ are independent standard fractional Brownian motions with common
Hurst index~$H$.
\end{assumption}

Assumption~\ref{hypo:microstructure-weak} is a strengthening of
Assumption~\ref{hypo:microstructure} and is needed to establish the asymptotic
distribution of estimators of the cointegrating parameter $\theta$. Whereas
Assumption~\ref{hypo:microstructure} allows for cointegration, an assumption
such as Assumption~\ref{hypo:microstructure-weak} is a necessary element for
defining the degree of cointegration (the rate at which deviations from the long
run cointegrating relationship disappear), and is consistent with the finding
reported in Section~\ref{sec:intro} that the cointegrating residuals have weaker
memory than a random walk.

Under Assumption~\ref{as:independence-shocks}, the independence of all the noise
series implies that all the previous convergences hold jointly.  The situation
where one of the constants $c_1$ or $c_2$ is zero could arise naturally if the
memory in one of the microstructure noise series is weaker than for the other.

In the case of weak fractional cointegration, we can define the memory parameter
of the microstructure noise series as $d_{\eta}=H-1/2 \in (-1/2,0)$, and the
degree of fractional cointegration (i.e. the rate of convergence of partial sums
of the cointegrating error) is completely determined by $d_{\eta}$. More
precisely, in this case the difference between the memory parameters of the
series of log prices and the cointegrating error (observed, say, at
equally-spaced intervals of calendar time) $y_1(j)-\theta y_2(j)$, is
$-d_{\eta}$. This holds regardless of any dependence between the counting
processes and the microstructure shocks.

Next we discuss  strong fractional and standard cointegration. We start with
the assumption that, for $i=1,2$, $\eta_{i,k}=\xi_{i,k}-\xi_{i,k-1}$
where $\{\xi_{i,k}\}$ satisfy $\sup_{k}\esp[\xi_{i,k}^2] < \infty$.
It then follows that the cointegrating error at time $j$ is

\begin{multline}
  y_1(j) - \theta y_2(j) \\
  = \sum_{k=N_1(t_{2,N_2(j )})+1}^{N_1(j )} e_{1,k} - \theta
  \sum_{k=N_2(t_{1,N_1(j )})+1}^{N_2(j )} e_{2,k} + \xi_{1,N_1(j )}-\xi_{1,0} -
  \theta (\xi_{2,N_2(j )}-\xi_{2,0}). \label{DiffCointErrors}
\end{multline}
Under the assumptions we will make in this paper, and also under the assumptions
made in \cite{hurvich:wang:2009,hurvich:wang:2009preprint}, the first two terms
on the righthand side of (\ref{DiffCointErrors}) are weakly dependent, so the
degree of cointegration is determined by the rate of convergence of partial sums
of $\xi_{i,N_i(j )}$.

Thus we will need to study the sequence $\xi_{i,N_i(j)}$.  We do not assume that
the microstructure shocks are independent of the counting processes. Thus, even
if the microstructure shocks have zero mean, it may hold that
$\esp[\xi_{i,N_i(j)}] \ne 0$.

In view of the discussion above it is clear that in order to specify the degree
of cointegration in the strong fractional and standard cointegration cases, it
is necessary to make an assumption on calendar-time aggregates of
$\xi_{i,N_i(j)}$. This is in contrast to Assumption 2.6 above where the degree
of fractional cointegration is specified in terms of properties of
transaction-level aggregates of the microstructure noise.

\begin{assumption} [Strong fractional and standard cointegration]
  \label{hypo:microstructure-strong}
  The microstucture noise sequences $\{\eta_{i,k}\}$ can be expressed as
  $\eta_{i,k} = \xi_{i,k}-\xi_{i,k-1}$. There exist $H\in[1/2,1)$, constants
  $\mu_1^*$, $\mu_2^*$ and nonnegative constants $c_1,c_2$, not both zero, such
  that
  \begin{align*}
    n^{-H} \sum_{j=1}^{[n\cdot]} \{\xi_{i,N_i(j)} - \mu_i^*\} \func c_i
    B_H^{(i)}
  \end{align*}
  where $B_H^{(1)}$ and $B_H^{(2)}$ are independent fractional Brownian motions
  with Hurst index $H$.

\end{assumption}
The case $H>1/2$ corresponds to strong fractional cointegration whereas the case
$H=1/2$ corresponds to standard cointegration.

It might be hard to verify Assumption~\ref{hypo:microstructure-strong} unless
the durations are integer valued. Since commonly-used duration models do not
have integer-valued durations, we will introduce a modification of the
estimators which involves integrals instead of sums, thus allowing us to avoid
this restriction. This change requires a corresponding modification of
Assumption~\ref{hypo:microstructure-strong}.

\begin{assumption} [Strong fractional and standard cointegration]
  \label{hypo:microstructure-strong-taper-continuous}
  The microstucture noise sequences $\{\eta_{i,k}\}$ can be expressed as $\eta_{i,k} =
  \xi_{i,k}-\xi_{i,k-1}$. There exist $H\in[1/2,1)$, constants $\mu_1^*$,
$\mu_2^*$ and nonnegative constants $c_1,c_2$, not both zero, such that
  \begin{align*}
    n^{-H} \int_{0}^{n\cdot} \{\xi_{i,N_i(s)} - \mu_i^*\} \, \mathrm d s
  \func c_i B_H^{(i)} \; .
  \end{align*}

\end{assumption}

In their strong fractional cointegration case, \cite{hurvich:wang:2009} assumed,
for $d_{\eta} \in (-1,-1/2)$, that $\mathrm{cov}(\xi_{i,k},\xi_{i,k+j}) \sim K
j^{2d_{\xi}-1}$ as $j \to \infty$ where $K>0$ and $d_{\xi} = d_{\eta}+1
\in (0,1/2)$. They then showed (in their Lemma 3), under the assumptions made
there, that $\mathrm{cov}(\xi_{i,N_i(k)},\xi_{i,N_i(k+j)}) \sim K^\prime j^{2d_{\xi}-1}$
as $j \to \infty$ where $K^\prime >0$, so that the degree of fractional
cointegration was completely determined by the rate of decay of
$\mathrm{cov}(\xi_{i,k},\xi_{i,k+j})$. However, the proof of this result relied on the
assumption that the microstructure shocks are independent of the counting
processes, an assumption which we do not make here.

We next provide an example showing that under dependence between the
microstructure shocks and the counting processes, it is possible for
$\{\xi_{i,k}\}$ to be weakly dependent, and yet the rate of
convergence of suitably normalzed integrals of the process
$(\xi_{i,N_i(t)} : t \geq 0)$ is determined by the degree of long
memory in durations. Suppressing the $i$ subscript, we have the
following lemma.

\begin{lemma}
  \label{lem:propogation}
  Suppose that $\{\tau_k\}$ is given by the LMSD model $\tau_k=\epsilon_k\mathrm
  e^{Y_k}$, $\{\epsilon_k\}$ are i.i.d.  standard exponential, independent of
  the stationary standard Gaussian series $\{Y_k\}$, which satisfies $\mathrm
  {cov} (Y_0,Y_n) \sim cn^{2H_\tau-2}$ where $c>0$, and $H_\tau \in
  (1/2,3/4)$. Define $\xi_k = Y_{k+1}^2-1$.  Then the autocovariance function of
  $\{\xi_k\}$ is summable and there exists $c'>0$ such that
\begin{align}
  \label{eq:weak-dep}
  n^{-1/2} \sum_{k=1}^{[n\cdot]} \xi_k \func c' B \; .
\end{align}
Nevertheless, the randomly-indexed continuous-time process $\xi_{N(t)}$ has long
memory in the sense that there exists a constant $\mu^*$, as well as a constant
$c^{\prime\prime}$, such that
\begin{align}
  \label{eq:lrd-random-index}
  n^{-H_\tau}  \int_0^{n\cdot} \{\xi_{N(s)} - \mu^*\} \, \mathrm d s \func
  c^{\prime\prime} B_{H_\tau} \; .
\end{align}
\end{lemma}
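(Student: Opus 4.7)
The plan is to treat the two assertions separately. For \eqref{eq:weak-dep}, observe that $\xi_k = Y_{k+1}^2 - 1 = H_2(Y_{k+1})$, where $H_2$ is the second (probabilists') Hermite polynomial. Applying Isserlis' identity to the stationary Gaussian sequence $\{Y_k\}$ gives $\mathrm{cov}(\xi_0,\xi_n) = 2\{\mathrm{cov}(Y_0,Y_n)\}^2 \sim 2c^2 n^{4H_\tau - 4}$, which is absolutely summable because $H_\tau < 3/4$ forces $4H_\tau - 4 < -1$. The functional convergence \eqref{eq:weak-dep} then follows from the Breuer--Major FCLT for $L^2$ functionals of short-range dependent Gaussian sequences, with $(c')^2 = \sum_{n\in\mathbb Z}\mathrm{cov}(\xi_0,\xi_n)$.

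For \eqref{eq:lrd-random-index}, I would start from the exact identity
\begin{align*}
  \int_0^n \xi_{N(s)}\,\mathrm d s = \sum_{j=1}^{N(n)} (Y_j^2-1)\,\tau_j + (Y_{N(n)+1}^2-1)(n-t_{N(n)}) = \sum_{j=1}^{N(n)} Z_j + R_n,
\end{align*}
where $Z_j := (Y_j^2-1)\,\epsilon_j\,\mathrm e^{Y_j}$ and the boundary term $R_n$ is dominated by $(Y_{N(n)+1}^2+1)\tau_{N(n)+1}$, hence $R_n = o_P(n^{H_\tau})$. Elementary Gaussian computations using $\esp[\mathrm e^Y]=\esp[Y\mathrm e^Y]=\mathrm e^{1/2}$ and $\esp[Y^2\mathrm e^Y]=2\mathrm e^{1/2}$ yield $\mu_Z := \esp[Z_1]=\mathrm e^{1/2}$; together with $\lambda^{-1}=\esp[\tau_1]=\mathrm e^{1/2}$ this suggests taking $\mu^* := \lambda\mu_Z = 1$, after which
\begin{align*}
  \int_0^n \{\xi_{N(s)}-\mu^*\}\,\mathrm d s = \sum_{j=1}^{N(n)}(Z_j-\mu_Z) + \mu_Z\{N(n)-\lambda n\} + R_n.
\end{align*}

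The crux is that, up to a martingale-difference remainder of order $n^{1/2}$, both surviving sums are Hermite-rank-one functionals of the \emph{same} long-memory Gaussian sequence. Writing $Z_j-\mu_Z = (\epsilon_j-1)h(Y_j) + \{h(Y_j)-\mu_Z\}$ with $h(y)=(y^2-1)\mathrm e^y$, the first piece is a square-integrable martingale-difference whose partial sum is $O_P(n^{1/2})$, while $h(Y_j)-\mu_Z$ has nonvanishing first Hermite coefficient $\esp[Y h(Y)] = \esp[Y^3\mathrm e^Y] - \esp[Y\mathrm e^Y] = 3\mathrm e^{1/2}$. The analogous decomposition $\tau_j-\lambda^{-1} = (\epsilon_j-1)\mathrm e^{Y_j} + (\mathrm e^{Y_j}-\mathrm e^{1/2})$ isolates a Hermite-rank-one component with first coefficient $\esp[Y\mathrm e^Y] = \mathrm e^{1/2}$. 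Taqqu's reduction principle and the rank-one FCLT for long-memory Gaussian subordination (applied jointly), combined with the standard transfer from partial sums of durations to the counting process, deliver joint convergence to a \emph{common} FBM,
\begin{align*}
  \Big(n^{-H_\tau}\!\!\sum_{j=1}^{[\lambda n\cdot]}(Z_j-\mu_Z),\ n^{-H_\tau}\{N(n\cdot)-\lambda n\cdot\}\Big) \Rightarrow (\alpha_1 B_{H_\tau},\,\alpha_2 B_{H_\tau}),
\end{align*}
and summation gives \eqref{eq:lrd-random-index} with $c'' = \alpha_1 + \mu_Z\alpha_2$; a direct check using $\mu_Z\lambda=1$ shows $c''\neq 0$.

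The main technical obstacle is replacing the random upper index $N(n)$ by $[\lambda n]$ in $\sum_{j=1}^{N(n)}(Z_j-\mu_Z)$. Since $|N(n)-\lambda n| = O_P(n^{H_\tau})$ and partial sums of the rank-one long-memory component over $m$ consecutive indices are of size $m^{H_\tau}$, the substitution costs at most $O_P(n^{H_\tau\cdot H_\tau}) = o_P(n^{H_\tau})$; making this rigorous requires a uniform maximal bound on partial sums of rank-one Gaussian subordinated processes, which is available via hypercontractivity in a fixed chaos. Once that uniform control is in hand, the joint rank-one FCLT above closes the argument, and the boundary term $R_n$ is disposed of by a routine moment estimate on a single inspected duration.
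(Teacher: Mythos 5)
Your proposal is correct and follows essentially the same route as the paper: the identical Hermite-rank-two covariance computation for \eqref{eq:weak-dep}, the same exact decomposition of $\int_0^n \xi_{N(s)}\,\mathrm d s$ into $\sum_k \tau_{k+1}\xi_k$ plus a boundary term, the same centering $\mu^*=\lambda\mu_Z=1$, the same split into a martingale-difference part of order $n^{1/2}$ and Hermite-rank-one parts for both $(Y^2-1)\mathrm e^{Y}$ and the durations (coefficients $3\mathrm e^{1/2}$ and $\mathrm e^{1/2}$), and the same conclusion that the two contributions ride a common fractional Brownian motion with nonvanishing combined coefficient $2\mathrm e^{1/2}$. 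The only local difference is technical: where the paper handles the random upper index via Vervaat's lemma together with continuity of the composition map, you replace $N(n)$ by $[\lambda n]$ directly at an $o_P(n^{H_\tau})$ cost, which requires the maximal bound you flag but is otherwise an equivalent (slightly heavier) device.
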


Lemma \ref{lem:propogation} shows that long memory in durations can induce the
same degree of long memory in the cointegrating error (\ref{DiffCointErrors}) in
calendar-time, even though the microstructure shocks, which are the source of
the cointegration, have short memory as a sequence in transaction time. In Lemma
\ref{lem:propogation}, this phenomenon was achieved by imposing a particular
functional relationship between the (zero mean) microstructure noise and the
persistent component of the durations, $\xi_k=Y_{k+1}^2-1$. This relationship
implies a leverage effect, since $\mathrm{corr}(\xi_k, \tau_{k+1}) =
1/\sqrt{2(e-1)} \approx .539 > 0$.
In other words, a strongly negative microstructure shock to the return leads to
a tendency of the next observed duration, as well as subsequent durations, to be
shorter than average. Such a string of short durations increases the volatility,
e.g., the expectation of squared calendar-time returns, as shown, for example,
under a particular return model in
\cite{deo:hurvich:soulier:wang:2009}. Furthermore, evidence that stock market
intertrade durations have long memory was provided in \cite{Deo20103715}.

In the absence of dependence between the counting processes and microstructure
noise series, in both cases of strong fractional and standard cointegration, the
memory of durations cannot affect the memory of the cointegrating error. See
Lemma~\ref{lem:strong-noleverage-continuous} for strong fractional cointegration
and Lemma~\ref{lem:standard-dependence-noleverage} for standard cointegration.

\section{Main results}

\label{sec:main}

\subsection{The long-run behavior of the bivariate log-price process}
\label{sec:LongRun}

With the assumptions made in Section~\ref{sec:model-assumptions}, the long-run
behavior of the bivariate process $y=(y_1,y_2)$ can be determined.
The following theorem shows that the log-prices are approximately
integrated. Even though independence is assumed between the various
shock series, the log-price process $y=(y(t)\colon t\geq 0)$ exhibits
a nontrivial variance-covariance structure which is determined by a
complex interplay of the model parameters.

\begin{theorem}
  \label{theo:fclt-levels}
  Under Assumptions~\ref{hypo:lfgn-dates}, \ref{hypo:lfgn-pp},
  \ref{hypo:efficient-shocks}, \ref{hypo:microstructure}, $n^{-1/2}
  (y_1(n\cdot),y_2(n\cdot)) \func \mathbb B$, where
  \begin{align}
   \label{eq:fclt-levels}
   \mathbb B = \left( \sigma_{1,e} \sqrt {\lambda_1} B_1 + \theta \sigma_{2,e} \sqrt
     {\lambda_2} B_2 \, , \; \theta^{-1} \sigma_{1,e}\sqrt
     {\lambda_1} B_1 + \sigma_{2,e} \sqrt {\lambda_2} B_2 \right) \; .
  \end{align}
  and $B_1$ and $B_2$ are independent standard Brownian motions.
\end{theorem}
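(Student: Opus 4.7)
The plan is to reduce to Donsker's theorem for the i.i.d.\ efficient-shock sequences and then push the various random time changes through by a functional law of large numbers combined with continuous-mapping arguments. By Assumption~\ref{hypo:efficient-shocks} the sequences $\{e_{1,k}\}$ and $\{e_{2,k}\}$ are independent i.i.d.\ families, so Donsker gives
\begin{align*}
\bigl(n^{-1/2}\textstyle\sum_{k=1}^{[n\cdot]}e_{1,k},\ n^{-1/2}\sum_{k=1}^{[n\cdot]}e_{2,k}\bigr) \Rightarrow (\sigma_{1,e}B_1,\sigma_{2,e}B_2),
\end{align*}
with $B_1,B_2$ independent standard Brownian motions, while Assumption~\ref{hypo:microstructure} gives $n^{-1/2}\sum_{k=1}^{[n\cdot]}\eta_{i,k}\Rightarrow 0$.

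Next I would promote Assumptions~\ref{hypo:lfgn-dates} and \ref{hypo:lfgn-pp} to uniform-on-compacts versions. Monotonicity of $N_i(n\cdot)/n$ together with the continuous deterministic limit $\lambda_i t$ upgrades the pointwise convergence in Assumption~\ref{hypo:lfgn-pp} to uniform convergence on compacts in probability by a standard Dini-type argument. For the synchronization, the duality $t_{i,N_i(nt)}\le nt<t_{i,N_i(nt)+1}$, combined with $t_{i,k}/k\stackrel P\to 1/\lambda_i$ and the uniform version of Assumption~\ref{hypo:lfgn-pp} just obtained, yields $t_{i,N_i(n\cdot)}/n\to(\cdot)$ uniformly on compacts in probability; composing once more gives $N_j(t_{i,N_i(n\cdot)})/n\to \lambda_j(\cdot)$ uniformly on compacts.

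Although the counting processes are allowed to depend on the efficient shocks in this theorem, every time-change process just constructed has a deterministic continuous limit, so Slutsky produces joint convergence of the Donsker limits and the time-change limits without any further independence hypothesis. The composition map $(w,\phi)\mapsto w\circ\phi$ is continuous at pairs in which $\phi$ is continuous and nondecreasing, so continuous mapping delivers
\begin{align*}
n^{-1/2}\sum_{k=1}^{N_i(n\cdot)}e_{i,k}&\Rightarrow\sigma_{i,e}B_i(\lambda_i\cdot),\\
n^{-1/2}\sum_{k=1}^{N_2(t_{1,N_1(n\cdot)})}e_{2,k}&\Rightarrow\sigma_{2,e}B_2(\lambda_2\cdot),
\end{align*}
and the symmetric counterpart with indices $1$ and $2$ swapped, all jointly. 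The microstructure terms drop out after the same time change, since the normalized partial sums tend to $0$ uniformly on compacts while $N_i(n\cdot)/n$ stays bounded on compacts; no independence between $\eta$ and $N$ is required.

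Summing the pieces according to the definitions of $y_1$ and $y_2$ and using the Brownian scaling identity $B_i(\lambda_i\,\cdot)\stackrel d= \sqrt{\lambda_i}\,B_i(\cdot)$ (as a process, jointly in $i$) yields the announced limit $\mathbb B$. I expect the main obstacle to be the careful handling of the synchronization term $N_2(t_{1,N_1(nt)})$: pushing its uniform-on-compacts convergence through without any simplicity assumption on the $N_i$, and securing joint convergence of everything with the Donsker limits despite the allowed dependence between counting processes and efficient shocks, which is exactly what the deterministic-limit Slutsky step above is designed to accomplish.
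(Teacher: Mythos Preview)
Your proposal is correct and follows essentially the same route as the paper: Donsker for the independent i.i.d.\ efficient shocks, a Dini-type upgrade of Assumptions~\ref{hypo:lfgn-dates} and~\ref{hypo:lfgn-pp} to uniform-on-compacts convergence (the paper packages this as Lemma~\ref{lem:dini}, including the synchronization term $N_i(t_{j,N_j(n\cdot)})/n\to\lambda_i(\cdot)$), and then continuous mapping via composition, with the deterministic time-change limits supplying the joint convergence without any independence between shocks and counting processes. The only cosmetic difference is that the paper writes the limit directly as $\sigma_{i,e}\sqrt{\lambda_i}\,B_i$ rather than invoking the Brownian scaling identity separately.
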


In Theorem~\ref{theo:fclt-levels}, we have not assumed that the counting
processes are independent of either the efficient shocks or the microstructure
shocks.

 \cite{hurvich:wang:2009,hurvich:wang:2009preprint} have in their
Theorem 1 computed the long-run variances of $y_1(t)$ and $y_2(t)$ which are
given as $(\sigma_{1,e}^2 \lambda_1 + \theta^2 \sigma_{2,e}^2 \lambda_2 )t$ and
$(\sigma_{1,e}^2 \lambda_1 / \theta^2 + \sigma_{2,e}^2 \lambda_2 )t$,
respectively. Our theorem yields the variances as well as the covariances in the
limiting distribution of $(t^{-1/2}y(t)\colon t\geq 0)$. More importantly, our
theorem provides the limiting distribution itself for the (normalized) log-price
process $y$ which, in turn, can be used for asymptotic statistical
inference. Indeed, most of the subsequent results in this paper use Theorem
\ref{theo:fclt-levels} and its proof as a building block. In particular, a
slightly generalized version of this theorem directly yields asymptotics for
estimators in spurious regressions and therefore can be used to motivate tests
for the null hypothesis of no cointegration, as we describe in Section
\ref{sec:spurious}.

\subsection{OLS estimator of the cointegrating parameter}
\label{sec:ols-cointegration}

In this section, we derive the asymptotic behavior of the ordinary
least-squares estimator (OLS) of the cointegrating parameter
$\theta$. To do so, we assume that the log-price series are observed
at integer multiples of $\Delta t$. We will work here, without loss
of generality, with $\Delta t=1$ in order to keep the notation
simple. Then (\ref{eq:P1}) and~(\ref{eq:P2}) become
\begin{align*}
  y_1(j) & =\sum_{k=1}^{N_1(j)}(e_{1,k}+\eta_{1,k})+\theta
  \sum_{k=1}^{N_2(t_{1,N_1(j)})} e_{2,k}, 
  \\
  y_2(j)&=\sum_{k=1}^{N_2(j)}(e_{2,k}+\eta_{2,k})+\theta^{-1}
  \sum_{k=1}^{N_1(t_{2,N_2(j)})} e_{1,k}. 
\end{align*}
Regressing $y_1(1),\ldots,y_1(n)$ on $y_2(1),\ldots,y_2(n)$ without
intercept, we obtain the OLS estimator of $\theta$ as
\begin{equation} \label{ols}
\hat\theta_{n}^{\scriptscriptstyle{OLS}}=\frac{\sum_{j=1}^ny_2(j)y_1(j)}{\sum_{j=1}^ny_2^2(j)}.
\end{equation}
\cite{hurvich:wang:2009,hurvich:wang:2009preprint} have shown in their Theorem 6
(under conditions that are for the most part stronger than the ones
we assume here) that $\hat\theta_n^{\scriptscriptstyle{OLS}}$ is weakly consistent for $\theta$
and obtained a lower bound on the rate of convergence in the case of
weak fractional, strong fractional and standard cointegration. The
exact limit distributions, however, were not given. We fill in this
gap next for weak fractional cointegration.

\subsubsection{OLS and inference for the cointegrating parameter under weak fractional cointegration}

\begin{theorem}
   \label{theo:ols-weak}
   Let Assumptions \ref{hypo:lfgn-dates}, \ref{hypo:lfgn-pp},
   \ref{hypo:efficient-shocks}, \ref{as:independence-shocks} and
   \ref{hypo:microstructure-weak} hold. Assume in addition that the counting
   processes $N_1$ and $N_2$ are mutually independent and independent of the
   efficient shocks and there exists a constant $C$ such that
  \begin{align}
    \label{eq:pp-renforce}
    \sup_{s\geq0} \esp[ t_{i,N_i(s)+1}-s] \leq C \; .
  \end{align}
  Then
  \begin{align*}
    n^{1/2-H} (\hat \theta_{n}^{\scriptscriptstyle{OLS}}-\theta) \dlim \Sigma_1 \frac{\int_0^1 B(t) B_H(t)
      \, \mathrm d t}{\int_0^1 B^2(t) \, \mathrm d t}
  \end{align*}
  where $B$ is a standard Brownian motion, $B_H$ is a fractional Brownian motion,
  independent of $B$, and
\begin{align*}
  \Sigma_1^2 = \frac{c_1^2\lambda_1^{2H} + \theta^2 c_2^2 \lambda_2^{2H}} {\theta^{-2}
    \lambda_{1} \sigma_{1,e}^2 + \lambda_2 \sigma_{2,e}^2} \; .
\end{align*}

\end{theorem}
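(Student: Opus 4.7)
The plan is to start from the standard OLS decomposition
\begin{align*}
  \hat\theta_n^{\scriptscriptstyle{OLS}} - \theta = \frac{\sum_{j=1}^n y_2(j)\,[y_1(j) - \theta y_2(j)]}{\sum_{j=1}^n y_2^2(j)},
\end{align*}
and to obtain the joint weak limit of rescaled numerator and denominator. Theorem~\ref{theo:fclt-levels} already delivers $n^{-1/2}y_2([n\cdot]) \Rightarrow Z$, where $Z := \theta^{-1}\sigma_{1,e}\sqrt{\lambda_1}\,B_1 + \sigma_{2,e}\sqrt{\lambda_2}\,B_2$ is a Brownian motion with variance per unit time $\sigma_Z^2 = \theta^{-2}\lambda_1\sigma_{1,e}^2 + \lambda_2\sigma_{2,e}^2$. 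Viewing $n^{-2}\sum_{j=1}^n y_2^2(j)$ as the Riemann sum $\int_0^1 \bigl(n^{-1/2}y_2([nt])\bigr)^2\mathrm dt$ and invoking the continuous mapping theorem yields $n^{-2}\sum_{j=1}^n y_2^2(j) \Rightarrow \int_0^1 Z^2(t)\,\mathrm dt$, so the denominator is of order $n^2$.

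Next I would analyse the cointegrating error. A direct expansion of \eqref{eq:P1}--\eqref{eq:P2} gives
\begin{align*}
  y_1(j) - \theta y_2(j) = \sum_{k=1}^{N_1(j)}\eta_{1,k} - \theta\sum_{k=1}^{N_2(j)}\eta_{2,k} + R_j,
\end{align*}
where $R_j$ collects the two efficient-shock edge sums $\sum_{k=N_1(t_{2,N_2(j)})+1}^{N_1(j)} e_{1,k}$ and $-\theta\sum_{k=N_2(t_{1,N_1(j)})+1}^{N_2(j)} e_{2,k}$. Conditioning on the counting processes and exploiting their assumed independence from the efficient shocks, together with Assumption~\ref{hypo:efficient-shocks}, a Wald-type computation bounds $\esp[R_j^2]$ by a multiple of $\esp[N_1(j)-N_1(t_{2,N_2(j)})] + \esp[N_2(j)-N_2(t_{1,N_1(j)})]$; condition~\eqref{eq:pp-renforce} combined with Assumption~\ref{hypo:lfgn-pp} forces this to be $O(1)$ uniformly in $j$, so $\max_{j\leq n}|R_j|=o_P(n^H)$ for any $H>0$. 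For the leading $\eta$-terms, Assumption~\ref{hypo:microstructure-weak} together with the Skorohod time-change driven by $N_i([n\cdot])/n \to \lambda_i\cdot$ (uniformly on compacts, from Assumptions~\ref{hypo:lfgn-dates}--\ref{hypo:lfgn-pp} and monotonicity of $N_i$) and $H$-self-similarity of FBM give $n^{-H}\sum_{k=1}^{N_i([n\cdot])}\eta_{i,k} \Rightarrow c_i\lambda_i^H B_H^{(i)}$. Hence $n^{-H}\bigl(y_1([n\cdot])-\theta y_2([n\cdot])\bigr) \Rightarrow W$ for an FBM $W$ built as a linear combination of $B_H^{(1)}, B_H^{(2)}$.

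Joint convergence of $\bigl(n^{-1/2}y_2([n\cdot]),\,n^{-H}(y_1([n\cdot])-\theta y_2([n\cdot]))\bigr)$ to $(Z,W)$ is inherited from Assumption~\ref{as:independence-shocks}: $Z$ is a functional of the efficient-shock Brownian motions $B_1,B_2$ while $W$ is a functional of the microstructure FBMs $B_H^{(1)},B_H^{(2)}$, and these four processes are independent, so $Z\perp W$. A further application of the continuous mapping theorem to the integral functional yields $n^{-H-3/2}\sum_{j=1}^n y_2(j)[y_1(j)-\theta y_2(j)] \Rightarrow \int_0^1 Z(t)W(t)\,\mathrm dt$, and dividing by the denominator produces $n^{1/2-H}(\hat\theta_n^{\scriptscriptstyle{OLS}}-\theta) \Rightarrow \int_0^1 Z W\,\mathrm dt\,\big/\int_0^1 Z^2\,\mathrm dt$. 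Writing $Z=\sigma_Z B$ and $W=\sigma_W B_H$ with $B,B_H$ independent standards and $\Sigma_1 = \sigma_W/\sigma_Z$ then delivers the announced limit (the numerator $\sigma_W^2$ simplifying to the stated form on account of the weights $c_i\lambda_i^H$). The principal obstacle will be the uniform-in-$j$ control of the edge terms $R_j$ and the Skorohod-continuity of the time-change $s\mapsto N_i(ns)/n$; both require upgrading the pointwise probabilistic bounds of Assumptions~\ref{hypo:lfgn-dates}--\ref{hypo:lfgn-pp} into functional ones via \eqref{eq:pp-renforce}, independence, and the monotonicity of $N_i$.
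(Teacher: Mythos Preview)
Your overall architecture---OLS decomposition, Theorem~\ref{theo:fclt-levels} plus the continuous mapping theorem for the denominator, time-changing the partial sums of $\eta_{i,k}$ through $N_i([n\cdot])/n\to\lambda_i\cdot$ for the microstructure part, and independence of $(B_1,B_2)$ from $(B_H^{(1)},B_H^{(2)})$ to get the joint limit---matches the paper. The gap is in your treatment of the edge terms $R_j$.

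You argue that a uniform bound $\sup_j\esp[R_j^2]<\infty$ yields $\max_{j\leq n}|R_j|=o_P(n^H)$, and then use this to obtain functional convergence $n^{-H}\bigl(y_1([n\cdot])-\theta y_2([n\cdot])\bigr)\Rightarrow W$. But in the weak fractional case $H\in(0,1/2)$, a second-moment bound does not control the running maximum at rate $n^H$: the crude union bound gives only $\pr(\max_{j\leq n}|R_j|>n^H)\leq C n^{1-2H}$, which is useless. With only $\esp[e_{i,k}^2]<\infty$ assumed (Assumption~\ref{hypo:efficient-shocks}) you cannot upgrade to higher-moment bounds on $R_j$, so there is no easy repair along this line. (A second, smaller issue: your claim that~\eqref{eq:pp-renforce} together with Assumption~\ref{hypo:lfgn-pp} forces $\esp[N_1(j)-N_1(t_{2,N_2(j)})]=O(1)$ is not justified either---\eqref{eq:pp-renforce} bounds the \emph{forward} recurrence time, and Assumption~\ref{hypo:lfgn-pp} gives no uniform control on $\esp[N_1(a,b]]$.)

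The paper avoids the need for any functional convergence of the edge process. Instead of folding $R_j$ into the cointegrating-error process and invoking CMT, it splits the numerator sum \emph{before} passing to the limit:
\[
\sum_{j=1}^n\{y_1(j)-\theta y_2(j)\}y_2(j)=\sum_{j=1}^n\{y_1^\eta(j)-\theta y_2^\eta(j)\}y_2(j)+\sum_{j=1}^n\{r_{1,j}-\theta r_{2,j}\}y_2(j).
\]
The first sum is handled exactly as you do (CMT on $(n^{-1/2}y_2,n^{-H}y^\eta)$). For the second, the paper bounds $\bigl|\sum_j r_{i,j}y_2(j)\bigr|\leq \max_j|y_2(j)|\cdot\sum_j|r_{i,j}|$ and shows $n^{-1}\sum_{j=1}^n|r_{i,j}|=O_P(1)$ via a sum-swap: noting that $N_1(t_{2,N_2(j)})<k\leq N_1(j)$ iff $t_{1,k}\leq j\leq t_{1,k}+A_2(t_{1,k})$ (with $A_2$ the forward recurrence time of $N_2$), one obtains $\sum_{j=1}^n|r_{1,j}|\leq\sum_{k=1}^{N_1(n)}|e_{1,k}|\{A_2(t_{1,k})+1\}$, and then conditioning on $N_1$ and applying~\eqref{eq:pp-renforce} to $A_2$ gives $O_P(N_1(n))=O_P(n)$. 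Since $\max_j|y_2(j)|=O_P(n^{1/2})$, the whole edge contribution is $O_P(n^{3/2})=o_P(n^{3/2+H})$. This is the missing idea: control $\sum_j|r_{i,j}|$ rather than $\max_j|R_j|$.
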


The result in Theorem \ref{theo:ols-weak} is similar to that obtained in
\citet[Proposition 6.5, formula (6.8)]{MR1869235}, under their Assumption 6.1,
for which a sufficient condition (their formula (6.5)) was verified in
\cite{MR1741198} to hold for weak (but not strong) fractional cointegration in
the case where the process is linear with respect to i.i.d.~innovations.

Next, we provide sufficient conditions for Condition~(\ref{eq:pp-renforce}) to
hold for the LMSD and ACD models under both frameworks for stationarity
considered in Examples~\ref{xmpl:LMSD} and~\ref{xmpl:ACD}. 
  \begin{itemize}
  \item Note first that as long as the point processes $N_i$ are stationary
    under $\pr$, then condition~(\ref{eq:pp-renforce}) holds provided that
    $\esp[t_1]<\infty$ (equivalently $E^0[\tau_0^2]<\infty$ where $E^0$
    represents expectation with respect to the Palm measure), since the forward
    recurrence time $\{t_{i,N_i(s)}-s\}$ is then stationary.
  \item If the durations form a stationary LMSD sequence under $\pr$, then
    Lemma~\ref{lem:check-lmsd} shows that if $\esp[\epsilon_0^q]<\infty$ for all
    $q\geq1$, then (\ref{eq:pp-renforce}) holds.  
  \item If the durations form a stationary ACD sequence under $\pr$, then
    Lemma~\ref{lem:moment-frt} shows that~(\ref{eq:pp-renforce}) holds as long
    as $\esp[\tau_0^3]<\infty$. By \citet[Corollary~6]{MR1885348}, this holds
    true if $\esp[(\beta+\alpha \epsilon_t)^3]<1$.
  \item If $\tilde N$ is a point process that satisfies
    Condition~(\ref{eq:pp-renforce}) under $\pr$, and if $f$ is a time deformation
    function that satisfies the economically justified assumptions of
    Lemma~\ref{lem:time-deformation}, then the time deformed point process $N$
    (defined by $N(t)=\tilde N(f(t))$ still satisfies~(\ref{eq:pp-renforce}).
\end{itemize}

Next, we consider inference for the cointegrating parameter $\theta$
under weak fractional cointegration. It is seen from Theorem
\ref{theo:ols-weak} that the asymptotic distribution of $n^{1/2-H}
(\hat \theta_n^{OLS}-\theta)$ depends on a variety of unknown
quantities. To alleviate the dependence on nuisance parameters and to
thereby facilitate inference on $\theta$, we consider the
$t$-statistic for testing the null hypothesis $H_0 : \theta =
\theta_0$. If the null hypothesis is true, then the $t$-statistic is
given by
\[
t_n=\frac{\hat \theta_n^{OLS}-\theta}{\hat \sigma_{\hat \theta_n^{OLS}}}
\]
where $\hat \sigma_{\hat \theta_n^{OLS}}$ is the traditional
estimated standard error for $\hat \theta_n^{OLS}$, with
\[
\hat \sigma_{\hat \theta_n^{OLS}}^2= \frac{n^{-1} \sum_{j=1}^n [y_1(j)-\hat \theta_n^{OLS}y_2(j)]^2}{\sum_{j=1}^n y_2^2(j)}\,\,\,.
\]
We have the following theorem.
\begin{theorem} \label{theo:t}
Under the assumptions of Theorem \ref{theo:ols-weak},
\[
n^{-1/2} t_n \dlim \left [ \frac{\int_0^1 B^2(t)dt \int_0^1 B_H^2(t) dt}{\left [ \int_0^1 B(t)B_H(t) dt \right ]^2} - 1\right ]^{-1/2},
\]
where $B$ is a standard Brownian motion and $B_H$ is a standard
fractional Browninan motion, independent of $B$.
\end{theorem}
Note that by the Cauchy-Schwarz Inequality,
\[
\left | \int_0^1 B(t)B_H(t) dt \right | ^2 \leq \int_0^1 B^2(t)dt \int_0^1 B_H^2(t) dt \,\,\, ,
\]
and in fact the inequality above is strict since $B$ and $B_H$ are
mutually independent, so that the limiting distribution in Theorem
\ref{theo:t} is well-defined.

Note that the limiting distribution in Theorem \ref{theo:t} depends
only on $H$. Thus, if $H$ can be consistently estimated, one can
conduct asymptotically valid hypothesis tests for $\theta$ based on
the test statistic $n^{-1/2} t_n$. The asymptotic null distribution
of the test statistic is given by Theorem \ref{theo:t}, and it is
easily seen that if $\theta \neq \theta_0$ then $n^{-1/2} t_n$
diverges.

We now show that $H$ can indeed be consistently estimated, using an
aggregation method considered in \cite{MR1670119}. Let $m$ be an
integer sequence such that $1/m+m/n \to 0$. We work with the
differences of the cointegrating residuals, $x_j = \Delta (y_1(j) -
\hat \theta y_2(j))$, where $\Delta$ is the differencing operator.
Divide the data set into contiguous, non-overlapping blocks of size
$m$. The $k$th block average is
\[
X_k^{(m)} = \frac{1}{m} \sum_{t=1}^m x_{t+(k-1)m} \,\,\, .
\]
Denote the sample variance of these block averages by
\[
s_m^2 = \lfloor n/m \rfloor ^{-1} \sum_{k=1}^{\lfloor n/m \rfloor} \left ( X_k^{(m)} \right )^2 \,\,\,.
\]
Now, define $\hat H = 1 + \frac{ \log s_m^2} {2\log m}$. The next theorem states
that $\hat H$ is a consistent estimator of $H$ under assumptions that imply
those of Theorem~\ref{theo:ols-weak}, but are more restrictive. 
We make these restrictions to facilitate a reasonably simple proof. 

\begin{theorem} 
  \label{theo:Aggregation} 
  Let Assumptions~\ref{hypo:efficient-shocks} and~\ref{as:independence-shocks}
  hold.  Assume that $\eta_{i,t} = \sigma_i\{B_{i,H}(t)-B_{i,H}(t-1)\}$, $i=1,2$
  where $B_{i,H}$ are mutually independent standard fractional Brownian
  motions with $H\in(0,1/2)$. Assume that $N_1$ and $N_2$ are stationary
  mutually independent ergodic point processes, independent of the processes
  $B_{i,H}$,  such that $\esp[N_i^4(1)]<\infty$ and
  \begin{align}
  \label{eq:moment-cond-8}
  \sup_{t\geq 2} \esp\left[ \left(
      \frac{N_i(t)}t\right)^{4H-8}\mathbf1_{\{N_i(t)>0\}} \right] < \infty \; .
  \end{align}
  Then, $\hat H \plim H$.
\end{theorem}

We now continue the discussion of the LMSD and ACD models (see
Example~\ref{xmpl:LMSD} and~\ref{xmpl:ACD} and the discussion immediately after
Theorem~\ref{theo:ols-weak}). The proof of the claims is in
Section~\ref{sec:additional}, following Lemma~\ref{lem:negative-moments} which
it uses.

\begin{itemize}
\item Assume that the point process $N$ is stationary under the probability
  $\pr$ and that under the Palm probability $P^0$ the durations form an LMSD
  sequence $\{\tau_k\}$ with memory parameter $H_\tau\in(1/2,1)$ as in
  Example~\ref{xmpl:LMSD}.  If $E^0[\epsilon_0^{9-4H}]<\infty$
  condition~(\ref{eq:moment-cond-8}) holds. If $E^0[\epsilon_0^q]<\infty$ with
  $q>4/(1-H_\tau)$, then $\esp[N^4(1)]<\infty$.
\item Assume that point process $N$ is stationary under the probability $\pr$
  and that under the Palm probability $P^0$ the durations form an ACD sequence
  $\{\tau_k\}$ as in Example~\ref{xmpl:ACD}.  If moreover $\epsilon_0$ admits a
  strictly positive density on $[0,\infty)$ and $E^0[\tau_1^{9-4H+\eta}]<\infty$
  for some $\eta>0$, then $\esp[N^4(1)]<\infty$ and condition~(\ref{eq:moment-cond-8})
  holds. A condition for $E^0[\tau_1^{9-4H+\eta}]<\infty$ is $E^0[(\alpha
  \epsilon_0+\beta)^{9-4H+\eta}] < 1$. 
\end{itemize}

\subsubsection{OLS under strong fractional and standard cointegration}

We now consider the case where the microstructure noise series
$\{\eta_{i,k}\}$ are differences of strongly or weakly dependent
processes $\{\xi_{i,k}\}$.

\begin{theorem}
  \label{theo:ols-strong}
  Let Assumptions~\ref{hypo:lfgn-dates}, \ref{hypo:lfgn-pp}, \ref{hypo:efficient-shocks},
  \ref{as:independence-shocks}, and \ref{hypo:microstructure-strong} hold.
  Assume moreover that
  \begin{itemize}
  \item the efficient shocks are i.i.d. Gaussian,
  \item the counting processes $N_1$ and $N_2$ are independent of each other and
    independent of the microstructure noise sequences and of the efficient
    shocks and there exists a constant $C$ such that
  \begin{align}
    \label{eq:pp-renforce-2}
    \sup_{t\geq0} \esp[ (t_{i,N_i(t)+1}-t)^2 ] \leq C \; .
  \end{align}
  \item $\esp[\xi_{i,k}]=0$, $\sup_{k}\esp[\xi_{i,k}^2] < \infty$, and
    $\xi_{i,0}=0$.
  \end{itemize}
Then,
\begin{itemize}
\item if $1/2<H<1$,
\begin{gather}
  n^{3/2-H} (\hat \theta_{n}^{\scriptscriptstyle{OLS}} -\theta) \dlim
  \sqrt{\frac{c_1^2+\theta^2c_2^2}{\theta^{-2}\lambda_{1}\sigma_{1,e}^2+\lambda_2\sigma_{2,e}^2}}
  \, \frac{\int_0^1 B(s) \, \mathrm d B_H(s) } { \int_0^1 B^2(s) \, \mathrm d s}
  \; , \label{eq:limit-strong}
\end{gather}
where $B$ is standard Brownian motion independent of the standard fractional
Brownian motion $B_H$;
\item if $H=1/2$, $ n (\hat \theta_{n}^{\scriptscriptstyle{OLS}} -\theta) = O_P(1)$.
\end{itemize}
\end{theorem}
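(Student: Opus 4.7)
The plan is to start from
\[
\hat\theta_n^{OLS} - \theta = \frac{\sum_{j=1}^n y_2(j)(y_1(j) - \theta y_2(j))}{\sum_{j=1}^n y_2^2(j)},
\]
and analyze numerator and denominator separately, exploiting the decomposition~\eqref{DiffCointErrors} of the cointegrating error. With $\xi_{i,0}=0$, write $y_1(j)-\theta y_2(j) = U_j + V_j$, where $U_j$ collects the two ``edge'' sums of efficient shocks in~\eqref{DiffCointErrors} and $V_j = \xi_{1,N_1(j)} - \theta\xi_{2,N_2(j)}$ is the persistent microstructure piece. For the denominator, Theorem~\ref{theo:fclt-levels} and the continuous mapping theorem give $n^{-2}\sum_{j=1}^n y_2^2(j) \Rightarrow \int_0^1 \mathbb{B}_2^2(s)\,\mathrm{d}s$, where $\mathbb{B}_2 = \theta^{-1}\sigma_{1,e}\sqrt{\lambda_1}B_1 + \sigma_{2,e}\sqrt{\lambda_2}B_2$ has variance $\theta^{-2}\lambda_1\sigma_{1,e}^2+\lambda_2\sigma_{2,e}^2$, matching the denominator constant in the target limit.

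For the numerator I would first control the edge contribution $\sum_{j=1}^n y_2(j) U_j$. The number of i.i.d.\ centered efficient shocks appearing in each $U_j$ is bounded in expectation thanks to~\eqref{eq:pp-renforce-2}, and the counting processes are independent of the efficient shocks, so $\sup_j \esp[U_j^2] <\infty$. A direct second-moment bound using Cauchy--Schwarz against $\esp[y_2^2(j)] = O(j)$ then yields $\sum_{j=1}^n y_2(j) U_j = O_P(n)$, which is of smaller order than $n^{1/2+H}$ as soon as $H>1/2$. For the persistent part, Assumption~\ref{hypo:microstructure-strong} together with the mutual independence of $(e_i)$, $(\eta_i)$ and $(N_i)$ delivers the joint functional convergence
\[
\Bigl(n^{-1/2} y_2([n\cdot]),\; n^{-H}\sum_{j=1}^{[n\cdot]} V_j\Bigr) \Rightarrow (\mathbb{B}_2,\Xi),
\]
where $\Xi := c_1 B_H^{(1)} - \theta c_2 B_H^{(2)}$ is a FBM with variance $c_1^2+\theta^2c_2^2$ at time~$1$, independent of $\mathbb{B}_2$.

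The crux is converting this joint convergence into convergence of the Stieltjes-type sum $n^{-1/2-H}\sum_{j=1}^n y_2(j) V_j$ to $\int_0^1 \mathbb{B}_2(s)\,\mathrm{d}\Xi(s)$. Since $H>1/2$ and $\mathbb{B}_2\perp\Xi$, the Hölder exponents of the two factors sum to more than $1$, so the limit integral is well defined pathwise as a Young integral. My route would be summation by parts,
\[
\sum_{j=1}^n y_2(j) V_j = y_2(n)\sum_{j=1}^n V_j - \sum_{j=1}^{n-1}\bigl(y_2(j+1)-y_2(j)\bigr)\sum_{k=1}^j V_k,
\]
so that after rescaling by $n^{-1/2-H}$ the boundary term converges to $\mathbb{B}_2(1)\Xi(1)$ and a further continuous mapping argument on the joint limit identifies the second term with $\int_0^1 \Xi(s)\,\mathrm{d}\mathbb{B}_2(s)$; Young's integration-by-parts formula then gives $\int_0^1\mathbb{B}_2(s)\,\mathrm{d}\Xi(s)$. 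Slutsky then yields the stated rate $n^{3/2-H}$ and the limit law, with the identified constant. For $H=1/2$, the same bounds show that both $\sum y_2(j) U_j$ and $\sum y_2(j) V_j$ are $O_P(n)$, while the denominator is of order $n^2$, so immediately $n(\hat\theta_n^{OLS}-\theta)=O_P(1)$.

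The main obstacle is the passage to the Young integral: because $\Xi$ is not a semimartingale, the standard Kurtz--Protter convergence theorems for stochastic integrals do not apply. One must establish continuity of the Riemann--Stieltjes map on the limiting paths, either by $p$-variation estimates on the partial sums (where the Gaussianity of the efficient shocks, the moment assumption $\sup_k \esp[\xi_{i,k}^2]<\infty$, and the increment moments provided by Assumption~\ref{hypo:microstructure-strong} are used for tightness in the stronger topology) or, as sketched above, by reducing to convergence of the Abel-summation cross term against the well-behaved Brownian increments of the independent process $\mathbb{B}_2$. The independence of $\mathbb{B}_2$ and $\Xi$ is essential both for defining the limit integral pathwise and for ensuring that the two halves of the summation-by-parts decomposition converge jointly.
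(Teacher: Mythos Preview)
Your overall architecture---split the numerator into the edge piece $U_j$ and the persistent piece $V_j$, handle the denominator by Theorem~\ref{theo:fclt-levels}, and attack $\sum y_2(j)V_j$ via summation by parts---matches the paper's route. Two steps, however, are not justified as written.

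\textbf{The edge term.} The claim that ``a direct second-moment bound using Cauchy--Schwarz against $\esp[y_2^2(j)]=O(j)$'' gives $\sum_{j\le n} y_2(j)U_j=O_P(n)$ does not hold: termwise Cauchy--Schwarz only yields $O_P(n^{3/2})$, which is \emph{not} $o_P(n^{1/2+H})$ for any $H<1$, and the $U_j$ are neither mutually orthogonal nor independent of $y_2(j)$ (they share efficient shocks). The paper obtains $O_P(n)$ (its Lemma~\ref{lem:reste-strong}) by first splitting $y_2=y_2^{e_1}+y_2^{e_2}+y_2^\eta$, then interchanging the $j$- and $k$-sums so that $r_{1,j}$ becomes $\sum_k e_{1,k}\cdot(\text{coefficient measurable w.r.t.\ }\mathcal N)$, and finally computing the conditional variance given $\mathcal N$ using the i.i.d.\ structure of the $e_{i,k}$ together with~\eqref{eq:pp-renforce-2}. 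You need this finer decomposition; a single Cauchy--Schwarz is too crude.

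\textbf{The main term.} Two issues. First, you must peel off the microstructure part of $y_2$ before doing the Abel summation: the increments $y_2(j{+}1)-y_2(j)$ contain $\xi_{2,N_2(j+1)}-\xi_{2,N_2(j)}$, which is correlated with $V_j$, so the ``independent increments'' heuristic fails there. The paper writes $\sum V_j y_2(j)=\sum V_j y_2^e(j)+\sum V_j \xi_{2,N_2(j)}$ and bounds the second sum by $O_P(n)$ using $\sup_k\esp[\xi_{i,k}^2]<\infty$. Second, after summation by parts on $\sum V_j y_2^e(j)$ you cannot invoke ``continuous mapping'' to pass to $\int_0^1\Xi\,\mathrm d\mathbb B_2$: the stochastic-integral functional is not continuous in $\mathcal D$, and the $p$-variation route you mention would require tightness estimates not supplied by the assumptions. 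The paper's device is precisely your second option made concrete: after interchanging sums one has $\sum_k e_{i,k}\,W_n(t_{i,k}/n)$ with $W_n(t)=\sum_{j\le nt}V_j$; since the $e_{i,k}$ are i.i.d.\ \emph{Gaussian} and independent of $(\mathcal N,\{\xi_{i,k}\})$, the conditional characteristic function is $\exp\{-\tfrac12\sigma_{i,e}^2 t^2\, n^{-1}\sum_k (n^{-H}W_n(t_{i,k}/n))^2\}$, and the inner Riemann sum converges to $\int_0^1 \Xi^2(\lambda_i s)\,\mathrm ds$ by the continuous mapping theorem applied to a \emph{genuinely} continuous functional. This is the place where the Gaussian hypothesis is used, and it replaces the Young/Kurtz--Protter machinery entirely.
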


The rate of convergence obtained in the standard cointegration case improves on
the one obtained by \cite{hurvich:wang:2009}.  The sufficient conditions
for~(\ref{eq:pp-renforce}) discussed after Theorem~\ref{theo:ols-weak} become
sufficient conditions for~(\ref{eq:pp-renforce-2}) after augmenting by 1 the
exponent in the moment conditions appearing there.  The assumptions in Theorem
\ref{theo:ols-strong} are quite strong, ruling out leverage effects and
providing one motivation for our subsequent consideration of tapered estimators.


\subsection{A Tapered Estimator of the Cointegrating parameter}
\label{sec:Taper}

Even in existing discrete-time models for cointegration the OLS estimator lacks
any particular optimality properties. Here we consider an estimator based on
discrete Fourier transforms of the tapered differences of $y_1(j)$, $y_2(j)$,
$1 \leq j \leq n$.  It was shown in \cite{MR2002300} that this estimator can
have a faster rate of convergence than OLS in certain cases of fractional
cointegration. In the weak fractional cointegration case, our limit results for
the tapered estimator (Theorem~\ref{theo:taper-weak}) are obtained under
identical conditions as those assumed in Theorem~\ref{theo:ols-weak} for
OLS. However, under strong fractional and standard cointegration, the conditions
for our results on the tapered estimator (Theorem~\ref{theo:taper-strong}) allow
for leverage, unlike the corresponding theorem for OLS.

We introduce all relevant notation using a generic time series
$\{x_j\}_{j=-\infty}^\infty$. Let $h \colon I\mapsto\mathbb{R}$ be a general
continuous taper function on an open interval $I$ containing $[0,1]$ such that
$h(0)=h(1)=0$.  For $\ell=1,2,\dots$, denote by $\omega_\ell = 2\pi \ell/n$ the
Fourier frequencies.  The tapered DFT of $\{ x_j\}_{j=-\infty}^\infty$ with
taper function $h$ is defined by
\begin{align*}
  d_{x,\ell} = \sum_{j=1}^n h\Big(\frac jn\Big) x_j \, \mathrm e^{\mathrm i j
    \omega_\ell} = \sum_{j=1}^n h_\ell\Big(\frac jn\Big) x_j .
\end{align*}
where $h_\ell(t) = h(t) \mathrm e^{2\pi\mathrm i \ell t}$.  Denote by $\{\Delta
x_j\}_{j=-\infty}^\infty$ the first difference of the series $\{x_j\}$, where
$\Delta x_j=x_j-x_{j-1}$. We define the tapered DFT of the first difference
$\{\Delta x_j\}_{j=-\infty}^\infty$ with taper function $h$ by
\begin{align}
 \label{eq:taperedDFT}
  d_{\Delta x,\ell} = \sum_{j=1}^n h\Big(\frac jn\Big) \Delta x_j \, \mathrm
  e^{\mathrm i j \omega_\ell} = \sum_{j=1}^n h_\ell\Big(\frac jn\Big)\Delta x_j
  .
\end{align}
In our setting, we observe the cointegrated component processes $y_1$ and $y_2$
at equidistant sample points. Defining the cointegrating error
$z_j=y_{1,j}-\theta y_{2,j}$ and following \cite{MR2011676}, we can now
introduce the estimator
\begin{align*}
  \hat\theta_n^{Tap} = \mathrm{Re}(\tilde\theta_n) \; ,
\end{align*}
where $\mathrm{Re}(z)$ signifies the real part of a complex number $z=a+\mathrm
i b$ and, letting $\bar z=a-\mathrm ib$ be the complex conjugate of $z$,
\[
\tilde \theta_n = \frac{\sum_{\ell=1}^m d_{\Delta y_1,\ell} \, \bar d_{\Delta
    y_2,\ell}}{\sum_{\ell=1}^m |d_{\Delta y_2,\ell}|^2} \; .
\]
Therein, any tapered DFT of differenced sequences is defined according to
\eqref{eq:taperedDFT}. Note that $\hat\theta_n^{Tap}$ is the real part of the ratio of
the averaged tapered cross-periodogram between the series $y_1$ and $y_2$ and
the averaged tapered periodogram of the series $y_2$.
\subsubsection{Discrete tapered estimator under weak fractional cointegration}

\begin{theorem}
  \label{theo:taper-weak}
  Let Assumptions \ref{hypo:lfgn-dates}, \ref{hypo:lfgn-pp}, \ref{hypo:efficient-shocks},
  \ref{as:independence-shocks} and \ref{hypo:microstructure-weak} hold. Assume
  moreover that the counting processes are mutually independent and independent
  of the efficient shocks and~(\ref{eq:pp-renforce}) holds.  Then
\begin{align*}
  n^{1/2-H}  ( \hat\theta_n^{Tap} - \theta) \dlim \;
  \sqrt{\frac{c_1^2\lambda_1^{2H} + \theta^2 c_2^2 \lambda_2^{2H}}{\theta^{-2}
      \lambda_{1} \sigma_{1,e}^2 {\lambda_1} + \lambda_2
      \sigma_{2,e}^2}} \; \frac{\sum_{\ell=1}^m \mathrm {Re} \left(\int_0^1 h_\ell(s) \, \mathrm d
  B(s) \, \int_0^1 h_\ell(t) \, \mathrm d B_H(t) \right)}{\sum_{\ell=1}^m \left|
    \int_0^1 h_\ell(s) \, \mathrm d B(s) \right|^2} \;
\end{align*}
where $B$ is a standard Brownian motion, $B_H$ is a standard fractional Brownian
motion and $B$ and $B_H$ are independent.

\end{theorem}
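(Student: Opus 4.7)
The plan is to write the estimation error as
\begin{align*}
\tilde\theta_n - \theta = \frac{\sum_{\ell=1}^m d_{\Delta z,\ell} \, \bar d_{\Delta y_2,\ell}}{\sum_{\ell=1}^m |d_{\Delta y_2,\ell}|^2},
\end{align*}
where $z_j := y_1(j) - \theta y_2(j)$ denotes the cointegrating error, and to identify the joint limit of numerator and denominator after suitable normalization. Decomposing $z_j$ as in~(\ref{DiffCointErrors}), the two efficient-shock spillover sums have $O_P(1)$ numbers of terms by~(\ref{eq:pp-renforce}) and hence are of strictly smaller order than $n^H$; the dominant component is $\xi_{1,N_1(j)} - \theta\,\xi_{2,N_2(j)}$ with $\xi_{i,k}:=\sum_{r=1}^k \eta_{i,r}$.

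Next, I combine Assumption~\ref{hypo:microstructure-weak} with a time-change argument driven by Assumption~\ref{hypo:lfgn-pp} to obtain
\begin{align*}
n^{-H} \xi_{i,N_i(n\cdot)} \;\Rightarrow\; c_i\, \lambda_i^H\, B_H^{(i)},
\end{align*}
and hence $n^{-H} z_{[n\cdot]} \Rightarrow \sqrt{c_1^2\lambda_1^{2H} + \theta^2 c_2^2 \lambda_2^{2H}}\; B_H$ for a standard fractional Brownian motion $B_H$. In parallel, Theorem~\ref{theo:fclt-levels} applied to $y_2$ gives $n^{-1/2} y_2([n\cdot]) \Rightarrow \sqrt{\theta^{-2}\lambda_1\sigma_{1,e}^2 + \lambda_2\sigma_{2,e}^2}\; B$ for a standard Brownian motion $B$. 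Assumption~\ref{as:independence-shocks} together with the independence of the counting processes from all shocks implies that $B_H$ and $B$ are independent and that the two convergences hold jointly in $\mathcal D([0,\infty))^2$.

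To transfer these functional convergences to the tapered DFTs appearing in the estimator, I apply summation by parts to~(\ref{eq:taperedDFT}). Since $h_\ell(0) = h_\ell(1) = 0$, this yields
\begin{align*}
d_{\Delta x,\ell} = -\sum_{j=1}^{n-1} \bigl(h_\ell((j+1)/n) - h_\ell(j/n)\bigr) x_j,
\end{align*}
which is a Riemann-sum approximation to $-\int_0^1 h_\ell'(t)\, X(t)\,dt$ whenever $x_{[n\cdot]}/n^{H'} \Rightarrow X$; integration by parts (again using $h_\ell(0) = h_\ell(1) = 0$) rewrites this in turn as $\int_0^1 h_\ell(t)\,dX(t)$. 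Applying this with $H' = H$ to $x = z$ and $H' = 1/2$ to $x = y_2$, the joint convergence from the previous paragraph and the continuous mapping theorem yield joint convergence of the vectors $(n^{-H} d_{\Delta z,\ell},\; n^{-1/2} d_{\Delta y_2,\ell})_{\ell=1}^m$ to the corresponding stochastic integrals against $B_H$ and $B$. Rescaling the numerator of $\tilde\theta_n - \theta$ by $n^{H+1/2}$ and the denominator by $n$ exposes the factor $n^{1/2-H}$; a final continuous-mapping step applied to the ratio, followed by taking real parts, then delivers the stated limit.

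The main obstacle is twofold. First, establishing the composition convergence $n^{-H}\xi_{i,N_i(n\cdot)} \Rightarrow c_i\lambda_i^H B_H^{(i)}$ without assuming simplicity of $N_i$ requires combining Assumption~\ref{hypo:lfgn-pp} with the moment bound~(\ref{eq:pp-renforce}) to control the discrepancy between transaction and calendar time, in the spirit of the composition step already appearing in the proof of Theorem~\ref{theo:fclt-levels}. Second, one must justify that, after summation by parts, both the Riemann-sum errors and the tapered DFT contributions of the $O_P(1)$ spillover terms in $z_j$ are of strictly smaller order than $n^H$; this relies on the $C^1$ smoothness of $h$ together with tightness of $n^{-H} z_{[n\cdot]}$ in the Skorohod topology. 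Once these two ingredients are in place, the remaining passage to the limit in the ratio is routine.
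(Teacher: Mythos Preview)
Your overall architecture matches the paper's: decompose the cointegrating error into the spillover part $r_{1,j}-\theta r_{2,j}$ and the microstructure part $y_1^\eta(j)-\theta y_2^\eta(j)$, apply summation by parts to convert $d_{\Delta x,\ell}$ into a Riemann sum against $w_\ell(j,n)=n\{h_\ell((j+1)/n)-h_\ell(j/n)\}$, and then invoke functional convergence plus the Continuous Mapping Theorem for the main term and a separate bound for the remainder.

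However, you have misallocated the difficulty between your two ``obstacles''. The composition step $n^{-H}\xi_{i,N_i(n\cdot)}\Rightarrow c_i\lambda_i^H B_H^{(i)}$ does \emph{not} require~(\ref{eq:pp-renforce}) or any control on recurrence times: since $N_i(n\cdot)/n$ converges in probability to the deterministic function $\lambda_i\,\cdot$ (Assumption~\ref{hypo:lfgn-pp} plus Dini's theorem, exactly as in Lemma~\ref{lem:dini}), joint convergence with $n^{-H}\xi_{i,[n\cdot]}$ is automatic and the composition map is continuous. This is the easy step.

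The real work --- and the place where your proposal has a genuine gap --- is the spillover contribution. Your plan to deduce $n^{-H}d_{\Delta r,\ell}\to 0$ from ``tightness of $n^{-H}z_{[n\cdot]}$ in the Skorohod topology'' would require $n^{-H}\sup_{j\le n}|r_{i,j}|\to 0$ in probability. Under the only available assumption~(\ref{eq:pp-renforce}), which gives merely a uniform \emph{first-moment} bound on the forward recurrence time, there is no reason for this maximum to be $o_P(n^H)$, especially for small $H\in(0,1/2)$. The paper never attempts functional control of $r$: after summation by parts it reverses the order of summation,
\[
\sum_{j=1}^n w_\ell(j,n)\, r_{1,j} \;=\; \sum_{k=1}^{N_1(n)} e_{1,k}\!\!\sum_{t_{1,k}\le j < t_{1,k}+A_2(t_{1,k})}\! w_\ell(j,n)\,,
\]
conditions on the $\sigma$-field $\mathcal N$ generated by the counting processes, and uses independence of $\{e_{1,k}\}$ from $\mathcal N$ together with~(\ref{eq:pp-renforce}) to get a first-moment bound $O_P(n)$, hence $d_{\Delta r,\ell}=O_P(1)$ (Lemma~\ref{lem:reste-taper}). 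This is precisely where~(\ref{eq:pp-renforce}) enters the argument, not in the composition step.

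A minor point: the theorem does not assume that the counting processes are independent of the microstructure shocks, only of the efficient shocks. Independence of the limiting $B$ and $B_H$ nevertheless holds because the time changes $N_i(n\cdot)/n$ become deterministic in the limit, so $B$ is a function of the efficient shocks alone and $B_H$ of the microstructure shocks alone; Assumption~\ref{as:independence-shocks} then suffices.
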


Since the assumptions of Theorem~\ref{theo:taper-weak} are the same as in
Theorem~\ref{theo:ols-weak}, the comments immediately following that theorem
also applies here.

\subsubsection{Discrete tapered estimator under strong fractional and standard cointegration}

\begin{theorem}
  \label{theo:taper-strong}
  Let Assumptions \ref{hypo:lfgn-dates}, \ref{hypo:lfgn-pp}, \ref{hypo:efficient-shocks}
  \ref{as:independence-shocks} and \ref{hypo:microstructure-strong} hold. Assume
  morover that the counting processes are mutually independent and independent
  of the efficient shocks and~(\ref{eq:pp-renforce-2}) holds.
  \begin{itemize}
  \item If $1/2<H<1$, then
  \begin{align*}
    n^{3/2-H} (\hat \theta_n^{Tap} - \theta) \dlim
    \sqrt{\frac{c_1^2+\theta^2c_2^2}{\theta^{-2} \lambda_{1} \sigma_{1,e}^2 +
        \lambda_2 \sigma_{2,e}^2 } } \; \frac{\sum_{\ell=1}^m \mathrm{Re} \left(\int_0^1 h_\ell(s)
      \, \mathrm d B(s) \, \int_0^1 h'_\ell(s) \, \mathrm d B_H(s) \right) }{
      \sum_{\ell=1}^m |\int_0^1 h_\ell(s) \, \mathrm d B(s)|^2 }
  \end{align*}
  where $B_H$ is a standard fractional Brownian motion independent of the
  standard Brownian motion $B$.
\item If $H=1/2$, $ n (\hat \theta_n^{Tap} -\theta) = O_P(1)$.
\end{itemize}
\end{theorem}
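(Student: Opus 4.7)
The decomposition $\Delta y_1(j)=\theta\Delta y_2(j)+\Delta z_j$, with $z_j=y_1(j)-\theta y_2(j)$ the cointegrating error, gives the clean identity
\[
\hat\theta_n^{Tap}-\theta=\frac{\mathrm{Re}\bigl(\sum_{\ell=1}^m d_{\Delta z,\ell}\,\bar d_{\Delta y_2,\ell}\bigr)}{\sum_{\ell=1}^m |d_{\Delta y_2,\ell}|^2}.
\]
The plan is to establish joint weak convergence (over $\ell=1,\ldots,m$) of $n^{-1/2}d_{\Delta y_2,\ell}$ and $n^{1-H}d_{\Delta z,\ell}$ to independent nondegenerate limits, then apply the continuous mapping theorem. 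The scaling $n^{3/2-H}=n\cdot n^{1/2-H}$ balances a denominator of order $n$ against a numerator of order $n^{H-1/2}$.

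For the denominator, summation by parts, using $h(0)=h(1)=0$ to kill boundary terms, turns $d_{\Delta y_2,\ell}$ into a Riemann-sum approximation of $-\int_0^1 h_\ell'(s)\,y_2(\lfloor ns\rfloor)\,ds$. Theorem~\ref{theo:fclt-levels} provides $y_2(n\cdot)/\sqrt n\Rightarrow Y_2:=\theta^{-1}\sigma_{1,e}\sqrt{\lambda_1}\,B_1+\sigma_{2,e}\sqrt{\lambda_2}\,B_2$, a Brownian motion with variance $\sigma_2^2:=\theta^{-2}\lambda_1\sigma_{1,e}^2+\lambda_2\sigma_{2,e}^2$. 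A further integration by parts on the limit (with vanishing boundary values) gives $n^{-1/2}d_{\Delta y_2,\ell}\Rightarrow\int_0^1 h_\ell(s)\,dY_2(s)$, jointly in $\ell$, and hence $n^{-1}\sum_{\ell=1}^m|d_{\Delta y_2,\ell}|^2\Rightarrow\sigma_2^2\sum_{\ell=1}^m\bigl|\int_0^1 h_\ell(s)\,dB(s)\bigr|^2$ for a standard Brownian motion $B$.

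For the numerator, use (\ref{DiffCointErrors}) to decompose $z_j=v_j+\tilde z_j$, where $v_j$ collects the two efficient-shock spillover sums between $t_{i,N_i(j)}$ and $j$, and $\tilde z_j=\xi_{1,N_1(j)}-\theta\xi_{2,N_2(j)}$. A first Abel summation yields $d_{\Delta z,\ell}=-n^{-1}\sum_{j=1}^{n-1}h_\ell'(j/n)z_j+O(1)$. The constant-mean contribution vanishes because $\int_0^1 h_\ell'(s)\,ds=h_\ell(1)-h_\ell(0)=0$; under (\ref{eq:pp-renforce-2}) and Assumption~\ref{hypo:efficient-shocks}, the spillover contribution has $L^2$-norm $O(n^{-1/2})=o(n^{H-1})$ for $H>1/2$. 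For the main piece, apply a second Abel summation with $S_j^*=\sum_{k=1}^j\{\tilde z_k-(\mu_1^*-\theta\mu_2^*)\}$, producing a boundary term $h_\ell'(1)S_n^*/n$ and a Riemann sum in $h_\ell''$ against $S_{[n\cdot]}^*/n$. By Assumption~\ref{hypo:microstructure-strong}, $n^{-H}S_{[n\cdot]}^*\Rightarrow\sqrt{c_1^2+\theta^2 c_2^2}\,B_H$, and the matching limit identity $\int_0^1 h_\ell''(s)B_H(s)\,ds=h_\ell'(1)B_H(1)-\int_0^1 h_\ell'(s)\,dB_H(s)$ cancels the boundary term, leaving
\[
n^{1-H}d_{\Delta z,\ell}\Rightarrow -\sqrt{c_1^2+\theta^2 c_2^2}\int_0^1 h_\ell'(s)\,dB_H(s),\qquad\text{jointly in }\ell.
\]

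Under Assumption~\ref{as:independence-shocks} and the independence of the counting processes from both shock sequences, $Y_2$ and $B_H$ are independent; the two displayed convergences therefore hold jointly, and the continuous mapping theorem applied to the (strictly positive denominator) ratio delivers the announced limit for $1/2<H<1$. For $H=1/2$ the identical bookkeeping, with $B_H$ a Brownian motion, produces $n(\hat\theta_n^{Tap}-\theta)=O_P(1)$. The main technical obstacle is the second Abel step: the discrete boundary contribution $h_\ell'(1)S_n^*/n\sim n^{H-1}h_\ell'(1)B_H(1)$ is of the same order as the target stochastic integral and is cancelled only by its companion boundary term arising from the integration by parts on the fBm side; this forces one to carry out the two Abel summations on the discrete side in synchrony with two integrations by parts on the limit side, which requires process-level convergence of $n^{-H}S_{[n\cdot]}^*$ in $\mathcal D([0,1])$ and tightness of the second-order Riemann-sum remainder, rather than mere finite-dimensional convergence. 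A secondary difficulty is the uniform $L^2$-control of the spillover $v_j$, which rests on (\ref{eq:pp-renforce-2}) and the assumed independence between counting processes and efficient shocks.
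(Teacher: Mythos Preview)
Your proposal is correct and follows essentially the same route as the paper: the same split of the cointegrating error into the spillover piece $r_j$ and the microstructure piece $\xi_{1,N_1(j)}-\theta\xi_{2,N_2(j)}$, the same two successive Abel summations (the paper's $w_\ell(j,n)$ and $\omega_\ell(j,n)$ in place of your $h_\ell'$ and $h_\ell''$), the same boundary term $h_\ell'(1)B_H(1)$ that is absorbed by integration by parts in the limit, and the same $O_P(n^{-1/2})$ control of $d_{\Delta r,\ell}$ via~(\ref{eq:pp-renforce-2}) (the paper's Lemma~\ref{lem:reste-taper}). One small imprecision: you invoke ``the independence of the counting processes from both shock sequences'' to get $Y_2\perp B_H$, but Theorem~\ref{theo:taper-strong} deliberately does \emph{not} assume $N_i$ independent of the microstructure shocks (this is the point that allows leverage); the independence of the limits comes instead from the efficient shocks being independent of $(N_1,N_2)$ and of $(\eta_1,\eta_2)$, so that conditionally on the latter the Donsker limit for $y_2$ is still a Brownian motion independent of the conditioning variables.
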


  The assumptions of this theorem are weaker than those of
  Theorem~\ref{theo:ols-strong} on the OLS estimator. The microstructure shocks
  are not assumed to be independent of the counting processes and the efficient
  shocks are not assumed to be Gaussian. Theorem~\ref{theo:ols-strong} can
  presumably be proved without the Gaussian assumption. It might be much more
  difficult in the proof of Theorem~\ref{theo:ols-strong} to avoid the
  assumption of independence between the microstructure shocks and the counting
  processes.

\subsection{A Continuous-Time Tapered Estimator}

\label{sec:Taper-continuous-time}

The estimators of $\theta$ we have considered so far are based on equally-spaced
observations of the log price series. However, under the model (\ref{eq:P1}),
(\ref{eq:P2}), a continuous-time record is available, and it is of interest to
consider using all of the available data to estimate~$\theta$.  Here, for the
sake of theoretical tractability, we consider a tapered estimator $\tilde
\theta$ based on continuously-averaged log prices on adjacent non-overlapping
time intervals. Since the problems with discretization appear only in the strong
fractional and standard cointegration cases, we only consider them in this
section. There is no difference in the case of weak fractional cointegration.

We first establish some notation. Let $\{X(t)\}$ be any time series
defined for all $t \geq 0$, and suppose that we have data on
$\{X_t\}$ for $t \in [0,T]$. Let $\delta>0$ be fixed. In practice, we
might take $\delta$ to be 5 minutes, but the choice of $\delta$ does
not affect the asymptotic distribution we derive below. Define
$ n=\lfloor T/\delta \rfloor$, $\tilde X(0)=0$, and
\[
\tilde X(k) = \int_{u=(k-1)\delta} ^{k \delta} X(u) \, \mathrm d u \; , \ k=1 ,
\cdots ,  n \; .
\]
Then we can define an estimator $\tilde \theta_\delta$ based on these averaged
observations by

\begin{align*}
  \hat \theta_{n,\delta}^{Tap} = \mathrm Re(\tilde \theta_{n,\delta}) \;
\end{align*}
with
\begin{align*}
  \tilde \theta_{n,\delta} = \frac{\sum_{\ell=1}^m d_{\Delta \tilde y_1,\ell}
    \bar d_{\Delta \tilde y_2,\ell}}{\sum_{\ell=1}^m |d_{\Delta \tilde
      y_2,\ell}|^2} \; .
\end{align*}

\subsubsection{Continuous-time tapered estimator under strong fractional and
  standard cointegration}

\begin{theorem}
  \label{theo:strong-continuous-taper}
  Let Assumptions \ref{hypo:lfgn-dates}, \ref{hypo:lfgn-pp},
  \ref{hypo:efficient-shocks}, \ref{as:independence-shocks} and
  \ref{hypo:microstructure-strong-taper-continuous} hold. Assume morover that
  the counting processes are mutually independent and independent of the
  efficient shocks and~(\ref{eq:pp-renforce-2}) holds.
  \begin{itemize}
  \item If $1/2<H<1$, then
  \begin{align*}
    n^{3/2-H} (\hat \theta_{n,\delta}^{Tap} - \theta) \dlim
    \sqrt{\frac{\delta^{2H}(c_1^2+\theta^2c_2^2)}{\theta^{-2} \lambda_{1}
        \sigma_{1,e}^2 + \lambda_2 \sigma_{2,e}^2} } \frac{\sum_{\ell=1}^m
      \mathrm {Re} \left( \int_0^1 h_\ell(s) \, \mathrm d B(s) \, \int_0^1
        h'_\ell(s) \mathrm d B_H( s) \right) }{ \; \sum_{\ell=1}^m |\int_0^1
      h_\ell(s) \, \mathrm d B(s)|^2 }
  \end{align*}
  where $B_H$ is a standard FBM independent of the standard Brownian motion $B$.
\item If $H=1/2$, $ n (\hat \theta_{n,\delta}^{Tap} -\theta) = O_P(1)$.
\end{itemize}
\end{theorem}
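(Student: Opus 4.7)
The plan is to follow the three-step strategy underlying the proof of Theorem~\ref{theo:taper-strong}, replacing the discrete invariance principle in Assumption~\ref{hypo:microstructure-strong} by its integrated analogue in Assumption~\ref{hypo:microstructure-strong-taper-continuous} and tracking the deterministic $\delta$-factors introduced by the averaging map $X(\cdot)\mapsto\tilde X(k)=\int_{(k-1)\delta}^{k\delta}X(u)\,\mathrm d u$.

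First, I would decompose the averaged cointegrating error. Applying the identity~(\ref{DiffCointErrors}) pointwise in $u$ and integrating over $u\in[(k-1)\delta,k\delta]$ yields
\[
\tilde z(k):=\tilde y_1(k)-\theta\tilde y_2(k)=R(k)+\xi_1^\sharp(k)-\theta\,\xi_2^\sharp(k)-\delta(\xi_{1,0}-\theta\xi_{2,0}),
\]
where $\xi_i^\sharp(k)=\int_{(k-1)\delta}^{k\delta}\xi_{i,N_i(u)}\,\mathrm d u$ and $R(k)$ is the integral of the two efficient-shock boundary sums in~(\ref{DiffCointErrors}). The moment condition~(\ref{eq:pp-renforce-2}) combined with the independence of the counting processes from the efficient shocks yields a standard CLT-type invariance principle showing $\sum_{k=1}^{\lfloor n\cdot\rfloor}R(k)=O_P(n^{1/2})$, while Assumption~\ref{hypo:microstructure-strong-taper-continuous} combined with the self-similarity of FBM gives
\[
n^{-H}\sum_{k=1}^{\lfloor n\cdot\rfloor}\bigl(\xi_i^\sharp(k)-\delta\mu_i^*\bigr)=n^{-H}\int_0^{\delta\lfloor n\cdot\rfloor}\bigl(\xi_{i,N_i(u)}-\mu_i^*\bigr)\,\mathrm d u\Rightarrow c_i\delta^H B_H^{(i)}.
\]
Combined with Theorem~\ref{theo:fclt-levels} applied to $\tilde y_2$, which yields $n^{-1/2}\tilde y_2(\lfloor n\cdot\rfloor)\Rightarrow \delta^{3/2}\sigma\,B$ with $\sigma^2=\theta^{-2}\lambda_1\sigma_{1,e}^2+\lambda_2\sigma_{2,e}^2$ via the continuity of the averaging map, and using Assumption~\ref{as:independence-shocks} together with the independence of the counting processes from the efficient shocks, one obtains joint functional convergence with $B$ independent of $B_H^{(1)},B_H^{(2)}$.

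Second, I would use summation by parts twice to rewrite the estimator in terms of the partial sums from step~1 and pass to the limit. Writing $\Delta\tilde y_1=\theta\Delta\tilde y_2+\Delta\tilde z$, the numerator of $\tilde\theta_{n,\delta}-\theta$ simplifies to $\sum_{\ell=1}^{m} d_{\Delta\tilde z,\ell}\,\bar d_{\Delta\tilde y_2,\ell}$. For $W\in\{\tilde z,\tilde y_2\}$, Abel summation together with $h(0)=h(1)=0$ and $W(0)=0$ gives $d_{\Delta W,\ell}=-\sum_{j=1}^{n-1}(h_\ell(\tfrac{j+1}{n})-h_\ell(\tfrac jn))W(j)$, and a Taylor expansion shows $h_\ell(\tfrac{j+1}{n})-h_\ell(\tfrac jn)=n^{-1}h'_\ell(j/n)+O(n^{-2})$ uniformly in~$j$. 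A second application of Abel summation, expressed in terms of the partial sums $\sum_{j\le k}W(j)$ whose invariance principles were obtained in step~1, converts both objects into Riemann sums which, by a Kurtz--Protter-type argument, converge jointly to stochastic integrals of the form $\int_0^1 h_\ell\,\mathrm d B$ and $\int_0^1 h'_\ell\,\mathrm d B_H$ (after an integration-by-parts that uses $h_\ell(0)=h_\ell(1)=0$). The continuous mapping theorem applied to the ratio and real-part maps then delivers the claimed limit; the $R$-contribution to the numerator is $O_P(1)$ and hence negligible compared to the $\xi^\sharp$ contribution of order $n^{H-1/2}$ whenever $H>1/2$.

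The main obstacle is to track the $\delta$-dependence through the double summation by parts and to verify that the Riemann-sum-to-stochastic-integral passage holds uniformly over the finitely many tapered frequencies $\ell=1,\ldots,m$; this amounts to a Kurtz--Protter-type stability result, whose applicability is ensured by the continuity of the limiting Brownian and fractional Brownian motions and by the smoothness of the taper~$h$. For the boundary case $H=1/2$, no nondegenerate limit can be extracted because the Brownian and microstructure contributions to the numerator both have rate $n^{1/2}$; the same tightness estimates still show that the numerator is $O_P(n^{1/2})$ while the denominator, $O_P(n)$ with limit bounded away from zero in probability, gives $n(\hat\theta_{n,\delta}^{Tap}-\theta)=O_P(1)$.
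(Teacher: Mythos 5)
Your route is essentially the paper's: the same split of the averaged cointegrating error into the efficient-shock boundary term and the integrated microstructure term $\int_{(k-1)\delta}^{k\delta}\{\xi_{i,N_i(s)}-\mu_i^*\}\,\mathrm d s$, a double summation by parts against the taper weights, Theorem~\ref{theo:fclt-levels} plus Assumption~\ref{hypo:microstructure-strong-taper-continuous} and the continuous mapping theorem for the two limits, negligibility of the boundary term under~(\ref{eq:pp-renforce-2}), and the ``same order'' argument at $H=1/2$. Two shortcuts should be cleaned up but are not fatal: the Kurtz--Protter stability theory is neither needed nor applicable (for $H>1/2$ the integrator $B_H$ is not a semimartingale); since the weights $w_\ell(j,n)$ and $\omega_\ell(j,n)$ are deterministic and smooth, plain CMT applied to functionals of the type $x\mapsto\int_0^1 h''_\ell(s)x(s)\,\mathrm d s$ followed by a pathwise integration by parts is all that is used in the paper. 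Likewise, a functional invariance principle for $\sum_{k\le n\cdot}R(k)$ is not available from the stated assumptions (they only give uniform moment bounds on forward recurrence times) and is not needed: the paper's Lemma~\ref{lem:strong-standard} bounds $\esp[|d_{\Delta\tilde r_i,\ell}|^2\mid\mathcal N]$ directly by $Cn^{-2}\sum_{k\le N_1(n\delta)}A_2^2(t_{1,k})=O_P(n^{-1})$, and your remainder step should be phrased this way (second-moment bounds on the weighted sums), not as a CLT.

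The genuine problem is the constant. Your own normalizations, $n^{-1/2}\tilde y_2(\lfloor n\cdot\rfloor)\Rightarrow\delta^{3/2}\sigma B$ with $\sigma^2=\theta^{-2}\lambda_1\sigma_{1,e}^2+\lambda_2\sigma_{2,e}^2$ and $n^{-H}\sum_{k\le n\cdot}(\xi_i^\sharp(k)-\delta\mu_i^*)\Rightarrow c_i\delta^H B_H^{(i)}$, are the careful ones, but they do not ``deliver the claimed limit'': pushed through the ratio they give a numerator factor $\delta^{H+3/2}$ per frequency and a denominator factor $\delta^{3}$, hence a limiting constant $\sqrt{\delta^{2H-3}(c_1^2+\theta^2c_2^2)/\sigma^2}$ rather than the $\sqrt{\delta^{2H}(c_1^2+\theta^2c_2^2)/\sigma^2}$ in the statement. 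The paper's own proof arrives at $\delta^{2H}$ only because, in the displayed limit of $n^{-1/2}d_{\Delta\tilde y_2,\ell}$, the Jacobian $\delta$ from the substitution $t=\delta s$ and the $\delta^{1/2}$ from $B(\delta\cdot)\stackrel d=\delta^{1/2}B(\cdot)$ are silently dropped; your $\delta^{2H-3}$ is in fact the bookkeeping consistent with the paper's remark that the choice of $\delta$ should not matter, since then $T^{3/2-H}(\hat\theta_{n,\delta}^{Tap}-\theta)$ with $T=n\delta$ has a $\delta$-free limit. So as written your final step is unsubstantiated: you must either carry the $\delta$-powers through and state the constant you actually obtain, or explain a cancellation that does not occur. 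A smaller slip of the same kind appears at $H=1/2$: the numerator $\sum_\ell d_{\Delta\tilde z,\ell}\bar d_{\Delta\tilde y_2,\ell}$ is $O_P(1)$, not $O_P(n^{1/2})$; your stated orders would only give the rate $n^{1/2}$, whereas the correct orders $d_{\Delta\tilde z,\ell}=O_P(n^{-1/2})$ and $d_{\Delta\tilde y_2,\ell}=O_P(n^{1/2})$ do yield $n(\hat\theta_{n,\delta}^{Tap}-\theta)=O_P(1)$.
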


Because Assumption \ref{hypo:microstructure-strong-taper-continuous} involves an integral
rather than a sum, we are able to verify that it holds for certain models with
noninteger durations such as ACD and LMSD under certain relationships with the
microstructure shocks.

In Theorem~\ref{theo:strong-continuous-taper}, we allow for leverage effects,
and therefore care is required in defining standard cointegration. As
demonstrated in Lemma~\ref{lem:propogation} (which assumes LMSD durations) if
there is a leverage effect, even when the microstructure shocks are the
differences of a weakly-dependent sequence, the cointegrating error need not be
$I(0)$. In such a case we have strong fractional cointegration rather than the
standard cointegration which might have been anticipated.

It is also possible that even though a leverage effect exists, the memory of
durations has no effect on the degree of cointegration. Specifically, if in
Lemma~\ref{lem:propogation} we replace $\xi_k=Y_{k+1}^2-1$ by $\xi_k =
H_2(Y_{k+1}) - .75H_3(Y_{k+1})$, where $H_2(y) = y^2-1$ and $H_3(y) = y^3-3y$
(the second and third Hermite polynomials, respectively), then there is a
leverage effect with $\mathrm {corr}(\tau_{k+1},\xi_k) = .082$. Nevertheless it
follows from an argument similar to the proof of Lemma~\ref{lem:propogation}
that Assumption~\ref{hypo:microstructure-strong-taper-continuous} holds in this
example with $H=1/2$, so that we have standard cointegration and
Theorem~\ref{theo:strong-continuous-taper} holds with $H=1/2$.

Lemma~\ref{lem:standard-dependence-noleverage} provides an example of standard
cointegration allowing for both time deformation and dependence between the
counting processes and microstructure
shocks. Theorem~\ref{theo:strong-continuous-taper} would hold for this example
with $H=1/2$.

\subsection{Spurious Regressions}
\label{sec:spurious}

In this subsection only, we consider a non-cointegrated version of the model
defined by (\ref{eq:P1}) and (\ref{eq:P2}),
\begin{align}
  y_1(t) & = \sum_{k=1}^{N_1(t)}(e_{1,k}+\eta_{1,k})+\theta_{21}
  \sum_{k=1}^{N_2(t_{1,N_1(t)})} e_{2,k}, \label{eq:P1NoCoint}
  \\
  y_2(t) & = \sum_{k=1}^{N_2(t)}(e_{2,k}+\eta_{2,k})+\theta_{12}
  \sum_{k=1}^{N_1(t_{2,N_2(t)})} e_{1,k}, \label{eq:P2NoCoint}
\end{align}
where $\theta_{12} \neq \theta_{21}^{-1}$. We examine here the properties of the
OLS estimator in the (spurious) regression of $y_1$ on $y_2$ in discrete time
and then briefly discuss corresponding tests for the null hypothesis of
cointegration. Corollary \ref{cor:fclt-spurious} below follows directly from the
proof of Theorem \ref{theo:fclt-levels}.

\begin{corollary}
  \label{cor:fclt-spurious}
  If Assumptions~\ref{hypo:lfgn-dates},
  \ref{hypo:lfgn-pp},~\ref{hypo:efficient-shocks} and \ref{hypo:microstructure}
  are satisfied and $y=(y_1,y_2)$ is given by (\ref{eq:P1NoCoint}) and
  (\ref{eq:P2NoCoint}) with $\theta_{12} \neq \theta_{21}^{-1}$, then as
  $n\to \infty$,
\[
\left(\frac{1}{\sqrt{n}}y(nu)\colon u\in[0,1]\right) \func
B_y=(B_y(u)\colon u\in[0,1]),
\]
where $B_y$ is a bivariate Brownian motion with $2\times 2$ covariance matrix
$\Sigma=(\Sigma_{i,j}\colon i,j=1,2)$ given by the entries
\begin{gather*}
  \Sigma_{1,1} = \lambda_1 \sigma_{1,e}^2 + \theta_{21}^2\lambda_2
  \sigma_{2,e}^2 \; , \ \Sigma_{2,2} = \theta_{12}^2
  \lambda_{1}   \sigma_{1,e}^2 + \lambda_2 \sigma_{2,e}^2 \; , \\
  \Sigma_{1,2} = \theta_{12} \lambda_1 \,  \sigma_{1,e}^2 +
  \theta_{21} \lambda_2  \, \sigma_{2,e}^2 = \Sigma_{2,1}.
\end{gather*}

\end{corollary}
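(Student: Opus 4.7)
The plan is to mirror the proof of Theorem~\ref{theo:fclt-levels} verbatim, with $\theta$ replaced by $\theta_{21}$ in the first component and $\theta^{-1}$ by $\theta_{12}$ in the second. Inspection of that proof shows it never exploits the reciprocal relation between the two spillover coefficients; it treats the efficient-shock sums $\sum e_{1,k}$ and $\sum e_{2,k}$ as separate contributions whose joint FCLT is produced by a random time change, and then simply adds them. The same logic therefore transports directly to the model (\ref{eq:P1NoCoint})--(\ref{eq:P2NoCoint}).

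Concretely, I would proceed in three steps. First, combine Assumption~\ref{hypo:microstructure} with $N_i(n\cdot)/n \stackrel{P}{\to} \lambda_i\,\cdot$ from Assumption~\ref{hypo:lfgn-pp} and a continuous-mapping/random-time-change argument in $\mathcal{D}([0,\infty))$ to conclude $n^{-1/2}\sum_{k=1}^{N_i(n\cdot)} \eta_{i,k} \Rightarrow 0$, so the microstructure contributions are negligible. Second, apply Donsker's FCLT to the i.i.d.\ sequences $\{e_{i,k}\}$ (Assumption~\ref{hypo:efficient-shocks}), yielding jointly $n^{-1/2}\sum_{k=1}^{[ns]} e_{i,k} \Rightarrow \sigma_{i,e} B_i(s)$ with $B_1,B_2$ independent standard Brownian motions, and compose with $N_1(nu)/n \to \lambda_1 u$ to recover the leading term of $y_1$. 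For the spillover term, the double inequality $t_{1,N_1(nu)} \leq nu < t_{1,N_1(nu)+1}$ together with Assumption~\ref{hypo:lfgn-dates} gives $t_{1,N_1(nu)}/n \stackrel{P}{\to} u$, so a further composition with $N_2(n\cdot)/n \to \lambda_2\,\cdot$ yields $N_2(t_{1,N_1(nu)})/n \to \lambda_2 u$. Symmetric manipulation handles $y_2$. Reading off the limit gives
\begin{align*}
n^{-1/2}(y_1(nu),y_2(nu)) \Rightarrow \bigl(\sigma_{1,e}\sqrt{\lambda_1}\,B_1(u)+\theta_{21}\sigma_{2,e}\sqrt{\lambda_2}\,B_2(u),\; \theta_{12}\sigma_{1,e}\sqrt{\lambda_1}\,B_1(u)+\sigma_{2,e}\sqrt{\lambda_2}\,B_2(u)\bigr),
\end{align*}
and expanding the covariances of this centered Gaussian pair reproduces the matrix $\Sigma$ in the statement.

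Since the argument is in essence a transcription of an already-proved result, there is no serious new obstacle. The one point worth flagging is the role of the hypothesis $\theta_{12}\neq\theta_{21}^{-1}$: it plays no part in the FCLT itself, but it guarantees that $\Sigma$ is non-singular. Writing $a:=\lambda_1\sigma_{1,e}^2$ and $b:=\lambda_2\sigma_{2,e}^2$, a short computation gives $\det\Sigma = ab\,(1-\theta_{12}\theta_{21})^2$, so $B_y$ is a genuine non-degenerate bivariate Brownian motion precisely when $\theta_{12}\theta_{21}\neq 1$, i.e.\ exactly when no linear combination $y_1-c y_2$ collapses to a short-memory series (confirming the interpretation as a spurious-regression, no-cointegration regime).
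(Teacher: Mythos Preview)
Your proposal is correct and takes essentially the same approach as the paper: the proof of Theorem~\ref{theo:fclt-levels} is in fact written for general coefficients $\theta_{21},\theta_{12}$ and specializes to $\theta_{21}=\theta$, $\theta_{12}=\theta^{-1}$ only at the last line, so Corollary~\ref{cor:fclt-spurious} drops out exactly as you describe. Your remark that $\theta_{12}\ne\theta_{21}^{-1}$ enters only through $\det\Sigma=\lambda_1\lambda_2\sigma_{1,e}^2\sigma_{2,e}^2(1-\theta_{12}\theta_{21})^2\ne0$ is a nice clarification the paper leaves implicit.
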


Next, we consider the discretization of $y_1(t)$ and $y_2(t)$ given by
(\ref{eq:P1NoCoint}) and (\ref{eq:P2NoCoint}) at integer time values,

\begin{align}
  y_{1,j}&=\sum_{k=1}^{N_1(j)}(e_{1,k}+\eta_{1,k})+\theta_{21}
  \sum_{k=1}^{N_2(t_{1,N_1(j)})} e_{2,k}, \label{P1jNoCoint}
  \\
  y_{2,j}&=\sum_{k=1}^{N_2(j)}(e_{2,k}+\eta_{2,k})+\theta_{12}
  \sum_{k=1}^{N_1(t_{2,N_2(j)})} e_{1,k}. \label{P2jNoCoint}
\end{align}
Regressing $y_{1,1},\ldots,y_{1,n}$ on $y_{2,1},\ldots,y_{2,n}$ without
intercept, we obtain the OLS estimator
\begin{equation}
  \label{spuriousestimator}
  \hat\delta_n=\frac{\sum_{j=1}^ny_{2,j}y_{1,j}}{\sum_{j=1}^ny_{2,j}^2}.
\end{equation}
Corollary \ref{cor:2} below follows directly from Corollary \ref{cor:fclt-spurious} and the
Continuous Mapping Theorem.

\begin{corollary}
  \label{cor:2}
  If Assumptions~\ref{hypo:lfgn-dates},
  \ref{hypo:lfgn-pp},~\ref{hypo:efficient-shocks} and \ref{hypo:microstructure}
  are satisfied and $y=(y_1,y_2)$ is given by (\ref{eq:P1NoCoint}) and
  (\ref{eq:P2NoCoint}) with $\theta_{12} \neq \theta_{21}^{-1}$, then as
  $n\to \infty$,
\[
\hat \delta_n \dlim \frac{\int_0^1 B_{2,y}(u) B_{1,y}(u) \, du}{\int_0^1 B_{1,y}^2(u) \, du} \,\,\,,
\]
where $B_y=(B_{1,y},B_{2,y})$ is the bivariate Brownian motion given
in Corollary \ref{cor:fclt-spurious}.
\end{corollary}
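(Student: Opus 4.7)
The plan is to obtain Corollary~\ref{cor:2} as a direct application of the continuous mapping theorem to Corollary~\ref{cor:fclt-spurious}. The key observation is that the estimator in~(\ref{spuriousestimator}) is invariant under a common rescaling of numerator and denominator, so
\[
\hat\delta_n = \frac{n^{-1}\sum_{j=1}^n\bigl(n^{-1/2}y_{1,j}\bigr)\bigl(n^{-1/2}y_{2,j}\bigr)}{n^{-1}\sum_{j=1}^n\bigl(n^{-1/2}y_{2,j}\bigr)^{2}},
\]
which displays both numerator and denominator as Riemann sums on the grid $\{j/n\}_{j=1}^{n}$ applied to the rescaled paths $u\mapsto n^{-1/2}y_i(nu)$.

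The first ingredient is Corollary~\ref{cor:fclt-spurious}, which provides the joint functional weak convergence
\[
\bigl(n^{-1/2}y_1(n\cdot),\,n^{-1/2}y_2(n\cdot)\bigr) \Rightarrow \bigl(B_{1,y},B_{2,y}\bigr)
\]
in $\mathcal D([0,1])^{2}$ equipped with the Skorohod $J_1$ topology. Since $B_y$ has almost-surely continuous sample paths, this convergence upgrades to the uniform topology on $C([0,1])^{2}$, which is precisely the setting in which Riemann-sum approximations behave continuously.

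Next I would introduce the functional
\[
\Phi(f,g) = \frac{\int_0^1 f(u)g(u)\,du}{\int_0^1 g(u)^{2}\,du},
\]
continuous at every pair $(f,g)\in C([0,1])^{2}$ with $g\not\equiv 0$. A standard Riemann-sum estimate, whose error is controlled by the modulus of continuity of the limiting Brownian paths, shows that the difference between $\hat\delta_n$ and $\Phi$ evaluated on the rescaled paths tends to zero in probability. The continuous mapping theorem then yields convergence in distribution of $\hat\delta_n$ to the ratio of integrals displayed in the statement.

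The only condition that needs verification is the almost-sure positivity of the denominator, which reduces to non-degeneracy of the covariance matrix $\Sigma$ given in Corollary~\ref{cor:fclt-spurious}. A short computation gives $\det\Sigma = \lambda_1\lambda_2\sigma_{1,e}^2\sigma_{2,e}^2\bigl(1-\theta_{12}\theta_{21}\bigr)^{2}$, which is strictly positive precisely under the spurious-regression assumption $\theta_{12}\ne\theta_{21}^{-1}$. Thus each coordinate of $B_y$ is a genuine nondegenerate Brownian motion and the CMT applies without obstruction; there is no substantive difficulty, which is why the paper describes the result as following ``directly'' from Corollary~\ref{cor:fclt-spurious}.
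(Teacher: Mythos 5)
Your proposal is correct and follows the same route as the paper, which simply invokes Corollary~\ref{cor:fclt-spurious} and the Continuous Mapping Theorem; your added details (continuity of the limit so that $J_1$ convergence is uniform, the Riemann-sum error controlled by the modulus of continuity of the limit paths, and positivity of the denominator — for which in fact only $\Sigma_{2,2}>0$ is needed, not full nondegeneracy of $\Sigma$) are exactly the standard fill-in. Note that your argument delivers the limit $\int_0^1 B_{1,y}(u)B_{2,y}(u)\,du\big/\int_0^1 B_{2,y}^2(u)\,du$, consistent with the definition of $\hat\delta_n$ in (\ref{spuriousestimator}); the denominator $\int_0^1 B_{1,y}^2(u)\,du$ appearing in the stated corollary is evidently a typo.
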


Corollary \ref{cor:2} together with Corollary \ref{cor:fclt-spurious} can be
used to motivate tests for the null hypothesis of no cointegration. We do not
pursue the details here, but it seems clear that the null distribution for unit
root tests based on the residuals $\{y_{1,j}-\hat\delta_n y_{2,j}\}_{j=1}^n$ can
be derived from Corollaries \ref{cor:fclt-spurious} and \ref{cor:2}, and that
these null distributions will have form similar to the distributions listed, for
example, in \citet[Proposition 19.4]{MR1278033}.

\section{Proofs}
\label{sec:proofs}

\begin{proof}[Proof of Lemma \ref{lem:TimeDeformation-nonstationaire}]
Write
\[
\frac{N(t)}t = \frac{\tilde N(f(t))}t = \frac{\tilde N(f(t))}{f(t)}
\frac{f(t)}{t} \;.
\]
By assumption, $f(t)\mapsto\infty$ (in probability if $f$ is random), thus $\tilde
N(f(t))/f(t)$ converges in probability to $\tilde\lambda$. By assumption, it
also holds that $t^{-1}f(t)\plim\gamma$. Thus, $\frac{N(t)}{t}
\plim \tilde \lambda\gamma$, so that
Assumption~\ref{hypo:lfgn-pp} holds for $N$ with $\lambda =
\tilde\lambda\gamma$. Next, we note that $N(t_{k}^-) \leq k$, thus
\begin{align*}
  1 & \leq \frac{N(t_{k})} k = 1 + \frac{N(t_{k}) - k}k \leq 1 +
  \frac{N(t_{k}) - N(t_{k}^-)}k \\
  & = 1 + \frac{\tilde N(f(t_{k}^-),f(t_{k})]}k \leq 1 +
  \frac{\tilde N(f(t_k)-C,f(t_{k})]}{f(t_k)} \frac{f(t_k)}k
  \end{align*}
  using the definition of $N$ and the boundedness requirement on the jumps of
  $f$.  Since $f(t_k)$ tends to infinity, it suffices to prove that if
  Assumptions~\ref{hypo:lfgn-dates} and~\ref{hypo:lfgn-dates} hold for $\tilde
  N$, then for any fixed positive $C$, it holds that
  \begin{align*}
    \frac{\tilde N(t-C,t]}{t} \plim 0 \; .
  \end{align*}
  Without loss of generality, set $\tilde\lambda=1$. Fix some
  $\epsilon\in(0,1/2)$.  Since $\tilde t_k/k\plim1$, with probability
  arbitrarily close to~1 (say bigger than $1-\epsilon$), there exists $k_0$ such
  that $\tilde t_k/k\in[1-\epsilon,1+\epsilon]$ for all $k\geq k_0$. For $k\geq
  k_0$, if $t_k \in(t-C,t]$, it necessarily holds that $t-C \leq k(1+\epsilon)$
  and $k(1-\epsilon)\leq t$. Hence, $(t-C)/(1+\epsilon) \leq k \leq
  t/(1-\epsilon)$. This implies that for $t$ large enough
  ($t<C+k_0(1+\epsilon)$), $\tilde N(t-C,t)$ is less than the number of integers
  between $(t-C)/(1+\epsilon)$ and $t/(1-\epsilon)$, i.e. $\tilde N(t-C,t] \leq
  c \epsilon t$, for some constant $c>0$. Thus $\tilde N(t)/t \leq c\epsilon$
  with probability tending to~1, and this proves that $\tilde N(t-C,t]/t\plim0$.
  It follows that $N(t_{k})/k = 1 + o_P(1)$, i.e.  $N(t_k)/t_{k}$ converges in
  probability to 1. Thus
\[
\frac{t_{k}}{k} = \frac{t_{k}}{N(t_{k})} \frac{N(t_{k})}{k}
\plim \frac{1}{\lambda} \; .
\]
\end{proof}

\begin{proof}[Proof of Lemma~\ref{lem:propogation}]
  Denote $H_\tau$ by $H$ to simplify the notation. Define $\xi_k =
  Y_{k+1}^2-1$. Then $\xi_k$ is centered, has finite variance summable
  autocovariance function, since $\mathrm{cov}(\xi_0,\xi_k) =
  2\mathrm{cov}^2(Y_0,Y_{k+1})$. Thus $\{\xi_k\}$ has a summable autocovariance
  function because $H\in(1/2,3/4)$. By \citet[Theorem~4]{MR1331224}, this implies
  that $\{\xi_k\}$ is in the domain of attraction of the standard Brownian
  motion, i.e.
\begin{align*}
  n^{-1/2} \sum_{k=1}^{[n\cdot]} \xi_k \func c' B \; ,
\end{align*}
with ${c'}^2 = \mathrm{var}(\xi_0) + 2 \sum_{k=1}^\infty
\mathrm{cov}(\xi_0,\xi_k)$. This proves~(\ref{eq:weak-dep}).

Assume now that $\tau_k = \epsilon_k \mathrm e^{\sigma Y_k}$ (with $\sigma=1$ in
the statement of the Lemma). The properties of Hermite polynomials yield that
$\esp[\mathrm e^{\sigma Y_0} H_j(Y_0)] = \sigma^j \mathrm e^{\sigma^2/2}$ for
all $j\geq1$.  Denote now $\lambda^{-1} = \esp[\tau_k] = \esp[\mathrm e^{\sigma
  Y_k}] = \mathrm e^{\sigma^2/2}$, $m=\esp[\xi_{k-1}\tau_k] = \esp[(Y_k^2-1)
\mathrm e^{Y_k}] = \sigma^2\mathrm e^{\sigma^2/2}$ and $G(y) = (y^2-1) \mathrm
e^{\sigma y} - m$. We now prove that~(\ref{eq:lrd-random-index}) holds with
$\mu^*=\lambda m$. Write
\begin{align}
  \int_0^{T} & (\xi_{N(s)} - \lambda m) \, \mathrm d s  \nonumber \\
  & = t_1 \xi_0 + \sum_{k=1}^{N(T)} \tau_{k+1} \xi_k - \lambda m T +
  (t_{N(T)+1}-T)  \xi_{N(T)+1}  \nonumber \\
  & = t_1 \xi_0 + \sum_{k=1}^{N(T)} (\epsilon_{k+1}-1) \xi_{k} \mathrm e^{\sigma
    Y_{k+1}} + \sum_{k=0}^{N(T)} G(Y_{k+1}) \label{eq:decomp} \\
  & + m(N(T)-\lambda T) - (t_{N(T)+1}-T) \xi_{N(T)+1} \; . \nonumber 
  \end{align}
  By Lemma~\ref{lem:check-lmsd} and applying H\"older's inequality, it can be
  shown that $(t_{N(T)+1}-T)(\xi_{N(T)+1}-\rho) = O_P(1)$. Since the sequence
  $\{\epsilon_k\}$ is independent of the Gaussian process $\{Y_k\}$, the second
  term in the righthand side of~(\ref{eq:decomp}) is in the domain of
  attraction of the standard Brownian motion, and the normalizing sequence is
  $\sqrt n$.  Thus we must obtain the joint asymptotic behaviour of
  $\sum_{k=1}^{N(Tt)} G(Y_k)$ and $N(Tt)-\lambda Tt$.

  The durations are in the domain of attraction of the fractional Brownian
  motion with Hurst index $H$, since
  \begin{align*}
    \sum_{k=1}^ n (\tau_{k} - \lambda^{-1}) & = \sum_{k=1}^n (\epsilon_k-1)
    \mathrm e^{\sigma Y_k} + \sum_{k=1}^n (\mathrm e^{\sigma Y_k} - \lambda^{-1}) \; .
  \end{align*}
  The first term in the righthand side is $O_P(\sqrt n)$ and the second sum,
  suitably normalized converges to the fractional Brownian motion with Hurst
  index $H$ because the function $x\mapsto\mathrm e^{\sigma x}-\lambda^{-1}$ has
  Hermite rank 1. See e.g. \cite{MR1331224}. More precisely, let
  $c_1=\esp[Y_1\mathrm e^{\sigma Y_1}]=\sigma\mathrm e^{\sigma^2/2}$ and define $g(y) =
  \mathrm e^{\sigma y} - \lambda^{-1} - c_1 y$.  The function $g$ has Hermite
  rank 2, and since $H\in(1/2,3/4)$, this implies that
  \begin{align*}
    \mathrm {var}\left(\sum_{k=1}^n g(Y_k) \right) = O(n) \; .
  \end{align*}
  Thus $\sum_{k=1}^n (\tau_k-\lambda^{-1})$ is asymptotically equivalent to $c_1
  \sum_{k=1}^n Y_k$.  Let $B_H$ denote the standard fractional Brownian motion
  with hurst index $H$. The assumption on the covariance of the Gaussian process
  $\{y_k\}$ implies that
  \begin{align*}
    n^{-H} \sum_{k=1}^{[n \cdot]} Y_k \func \varphi B_H
  \end{align*}
  with $\varphi^2 = c/\{H(2H-1)\}$.  Denote now $c_2 =
  \esp[Y_1G(Y_1)]=\sigma(\sigma^2+2)\mathrm e^{\sigma^2/2}$ and define $h(y) =
  G(y) - c_2 y$. Then $h$ has Hermite rank 2 and thus by similar arguments as
  above, $\sum_{k=1}^n G(Y_k)$ is asymptotically equivalent to $c_2 \sum_{k=1}^n
  Y_k$.  Thus we obtain
  \begin{align*}
    n^{-H} \left( \sum_{k=1}^{[nt]} (\tau_k - \lambda^{-1}), \sum_{k=1}^{[nt]}
      G(Y_k) \right) \func (c_1 \varphi B_H(t), c_2 \varphi B_H(t)) \; .
  \end{align*}
  By Vervaat's Lemma (see \cite{MR0321164} or \citet[Proposition
  3.3]{MR2271424}), the previous convergence implies that
  \begin{align*}
    n^{-H} \left( N(nt) - \lambda n t, \sum_{k=1}^{[nt]} G(Y_k) \right)
    \func (-\lambda c_1 \varphi B_H(\lambda t), c_2 \varphi B_H(t)) \; .
  \end{align*}
  By the continuity of the composition map, this yields
  \begin{align*}
    n^{-H} \left( N(nt) - \lambda n t, \sum_{k=1}^{N(nt)} G(Y_k) \right)
    \func (-\lambda c_1 \varphi B_H(\lambda t), c_2 \varphi B_H(\lambda
    t)) \; .
  \end{align*}
Next we obtain  that
\begin{align*}
  n^{-H} \left\{ \sum_{k=1}^{N(nt)} G(Y_k) + m( N(nt) - \lambda nt) \right\}
  \func \varphi (c_2 - \lambda m c_1) B_H(\lambda t) \;
\end{align*}
with $c_2-\lambda mc_1 = 2\sigma \mathrm e^{\sigma^2/2}>0$.
We conclude that $n^{-H} \int_0^{n\cdot} \{\xi_{N(s)} - \lambda m\} \, \mathrm d s
\func \varphi (c_2 - \lambda m c_1) B_H$.
\end{proof}

\subsection{Proof of Theorem~\ref{theo:fclt-levels} and Corollary~\ref{cor:fclt-spurious}}

We first need the following Lemma.
\begin{lemma}
   \label{lem:dini}
   Under Assumption~\ref{hypo:lfgn-dates} and~\ref{hypo:lfgn-pp},
   $N_i(t_{j,N_j(nt)})/n$ converges in probability uniformly on compact sets to
   $\lambda_i t$, where $\{i,j\}=\{1,2\}$.
\end{lemma}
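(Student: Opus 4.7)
The plan is to strengthen both Assumption~\ref{hypo:lfgn-dates} and Assumption~\ref{hypo:lfgn-pp} from pointwise convergence in probability to convergence in probability uniformly on every compact interval, by exploiting the monotonicity of the underlying processes and the continuity of the limits. Once this upgrade is in place, the double random time change $t\mapsto N_i(t_{j,N_j(nt)})/n$ reduces to two routine applications of the continuous-mapping principle.

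For each fixed $s\geq 0$, Assumption~\ref{hypo:lfgn-dates} applied along the deterministic subsequence $k=\lfloor ns\rfloor$ yields $t_{j,\lfloor ns\rfloor}/n \stackrel P\to s/\lambda_j$, and Assumption~\ref{hypo:lfgn-pp} directly gives $N_i(nu)/n \stackrel P\to \lambda_i u$. Both processes $s\mapsto t_{j,\lfloor ns\rfloor}/n$ and $u\mapsto N_i(nu)/n$ are nondecreasing, and both limits are continuous and nondecreasing. A P\'olya-type argument — pick a finite $\epsilon$-net $0=u_0<u_1<\cdots<u_K=T$ on which the continuous limit varies by less than $\epsilon$ between consecutive points, and sandwich values at arbitrary $u$ between those at neighbouring net points by monotonicity — promotes pointwise-in-probability convergence to uniform-in-probability convergence on any compact $[0,T]$.

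Setting $s_n(t):=N_j(nt)/n$ and writing $t_{j,N_j(nt)}/n = t_{j,\lfloor n s_n(t)\rfloor}/n$ (using that $N_j(nt)$ is an integer), the two uniform convergences compose to give $t_{j,N_j(nt)}/n \stackrel P\to t$ uniformly in $t\in[0,T]$: $s_n$ converges uniformly to the continuous function $t\mapsto\lambda_j t$, and $s\mapsto t_{j,\lfloor ns\rfloor}/n$ converges uniformly on $[0,\lambda_j T+1]$ to the continuous function $s\mapsto s/\lambda_j$, so the composite limit is $(\lambda_j t)/\lambda_j = t$. One more composition, now with $u\mapsto N_i(nu)/n$ evaluated at $u=t_{j,N_j(nt)}/n$, delivers the claim $N_i(t_{j,N_j(nt)})/n \stackrel P\to \lambda_i t$ uniformly on $[0,T]$.

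The main obstacle is the P\'olya-type upgrade from pointwise-in-probability to uniform-in-probability convergence, since the stated assumptions supply no rate or modulus-of-continuity information; monotonicity of the processes combined with continuity of the deterministic limits is precisely what unlocks this step, after which the random time change is entirely standard.
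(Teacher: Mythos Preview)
Your proposal is correct and relies on the same key device as the paper's proof: the Dini/P\'olya principle that nondecreasing processes converging pointwise in probability to a continuous limit converge uniformly on compacts. The only cosmetic difference is organizational---the paper first obtains pointwise convergence of the composite via the product factorization $N_i(t_{j,N_j(nu)})/n = \bigl[N_i(t_{j,N_j(nu)})/t_{j,N_j(nu)}\bigr]\cdot\bigl[t_{j,N_j(nu)}/N_j(nu)\bigr]\cdot\bigl[N_j(nu)/n\bigr]$ and then applies Dini once to the monotone composite, whereas you upgrade each ingredient to uniform convergence first and then compose.
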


\begin{proof}[Proof of Lemma~\ref{lem:dini}]
  The sequence of (random) functions $N_i(n\cdot)/n$ is nondecreasing and
  converges pointwise in probability to $\lambda_i t$ by ergodicity.  A sequence
  of nondecreasing function converging to a continuous function converges
  uniformly on compact sets. This results is known as Dini's
  Theorem. Cf. \citet[page~3]{MR900810}. Thus the convergence of $N_i(n\cdot)/n$
  is uniform on compact sets. Assumptions~\ref{hypo:lfgn-dates}
  and~\ref{hypo:lfgn-pp} imply that $N_i(t)\plim\infty$ and
  $t_{i,n}\plim\infty$. Thus
  \begin{align*}
    \frac{ N_i(t_{j,N_j(nu)}) }{n} = \frac{ N_i(t_{j,N_j(nu)}) } {t_{j,N_j(nu)}}
    \times \frac{t_{j,N_j(nu)}}{N_j(nu)} \times \frac{N_j(nu)}{n} \plim \lambda_i
    \times \frac1{\lambda_j} \times \lambda_j u = \lambda_i u \; .
  \end{align*}
  Applying again Dini's lemma, we also have that $N_i(t_{j,N_j(nu)})/n$
  converges  uniformly on compact sets to $\lambda_iu$.
\end{proof}

\begin{proof}[Proof of Theorem~\ref{theo:fclt-levels}]
  Denote $S_{i,n}^{e}(t) = \sum_{k=1}^{[nt]} e_{i,k}$ and $S_{i,n}^{\eta}(t) =
  \sum_{k=1}^{[nt]} \eta_{i,k}$, $i=1,2$.  Under
  Assumptions~\ref{hypo:efficient-shocks} and~\ref{hypo:microstructure},
  $n^{-1/2}(S_{1,n}^e,S_{2,n}^e,S_{1,n}^\eta,S_{1,n}^\eta)$ converges weakly to
  $(\sigma_{1,e} B_1, \sigma_{2,e} B_2,0,0)$, where $B_1$ and $B_2$ are
  independent standard Brownian motions. This follows from the independence of
  $e_1$ and $e_2$ and the local uniform convergence to 0 in probability of
  $n^{-1/2}S_{i,n}^{\eta}$.  With the previous notation, (\ref{eq:P1NoCoint})
  and (\ref{eq:P2NoCoint}) become
  \begin{align*}
    y_1(nt) & = S_{1,n}^e(N_1(nt)) + \theta_{21} S_{2,n}^e(N_2(t_{1,N_1(nt)})) +
    S_{1,n}^\eta(N_1(nt)) \; ,
    \\
    y_2(nt) & = S_{2,n}^e(N_2(nt)) + \theta_{12} S_{1,n}^e(N_1(t_{2,N_2(nt)})) +
    S_{2,n}^\eta(N_2(nt)) \; .
  \end{align*}
  By Lemma~\ref{lem:dini} and the continuity of the composition map on $\mathcal
  C \times \mathcal C$ endowed with the metric of uniform convergence on compact
  sets (see e.g.  \citet[Chapter 3, Section 17]{MR0233396}), we obtain the joint
  convergence of
  \begin{multline*}
    n^{-1/2} \left( S_{1,n}^e(N_1(n\cdot)), S_{1,n}^e(N_1(t_{2,N_2(n\cdot)})),
    \right.    \\
    \left.  S_{2,n}^e(N_2(n\cdot)), S_{2,n}^e (N_2(t_{1,N_1(n\cdot)})) ,
      S_{1,n}^\eta(N_1(n\cdot)), S_{2,n}^\eta(N_2(n\cdot)) \right)
  \end{multline*}
  to $(\sigma_{1,e} \sqrt{\lambda_1} B_1,\sigma_{1,e}
  \sqrt{\lambda_2}B_1, \sigma_{2,e} \sqrt{\lambda_2} B_2,\sigma_{2,e}
  \sqrt{\lambda_2} B_2, 0, 0)$. This yields
  Corollary~\ref{cor:fclt-spurious} and Theorem~\ref{theo:fclt-levels} by setting
  $\theta_{21}=\theta$ and $\theta_{12} = \theta^{-1}$.
\end{proof}

\subsection{Proof of Theorems~\ref{theo:ols-weak} \ref{theo:t}
  \ref{theo:Aggregation} and~\ref{theo:ols-strong}}

\begin{proof}[Proof of Theorem~\ref{theo:ols-weak}]
Write 
\begin{align*}
  \hat\theta_n^{OLS} = \theta + \frac{\sum_{j=1}^n \{y_1(j)-\theta y_2(j)\}
    y_2(j)}{\sum_{j=1}^n y_2^2(j)} \; .
\end{align*}
Assumptions \ref{hypo:lfgn-dates}, \ref{hypo:lfgn-pp},
\ref{hypo:efficient-shocks}, \ref{as:independence-shocks} and
\ref{hypo:microstructure-weak} imply those of
Theorem~\ref{theo:fclt-levels}. Thus we can apply the Continuous Mapping Theorem
and obtain
\begin{align}
  \label{eq:denominator}
  n^{-2} \sum_{j=1}^n y_2^2(j) \dlim \{\theta^{-2} \lambda_{1}
  \sigma_{1,e}^2 + \lambda_2 \sigma_{2,e}^2 \} \int_0^1B^2(s) \, \mathrm d s \; ,
\end{align}
where $B$ is a standard Brownian motion.  Thus, in order to study the
convergence of $\hat\theta_n^{OLS} - \theta$ suitably renormalized, it suffices to
study the sum
\begin{align*}
  \sum_{j=1}^n \{y_1(j)-\theta y_2(j)\} y_2(j) \; .
\end{align*}
We further decompose the cointegrating error.  Denote
\begin{align*}
  y_1^e(j) & = \sum_{k=1}^{N_1(j)} e_{1,k} + \theta
  \sum_{k=1}^{N_2(t_{1,N_1(j)})} e_{2,k} \; , \ \ \ y_1^\eta(j) =
  \sum_{k=1}^{N_1(j)} \eta_{1,k} \; , \\
  y_2^e(j) & = \sum_{k=1}^{N_2(j)} e_{2,k} + \theta^{-1}
  \sum_{k=1}^{N_1(t_{2,N_2(j)})} e_{1,k} \; , \ \ \ y_2^\eta(j) =
  \sum_{k=1}^{N_2(j)} \eta_{2,k} \; , \\
  r_{1,j} & = \sum_{k=N_1(t_{2,N_2(j)})+1}^{N_1(j)} e_{1,k} \; , \ \ \ r_{2,j}
  = \sum_{k=N_2(t_{1,N_1(j)})+1}^{N_2(j)} e_{2,k} \; .
\end{align*}
With this notation, we can write
\begin{align}
  \sum_{j=1}^n \{y_1(j)-\theta y_2(j)\} y_2(j) & = \sum_{j=1}^n \{r_{1,j} -
  \theta r_{2,j}\} y_2(j) + \sum_{j=1}^n \{y_1^\eta(j)-\theta y_2^\eta(j)\}
  y_2(j) \; . \label{eq:cointegrating-error}
\end{align}
Applying Theorem~\ref{theo:fclt-levels},
Assumption~\ref{hypo:microstructure-weak} and the Continuous Mapping Theorem, we
obtain
\begin{align*}
  n^{-3/2-H} &  \sum_{j=1}^n \{y_1^\eta(j) - \theta y_2^\eta(j)\} y_2(j) \\
  & \dlim \int_0^1 \{\theta^{-1}\sqrt{\lambda_1}\sigma_{1,e}B_1(t) + \sqrt{\lambda_2}
  \sigma_{2,e} B_2(t)\} \{ c_1 B_{1,H}(\lambda_1t) - \theta c_2
  B_{2,H}(\lambda_2t) \}\, \mathrm d t \\
  & \equallaw \Sigma \int_0^1 B(t) B_H(t) \, \mathrm d t
\end{align*}
    where $B$ is a standard Brownian motion, $B_H$ is a fractional Brownian
    motion, independent of $B$ and
\begin{align}
  \label{eq:var-frac}
  \Sigma^2 = (\theta^{-2} \lambda_{1} \sigma_{1,e}^2 + \lambda_2 \sigma_{2,e}^2)
  (c_1^2 \lambda_1^{2H} + \theta^2 c_2^2 \lambda_2^{2H}) \; .
\end{align}
There only remains to prove that, for $i=1,2$,
  \begin{align}
    \label{eq:OP(n3/2)}
      n^{-3/2} \sum_{j=1}^n r_{i,j} y_2(j) = O_P(1) \; .
  \end{align}
  The convergence of $n^{-1/2}y_2$ is uniform on $[0,1]$, so $n^{-1/2}\max_{1\leq
    j \leq n} |y_2(j)| = O_P(1)$.  Therefore, it suffices to prove that
  \begin{align}
  \label{eq:Op1}
    n^{-1} \sum_{j=1}^n |r_{i,j}|  = O_P(1) \; .
  \end{align}
Recall that  $N_i(s) < k \Leftrightarrow
  t_{i,k} > s$. Thus, for $k\leq N_1(n)$,
  \begin{align*}
    N_1(t_{2,N_2(j)}) < k \leq N_1(j) \Leftrightarrow t_{2,N_2(j)} < t_{1,k} \leq j \; .
  \end{align*}
  The first inequality on the righthand side means that there is no point of
  $N_2$ between $t_{1,k}$ and $j$, i.e. $j \leq t_{2,N_2(t_{1,k})+1}$. Let
  $A_2(t) = t_{2,N_2(t)+1}-t$ denote the forward recurrence time of $N_2$,
  i.e. the time between $t$ and the next event of $N_2$ after $t$.  Thus,
  \begin{align*}
    \sum_{j=1}^n |r_{1,j}| \leq \sum_{j=1}^n
    \sum_{k=N_1(t_{2,N_2(j)})+1}^{N_1(j)} |e_{1,k}| = \sum_{k=1}^{N_1(n)}
    |e_{1,k}| \{A_2(t_{1,k}) +1\}\; .
  \end{align*}
  We thus get the bound for the conditional expectation given the sigma-field
  $\mathcal N$ generated by the counting processes $N_1$ and $N_2$:
  \begin{align*}
    \esp \left[ \sum_{j=1}^n |r_{1,j}| \mid \mathcal N \right] \leq C
    \sum_{k=1}^{N_1(n)} A_2(t_{1,k}) \; .
  \end{align*}
  Conditioning on $N_1$ and applying \eqref{eq:pp-renforce} yields
\begin{align*}
  \esp \left[ \sum_{j=1}^n |r_{1,j}| \mid N_1 \right] \leq C N_1(n) = O_P(n) \; .
\end{align*}
This proves~(\ref{eq:Op1}) and concludes the proof of
Theorem~\ref{theo:ols-weak}.
\end{proof}

\begin{proof}[Proof of Theorem~\ref{theo:t}]
We note first that
\begin{align*}
  n \hat \sigma^2_{\hat\theta_n^{OLS}} & =
  \frac{\sum_{j=1}^n[y_1(j)-\hat\theta_n^{OLS}y_2(j)]^2}{\sum_{j=1}^n y_2^2
    (j)}= \frac{\sum_{j=1}^n[y_1(j)-\theta y_2(j)
    +(\theta-\hat\theta_n^{OLS})y_2(j)]^2}{\sum_{j=1}^n y_2^2 (j)} \\
  & = \frac{\sum_{j=1}^n[y_1(j)-\theta y_2(j)]^2} {\sum_{j=1}^n y_2^2 (j)} -
  (\theta-\hat\theta_n^{OLS})^2 \; .
\end{align*}
Thus,
\[
\frac{n}{t_n^2} = \frac{1}{n^{1-2H}(\hat\theta_n^{OLS}-\theta)^2}
\frac{n^{-2H-1}\sum_{j=1}^n [y_1(j)-\theta y_2(j)]^2}{n^{-2}\sum_{j=1}^n
  y_2^2(j)} -1 \; .
\]
Note that
\begin{align*}
  n^{-2H-1} & \sum_{j=1}^n [y_1(j)-\theta y_2(j)]^2 \\
  & = n^{-2H-1} \sum_{j=1}^n [y_1^\eta (j) - \theta y_2^\eta (j)]^2 \\
  & \ \ \ + n^{-2H-1} \sum_{j=1}^n (r_{1,j} - \theta r_{2,j} )^2 + 2 n^{-2H-1}
  \sum_{j=1}^n (r_{1,j} - \theta r_{2,j} ) (y_1^\eta (j) - \theta y_2^\eta (j)) \; .
\end{align*}
By Theorem~\ref{theo:fclt-levels} and the Continuous Mapping Theorem, the first
term is $O_P(1)$. We will prove below that $n^{-1} \sum_{j=1}^n (r_{1,j}-\theta
r_{2,j})^2 = O_P(1)$, implying that the second and last terms are $o_P(1)$.  By
the proof of Theorem~\ref{theo:ols-weak}, we also have that
\begin{align*}
  n^{1/2-H} (\hat\theta_n^{OLS} - \theta) = \frac{n^{-3/2-H} \sum_{j=1}^n
    (y_1^\eta(j)-\theta y_2^\eta(j))y_2(j) + o_P(1)}{n^{-2} \sum_{j=1}^n y_2^2(j)} \; .
\end{align*}
Thus, we can write
\begin{align*}
  \frac n{t_n^2} & = \frac {\left\{n^{-2} \sum_{j=1}^n y_2^2(j)\right\}
    \left\{n^{-2H-1}\sum_{j=1}^n [y_1^\eta(j)-\theta y_2^\eta(j)]^2 +
      o_P(1)\right\}} { \left\{n^{-3/2-H} \sum_{j=1}^n [y_1^\eta(j)-\theta
      y_2^\eta(j)] y_2(j) + o_P(1)\right\}^2} - 1 \; .
\end{align*}
By Theorem~\ref{theo:fclt-levels}, we know that $(n^{-H-1} \sum_{j=1}^{[n\cdot]}
[y_1^\eta(j)-\theta y_2^\eta(j)],n^{-1/2} y_2([n\cdot]))$ converge jointly to
$(\varsigma B,\varsigma_HB_H)$, where $B$ is a standard Brownian motion, $B_H$
is a standard fractional motion, mutually independent, and $\varsigma$ and
$\varsigma_H$ are positive constants. Thus, by the Continuous Mapping Theorem,
we have
\[
\frac{n}{t_n^2} \dlim \frac{\int_0^1 B^2(t) dt \int_0^1 B_H^2(t)dt} {\left \{
    \int_0^1 B(t) B_H(t)dt\right \}^2} - 1 \; .
\]
We now deal with the remainder term $n^{-1} \sum_{j=1}^n (r_{1,j}-\theta
r_{2,j})^2$. We only prove that $n^{-1} \sum_{j=1}^n r_{1,j}^2 = O_P(1)$, the
proof for the term involving $r_{2,j}$ being similar. Since the counting
processes and the efficient shocks are independent, taking conditional
expectations, we obtain
\begin{align*}
  \esp \left[ \sum_{j=1}^n r_{1,j}^2 \mid \mathcal N \right] = \sum_{j=1}^n
  \sum_{k=N_1(t_{2,N_2(j)})+1}^{N_1(j)} \sigma_{1,e}^2 = \sigma_{1,e}^2
  \sum_{k=1}^{N(n)} \{A_2(t_{1,k}) +1\} \; .
\end{align*}
Sicne the counting processes are mutually independent, we can apply
Condition~(\ref{eq:pp-renforce}) to see that the expectation of the sum in the
righthand side is $O(n)$. Thus $\sum_{j=1}^n r_{1,j}^2=O_P(n)$ and this
concludes the proof.
\end{proof}

\begin{proof}[Proof of Theorem~\ref{theo:Aggregation}]
  We will prove below that there exists a positive constant $C$ such that
  \begin{align} 
    \label{eq:SmConsistency}
    m^{2-2H} s_m^2 \plim C \; .
  \end{align}
  Thus,
\[
\log (m^{2-2H} s_m^2) \plim \log C \; , 
\]
and
\[
\hat H = 1 + \frac{\log(s_m^2)}{2\log m} = H + \frac{\log(m^{2-2H}s_m^2)}{2\log
  m} \plim H \; .
\]
\end{proof}

\begin{proof}[Proof of~\eqref{eq:SmConsistency}]
Elementary algebra yields
\[
m X_k^{(m)} = A_{k,m} + V_{k,m} - R_{k,m}
\]
where
\begin{align*}
  A_{k,m}  = \sigma_1\{B_{1,H} (N_1(km)) & - B_{1,H} (N_1((k-1)m))\} \\
  & - \theta\sigma_2\{B_{2,H} (N_2(km)) - B_{2,H} (N_2((k-1)m))\} \; ,
  \\
  R_{k,m}  = (\hat \theta_n^{OLS}  - \theta ) [y_2(km) & - y_2((k-1)m)] \; ,
\end{align*}
\begin{align*}
  V_{k,m} = \sum_{\ell = N_1(t_{2,N_2(km)})+1}^{N_1(km)}e_{1,\ell} & - \theta
  \sum_{\ell = N_2(t_{1,N_1(km)})+1}^{N_2(km)} e_{2,\ell} \\
  & - \sum_{\ell = N_1(t_{2,N_2((k-1)m)})+1}^{N_1((k-1)m)} e_{1,\ell} + \theta
  \sum_{\ell = N_2(t_{1,N_1((k-1)m)})+1}^{N_2((k-1)m)} e_{2,\ell} \; .
\end{align*}
The convergence~\eqref{eq:SmConsistency} is a consequence of the following three
convergences. 
\begin{align}
  \label{eq:RNegligible}
  m^{1-2H} n^{-1} \sum_{k=1}^{\lfloor n/m \rfloor} R_{k,m}^2
  \plim 0 \; , \\
  \label{eq:VNegligible}
 m^{1-2H}  n^{-1} \sum_{k=1}^{\lfloor n/m \rfloor}
 V_{k,m}^2 \plim 0 \; , \\
 \label{eq:AConsistency}
 m^{1-2H} n^{-1} \sum_{k=1}^{\lfloor n/m \rfloor} A_{k,m}^2 \plim C \; .
\end{align}
We will only prove~\eqref{eq:AConsistency}, the other convergences being similarly
and more easily obtained. Let $T_n = m^{1-2H} n^{-1} \sum_{k=1}^{\lfloor n/m
  \rfloor} A_{k,m}^2$ and let $\mathcal N$ denote the sigma-field generated by
the point processes $N_1$ and $N_2$.  We will prove that there exists a positive constant
$C$ such that
\begin{align}
  \label{eq:conv-prob-esp-con}
  & \lim_{n\to\infty} \esp[T_n\mid \mathcal N] = C \; , \\
  & \lim_{n\to\infty} \var(T_n \mid \mathcal N) = 0 \; . \label{eq:conv-var-con}
\end{align}
%
%
By the Bienaym\'e-Chebyshev inequality, (\ref{eq:conv-prob-esp-con})
and~(\ref{eq:conv-var-con}), we have
\begin{align*}
  \pr(|T_n-C| > \epsilon \mid \mathcal N) & \leq \epsilon^{-2} \esp[|T_n-C|^2 \mid
  \mathcal N]  \\
  & \leq \epsilon^{-2} \var(T_n \mid \mathcal N) + \epsilon^{-2} (\esp[T_n\mid
  \mathcal N]-C)^2 \plim 0 \; .
\end{align*}
This precisely means that $T_n$ converges to $C$ in conditional probability,
i.e.~for all $\epsilon>0$,
\begin{align*}
  \lim_{n\to\infty} \pr(|T_n-C| > \epsilon \mid \mathcal N) = 0 \; .
\end{align*}
Since a probability is bounded by one and $\pr(|T_n-C|>\epsilon) =
\esp[\pr(|T_n-C|>\epsilon \mid \mathcal N)]$, the bounded convergence theorem
implies that for all $\epsilon>0$,
\begin{align*}
  \lim_{n\to\infty} \pr(|T_n-C| > \epsilon) = 0 \; , 
\end{align*}
i.e. $T_n\plim C$.

For simplicity of notation, we also assume that $n/m$ is an integer. For any
$a<b$, $N_i(a,b]$ denotes the number of points of $N_i$ in the interval
$(a,b]$. Note that $N_i(0,t]=N_i(t)$ for all~$t>0$.  Since $B_{1,H}$ and $B_{2,H}$
are independent of $\mathcal N$, we have, for any $s<t$,
\begin{align}
  \label{eq:varcon-BH}
  \var( B_{i,H}(N_i(t)-B_{i,H}(N_i(s)) \mid \mathcal N) = \{N_i(s,t]\}^{2H} \; .
\end{align}
Since moreover $B_{1,H}$ and $B_{2,H}$ are mutually independent, this yields
\begin{align*}
  \esp&[T_n \mid \mathcal N] \\
& = \sigma_1^2 m^{1-2H} n^{-1} \sum_{k=1}^{n/m}
  \{N_1(((k-1)m,km])\}^{2H} + \sigma_2^2 \theta^2 m^{1-2H} n^{-1}
  \sum_{k=1}^{n/m} \{N_2(((k-1)m,km])\}^{2H}
  \\
  & = \sigma_1^2 T_{1,n} + \sigma_2^2 \theta^2T_{2,n} \; .
\end{align*}
We will prove that $T_{i,n}$ converges in probability to $\lambda_i^{2H}$,
$i=1,2$, where $\lambda_i$ is the intensity of $N_i$,
i.e.~$\esp[N_i(0,1]]=\lambda_i$.  This will imply~(\ref{eq:conv-prob-esp-con}) with
$C = \sigma_2^2 \lambda_1^{2H}+\sigma_2^2 \theta^2\lambda_2^{2H}$.

Since $N_i$ is stationary, we have
\begin{align*}
  \esp \left[ \left| T_{i,n} - \lambda_i^{2H} \right| \right] \leq \esp \left[
    \left| \{m^{-1} N_i(m)\}^{2H} - \lambda_i^{2H} \right| \right] \; .
\end{align*}
For brevity, we now omit the subscript $i$. By stationarity and ergodicity,
$m^{-1}N(m)$ converges almost surely to $\lambda$ as $m$ goes to infinity. Since
$0<2H<1$, it holds that $|a^{2H}-b^{2H}|\leq |a-b|^{2H}$ for all real numbers
$a$, $b$. Thus, for any $\epsilon>0$,
\begin{align*}
  \esp & \left[ \left| \{m^{-1} N(m)\}^{2H} - \lambda^{2H} \right| \right] \\
  & \leq \epsilon^{2H} + \esp \left[ \left| \{m^{-1} N(m)\}^{2H} - \lambda^{2H}
    \right| \mathbf1_{|m^{-1} N(m) - \lambda|>\epsilon\}} \right] \\
  & \leq \epsilon^{2H} + \esp \left[\{m^{-1} N(m)\}^{2H} \mathbf1_{|m^{-1}
      N(m) - \lambda|>\epsilon\}} \right] +\lambda^{2H} \pr(|m^{-1} N(m) -
  \lambda|>\epsilon)\; .
\end{align*}
By ergodicity, the last term above tends to zero as $m$ tends to infinity. To
deal with the middle term, we apply H\"older's inequality and obtain
\begin{align*}
  \esp  [\{m^{-1} N(m)\}^{2H} & \mathbf1_{|m^{-1} N(m) -
      \lambda|>\epsilon\}}] \\
  & \leq \esp^{2H} [m^{-1} N(m) ]
  \pr^{1-2H}(|m^{-1} N(m) - \lambda|>\epsilon)  \\
  & = \lambda^{2H} \pr^{1-2H}(|m^{-1} N(m) - \lambda|>\epsilon) \to 0  \; ,
\end{align*}
as $m\to\infty$, again by ergodicity. Thus we obtain that $\lim_{n\to_infty}
\esp \left[ \left| T_{i,n} - \lambda_i^{2H} \right| \right]=$ and this concludes
the proof of~(\ref{eq:conv-prob-esp-con}).

We now prove~(\ref{eq:conv-var-con}). We denote the conditional variance and
covariance given $\mathcal N$ by $\var_N$ and $\cov_N$, respectively. Since $A_{k,m}$ is
conditionally Gaussian, we have, for all $k,k'$,
\begin{align*}
  \cov_N(A_{k,m}^2,A_{k',m}^2)) = 2 \cov_N^2(A_{k,m},A_{k',m}) \; .
\end{align*}
Thus, denoting $S_n = \sum_{k=1}^{[n/m]} A_{k,m}^2$, we have
\begin{align*}
  \var_N(S_n) & = 2 \sum_{k=1}^{n/m} \var_N^2(A_{k,m}) + 4 \sum_{k=1}^{n/m-1}
  \sum_{k'=k+1}^{n/m} \cov^2_N(A_{k,m},A_{k',n}) = 2 \times I + 4 \times II \; .
\end{align*}
Applying~(\ref{eq:varcon-BH}) to compute $\var_N(A_{k,m})$ and taking
expectation, we have, by stationarity,
\begin{align*}
  m^{2-4H} n^{-2} \esp[I] = \sigma_1^2 mn^{-1} \esp[(N_1(m)/m)^{4H}] + 
  \sigma_2^2 \theta^2 mn^{-1}\esp[(N_2(m)/m)^{4H}] \; .
\end{align*}
Since $4H<2$, by Jensen's inequality, $\esp[N_i^{4H}(m)] \leq
\{\esp[N_i^2(m)]\}^{2H}$ and by stationarity $\esp[N_i^2(m)] \leq
m^2\esp[N_i^2(1)]$. Thus $m^{2-4H} n^{-2} \esp[I] = O(m/n)$.

  Consider now the last term $II$. For any positive real
numbers $s<t<u<v$ and a standard fractional Brownian motion $H$, we have
\begin{align*}
  \cov(B_H(t) - B_H(s),B_H(v)-B_H(u)) & = |v-s|^{2H} - |v-t|^{2H} + |u-t|^{2H} -
  |u-s|^{2H} \; .
\end{align*}
Thus, 
\begin{align*}
  \cov_N(A_{k,m},A_{k',n}) & = \sigma_1^2\left\{N_1^{2H}((k-1)m,k'm] - N_1^{2H}(km,k'm] \right. \\
  & + \left. N_1^{2H}(km,(k'-1)m] - N_1^{2H}((k-1)m,(k'-1)m] \right\} \\
& + \sigma_2^2\theta^2\left\{N_2^{2H}((k-1)m,k'm] - N_2^{2H}(km,k'm]  \right. \\
& + \left. N_2^{2H}(km,(k'-1)m] - N_2^{2H}((k-1)m,(k'-1)m] \right\} \\
 & = \sigma_1^2 C_{k,k'} + \sigma_2^2\theta^2 C'_{k,k'} \; .
\end{align*}
Hereafter, we only deal with the terms related to $N_1$, the other terms being
similarly dealt with and we omit the subscript~1.  For any $a,c\geq0$ and $b>0$,
since $0<2H<1$, we have
\begin{align*}
  0 & \leq  (a+b)^{2H} - b^{2H} - (a+b+c)^{2H} + (b+c)^{2H} \\
  & = 2H (1-2H) \int_b^{a+b} \int_z^{z+c} u^{2H-2} \mathrm{d} u \, \mathrm{d} z
  \leq ac b^{2H-2} \; .
\end{align*}
Applying this bound with $a_k=N((k-1)m,km]$, $b_{k,k'}=N(km,(k'-1)m]$ 
yields
\begin{align*}
 C_{k,k'}^2  \leq  b_{k,k'}^{4H-4} a_k^2 a_{k'}^2  \mathbf1_{\{b_{k,k'}>0\}} \; .
\end{align*}
Taking expectation, we have, by stationarity,
\begin{align*}
  m^{2-4H} n^{-2} &  \sum_{1 \leq k < k" \leq n/m} \esp[C_{k,k'}^2 \mathbf1_{\{b_{k,k'}>0\}} ] \\
  & \leq m^{1-4H} n^{-1} \sum_{k=1}^{n/m} \esp \left[ N^{4H-4}(km)
    \mathbf1_{\{N(km)>0\}} N^2(m) N^2((k-1)m,km] \right] \; .
\end{align*}
Applying H\"older's inequality yields
\begin{align*}
  m^{2-4H} n^{-2} &  \sum_{1 \leq k < k" \leq n/m} \esp[C_{k,k'}^2 \mathbf1_{\{b_{k,k'}>0\}} ]\\
  & \leq m^{-2} \esp^{1/2}[N^4(m)] \frac{m}n \sum_{k=1}^{n/m} \esp^{1/2} \left[
    \{N(km)/km\}^{8H-8} \mathbf1_{\{N(km)>0\}}\right] k^{4H-4} \; .
\end{align*}
By stationarity, $\esp[N^4(m)] = O(m^4)$, and applying Assumption~(\ref{eq:moment-cond-8}), we
obtain, for some constant $c$,
\begin{align*}
  m^{2-4H} n^{-2} & \sum_{1 \leq k < k" \leq n/m} \esp[C_{k,k'}^2 \mathbf1_{\{b_{k,k'}>0\}} ] \leq c \; \frac{m}n
  \sum_{k=1}^{n/m}  k^{4H-4}  = o(1) \; .
\end{align*}
Consider now the event $\{b_{k,k'}=0\}$. Then, using the above notations, we have
\begin{align*}
  \esp[ C_{k,k'}^2 \mathbf1_{\{b_{k,k'}=0\}}] & = \esp[\{a_k^{2H}+a_{k'}^{2H} -
  (a_k+a_{k'})^{2H}\}^2 \mathbf1_{\{b_{k,k'}=0\}}] \leq 4
  \esp[(a_k^{4H}+a_{k'}^{4H}) \mathbf1_{\{b_{k,k'}=0\}}] \; .
\end{align*}
Thus, by stationarity of $N$, we have 
\begin{align*}
  m^{2-4H} n^{-2} \sum_{1 \leq k < k" \leq n/m} & \esp[C_{k,k'}^2
  \mathbf1_{\{b_{k,k'}=0\}} ] \\
  & \leq m^{1-4H} n^{-1}  \sum_{k=1}^{n/m} \esp[N^{4H}(m) \mathbf 1_{\{N(m,km]=0\}}] \\
  & + m^{1-4H} n^{-1} \sum_{k=1}^{n/m} \esp[N^{4H}((k-1)m,km] \mathbf 1_{\{N((k-1)m)=0\}}] \; .
\end{align*}
Applying H\"older's inequality, $\esp[N^2(m)]=O(m^2)$ and stationarity  yields
\begin{align*}
  m^{1-4H} n^{-1} & \sum_{k=1}^{n/m} \esp[N^{4H}(m) \mathbf 1_{\{N(m,km]=0\}}] =
  m^{1-4H} n^{-1} \esp[N^{4H}(m) \mathbf1_{\{N(m,n]=0\}}]  \\
  & \leq m^{1-4H} n^{-1} \esp^{2H}[N^2(m)] \pr^{1-2H}(N(m,n]=0)  \leq C m n^{-1}  \pr^{1-2H}(N(n-m)=0) \; .
\end{align*}
Since $n/m\to\infty$, it holds that $\lim_{n\to\infty}\pr(N(n-m)=0) = 0$. Similarly, 
\begin{align*}
  m^{1-4H} n^{-1} & \sum_{k=1}^{n/m} \esp[N^{4H}((k-1)m,km] \mathbf 1_{\{N((k-1)m)=0\}}] \\
  & \leq m^{1-4H} n^{-1} \esp^{2H}[N^2(m)] \sum_{k=1}^{n/m} \pr^{1-2H}
  (N((k-1)m)=0) \leq \pr^{1-2H}(N(n)=0)
\end{align*}
and this last term tends to 0 as $n$ tends to infinity.  This concludes the
proof of~(\ref{eq:conv-var-con}) and of~\eqref{eq:SmConsistency}.
\end{proof}

\begin{proof}[Proof of Theorem~\ref{theo:ols-strong}]
  The proof is a consequence of the convergence~(\ref{eq:denominator}), the
  decomposition~(\ref{eq:cointegrating-error}), and
  Lemmas~\ref{lem:conv-loi-strong} and~\ref{lem:reste-strong}, whose assumptions
  are those of the Theorem.
\end{proof}

\begin{lemma}
  \label{lem:conv-loi-strong}
  Under the assumptions of Theorem~\ref{theo:ols-strong},
  \begin{gather}
  \label{eq:convergence-BdZ}
  n^{-H-1/2} \sum_{j=1}^n \{y_1^\eta(j)-\theta y_2^\eta(j)\} y_2(j)
  \dlim \Sigma_0 \int_0^1 B(s) \, \mathrm d B_H(s) \; .
\end{gather}
where $B_H$ is a standard fractional Brownian motion independent of $B$ and
\begin{align*}
  \Sigma_0 = (\theta^{-2} \lambda_{1}\sigma_{1,e}^2+\lambda_2 \sigma_{2,e}^2)
  (c_1^2 + \theta^2 c_2^2) \; .
\end{align*}
\end{lemma}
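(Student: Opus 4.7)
Since $\xi_{i,0}=0$, I have $y_i^\eta(j)=\xi_{i,N_i(j)}$, so the assertion reduces to analyzing $\sum_{j=1}^n Z_j\, y_2(j)$ with $Z_j:=\xi_{1,N_1(j)}-\theta\xi_{2,N_2(j)}$. First I would split $y_2(j)=y_2^e(j)+\xi_{2,N_2(j)}$ and handle the cross contribution
\[
\sum_{j=1}^n Z_j\,\xi_{2,N_2(j)} = \sum_{j=1}^n \xi_{1,N_1(j)}\xi_{2,N_2(j)} - \theta \sum_{j=1}^n \xi_{2,N_2(j)}^{2}.
\]
The second sum is $O_P(n)$ by $\sup_k\esp[\xi_{i,k}^2]<\infty$ together with the independence of the counting processes and the microstructure shocks, and the first sum is also $O_P(n)$---it is centered with variance of order $n$, upon conditioning on $(N_1,N_2,\xi_2)$ and using that $\{\xi_{1,k}\}$ is independent of $\{\xi_{2,k}\}$ by Assumption~\ref{as:independence-shocks}. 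Since $n=o(n^{H+1/2})$ for $H>1/2$, this cross contribution is negligible in the strong-cointegration regime; for $H=1/2$ it is merely $O_P(n)=O_P(n^{H+1/2})$, consistent with the $O_P(1)$ claim of Theorem~\ref{theo:ols-strong}.

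It thus remains to treat $T_n := \sum_{j=1}^n Z_j\, y_2^e(j)$. Setting $W_m := \sum_{k=1}^m Z_k$, Assumption~\ref{hypo:microstructure-strong} combined with the independence of $\xi_1$ and $\xi_2$ yields $n^{-H} W_{[n\cdot]}\Rightarrow \sqrt{c_1^2+\theta^2c_2^2}\,B_H$, while the proof of Theorem~\ref{theo:fclt-levels} applied to $y_2^e$ (the purely efficient part) gives $n^{-1/2} y_2^e([n\cdot])\Rightarrow \sqrt{\theta^{-2}\lambda_1\sigma_{1,e}^2+\lambda_2\sigma_{2,e}^2}\,B$ for a standard Brownian motion $B$. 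Under the stronger independence hypotheses of Theorem~\ref{theo:ols-strong}, $B_H$ is a measurable functional of $(\xi_1,\xi_2,N_1,N_2)$ whereas $B$ is a functional of $(e_1,e_2,N_1,N_2)$; conditional on $(N_1,N_2)$ these families are independent, so joint weak convergence holds with $B_H$ and $B$ independent in the limit.

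Then I would apply summation by parts,
\[
T_n = W_n\, y_2^e(n) - \sum_{j=1}^{n-1} W_j\bigl(y_2^e(j+1)-y_2^e(j)\bigr),
\]
and identify the second term, after normalization by $n^{-H-1/2}$, as a stochastic integral of a process converging to $B_H$ against the (approximate) martingale integrator $n^{-1/2}y_2^e([n\cdot])$. A stochastic-integral convergence theorem of Kurtz--Protter type then delivers
\[
n^{-H-1/2}\sum_{j=1}^{n-1}W_j\bigl(y_2^e(j+1)-y_2^e(j)\bigr) \stackrel{d}{\to} \sqrt{\Sigma_0}\int_0^1 B_H(s)\,\mathrm d B(s),
\]
where I abbreviate $\sqrt{\Sigma_0}=\sqrt{(c_1^2+\theta^2c_2^2)(\theta^{-2}\lambda_1\sigma_{1,e}^2+\lambda_2\sigma_{2,e}^2)}$. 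Combining this with the boundary term $n^{-H-1/2}W_n y_2^e(n)\stackrel d\to \sqrt{\Sigma_0}\,B_H(1)B(1)$ and the integration-by-parts identity for the independent pair, $B_H(1)B(1)-\int_0^1 B_H\,\mathrm d B = \int_0^1 B\,\mathrm d B_H$---valid pathwise as a Young integral when $H>1/2$ since $B$ and $B_H$ then have almost sure H\"older exponents summing above one, and directly by independence when $H=1/2$---produces the stated limit.

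The hard part will be justifying the stochastic-integral convergence step: I have to push a continuous-mapping argument through an integrator $y_2^e$ whose increments depend simultaneously on the efficient shocks and on the random clock changes $N_1,N_2$. Verifying the ``good decomposition'' hypothesis---equivalently, uniform tightness of the conditional quadratic variation of $n^{-1/2}y_2^e([n\cdot])$, together with joint Skorokhod convergence of integrand and integrator---is precisely where the Gaussianity of $\{e_{i,k}\}$ and the second-moment bound~\eqref{eq:pp-renforce-2} on the forward recurrence time enter, to decouple the clock from the shock increments sufficiently to pass to the limit in the stochastic integral.
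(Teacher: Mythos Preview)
Your decomposition---splitting off $\sum_j Z_j\,\xi_{2,N_2(j)}=O_P(n)$ and reducing to $T_n=\sum_j Z_j\,y_2^e(j)$---is exactly what the paper does, and the summation-by-parts manoeuvre is likewise present there.  The divergence is at the stochastic-integral step.  Rather than invoke Kurtz--Protter, the paper reindexes by the efficient-shock index, writing for instance
\[
\sum_{j=1}^n \zeta_j \sum_{k=1}^{N_2(j)} e_{2,k}
  = \sum_{k=1}^{N_2(n)} e_{2,k}\{W_n(1)-W_n(t_{2,k}/n)\},
\]
and then exploits the Gaussianity of $\{e_{i,k}\}$ \emph{directly}: conditionally on $\mathcal N=\sigma(N_1,N_2)$ and the $\xi$'s, the sum $\sum_k e_{2,k}W_n(t_{2,k}/n)$ is Gaussian, so its characteristic function equals
\[
\esp\Bigl[\exp\Bigl\{-\tfrac{\sigma_{2,e}^2t^2}{2}\,\tfrac1n\sum_{k=1}^{N_2(n)}\bigl(n^{-H}W_n(t_{2,k}/n)\bigr)^2\Bigr\}\Bigr],
\]
and convergence of the inner Riemann sum follows from Assumption~\ref{hypo:microstructure-strong} and the uniform ergodic limits of Lemma~\ref{lem:dini}.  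No semimartingale machinery is needed.

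Two points about the role of the hypotheses.  First, Gaussianity in the paper is used for this conditional characteristic-function computation, not to verify a UT/good-decomposition condition; the paper in fact remarks after Theorem~\ref{theo:taper-strong} that the Gaussian assumption is presumably dispensable.  Second, the moment bound~\eqref{eq:pp-renforce-2} on the forward recurrence time is not used in this lemma at all---it enters only in Lemma~\ref{lem:reste-strong} to control the $r_{i,j}$ contributions.  Your Kurtz--Protter route is not wrong in principle (with the filtration $\mathcal G_j=\sigma(N_1,N_2,\xi_1,\xi_2)\vee\sigma(e_{i,k}:k\le N_i(j),\ i=1,2)$ one can check that $y_2^e$ is a martingale and $W_j$ is adapted), but you would still owe joint Skorokhod convergence of integrand and integrator together with UT, which you have not supplied; the paper's conditional-Gaussian calculation is both shorter and complete under the stated assumptions.
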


\begin{proof}  [Proof of Lemma~\ref{lem:conv-loi-strong}]
  Denote $S_n = \sum_{j=1}^n \{y_1^\eta(j)-\theta y_2^\eta(j)\} y_2(j)$ and
  write $y_2 = y_2^{e}+y_2^\eta$ with obvious notation. Denote $\zeta_j =
  y_1^\eta(j) - \theta y_2^\eta(j) = \xi_{1,N_1(j)} - \theta
  \xi_{2,N_2(j)}$. Then
  \begin{align}
    S_n = \sum_{j=1}^n \zeta_j y_2^{e}(j) & + \sum_{j=1}^n \zeta_j
    \xi_{2,N_2(j)} \; .  \label{eq:main}
  \end{align}
  By the last part of Assumption~\ref{hypo:microstructure-strong}, the last term
  in the righthand side of~(\ref{eq:main}) is $O_P(n)$.
Consider the first term in the righthand side of~(\ref{eq:main}), say
$S_{1,n}$. Write
  \begin{align*}
    S_{1,n} & = \sum_{j=1}^n \zeta_j \sum_{k=1}^{N_2(j)} e_{2,k} + \theta^{-1}
    \sum_{j=1}^n \zeta_j \sum_{k=1}^{N_1(t_{2,N_2(j)})} e_{1,k}
    \\
    & = \sum_{k=1}^{N_2(n)} e_{2,k} \sum_{ \{j \leq n :\, N_2(j) \geq k\}}
    \zeta_j + \theta^{-1} \sum_{k=1}^{N_1(t_{2,N_2(n)})} e_{1,k}
    \sum_{\{j\leq n: \, N_1(t_{2,N_2(j)}) \geq k\} } \zeta_j    \\
    & = T_{1,n} + \theta^{-1} T_{2,n} \; .
  \end{align*}
  Denote $W_n(t) = \sum_{j=1}^{[nt]} \zeta_j$. Since $N_2(j)<k$ iff
  $j<t_{2,k}$, we obtain
\begin{align*}
  T_{1,n} = y_2^{e_2}(n) W_n(1) - \sum_{k=1}^{N_2(n)} e_{2,k}
  W_n(t_{2,k}/n) \; .
\end{align*}
By Assumption~\ref{hypo:microstructure-strong} and
Theorem~\ref{theo:fclt-levels}, $n^{-1/2-H} y_2^{e_2}(n) W_n(1) \dlim
\sqrt{\lambda_2} \sigma_2 B_2(1) Z(1)$ with $Z = c_1B_H^{(1)} - \theta c_2
B_H^{(2)} \equallaw \sqrt{c_1^2+\theta^2c_2^2} \, B_H$. Let the last term be denoted by $U_n$. Since
the shocks $e_{i,k}$ are i.i.d. Gaussian, we can compute the characteristic
function of~$U_n$.
\begin{align*}
  \esp[\exp\{\mathrm i t n^{-1/2-H}  U_n\} ] & = \esp \left[ \exp
    \left\{ - \frac {\sigma_{2,e}^2 t^2} 2 \frac1n \sum_{k=1}^{N_2(n)} \left(
        n^{-H}  W_n(t_{2,k}/n) \right) ^2 \right\} \right]  \\
  & \to \esp \left[ \exp \left\{ - \frac {\lambda_2\sigma_{2,e}^2 t^2} 2 \int_0^1
      Z^2( s) \, \mathrm d s \right\} \right] \; .
\end{align*}
The convergence is actually joint with that of $n^{-1/2-H} y_2^{e_2}(n) W_n$,
thus we have
\begin{align*}
  n^{-1/2-H}  T_{1,n} \dlim \sqrt{\lambda_2} \sigma_{2,e} B_2(1)
  Z(1) - \sqrt{\lambda_2} \sigma_{2,e} \int_0^1 Z(s) \, \mathrm d B_2(s) \; .
\end{align*}
The limit can also be written as $\sqrt{\lambda_2}
\sigma_{2,e} \int_0^1 B_2(s) \, \mathrm d Z(s)$. Consider now the term
$T_{2,n}$. Note that $N_1(t_{2,N_2(j)})<k$ iff $j \leq t_{2,N_2(t_{1,k})+1}$. Thus
\begin{align*}
  T_{2,n} & = \sum_{k=1}^{N_1(t_{2,N_2(n)})} e_{1,k} W_n(1) -
  \sum_{k=1}^{N_1(t_{2,N_2(n)})} e_{1,k} W_n(t_{2,N_2(t_{1,k})+1}/n) \; .
\end{align*}
By similar arguments as previously, we obtain
\begin{align*}
  n^{-H-1/2} T_{2,n} \dlim \sqrt{\lambda_{1}} \sigma_1 B_1(1) Z(1) -
  \sqrt{\lambda_{1}} \sigma_1 \int_0^1 Z(s) \, \mathrm d B_1(s) \; .
\end{align*}
All convergences hold jointly, thus~(\ref{eq:convergence-BdZ}) holds.
\end{proof}

\begin{lemma}
  \label{lem:reste-strong}
Under the assumptions of Theorem~\ref{theo:ols-strong},
\begin{gather}
  \sum_{j=1}^n \{ r_{1,j} - \theta r_{2,j}\} y_2(j)  = O_P(n) \; ,
\end{gather}
\end{lemma}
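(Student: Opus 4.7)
The strategy is to decompose $S_n := \sum_{j=1}^n (r_{1,j} - \theta r_{2,j}) y_2(j)$ into three pieces controllable by first- and second-moment arguments, exploiting conditional independence given $\mathcal N := \sigma(N_1,N_2)$. Under the theorem's hypotheses, conditionally on $\mathcal N$ the efficient shocks $\{e_{i,k}\}$ remain i.i.d.\ Gaussian and independent of the microstructure shocks $\{\eta_{i,k}\}$, while $\{\xi_{i,k}\}$ is centered with $\sup_k E[\xi_{i,k}^2] < \infty$. Set $a_j := y_2(j) - r_{2,j}$, explicitly
\[
a_j = \sum_{k=1}^{N_2(t_{1,N_1(j)})} e_{2,k} + \theta^{-1}\sum_{k=1}^{N_1(t_{2,N_2(j)})} e_{1,k} + \sum_{k=1}^{N_2(j)} \eta_{2,k},
\]
so that $a_j$ and $r_{2,j}$ share no efficient-shock indices. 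Writing $y_2(j) = a_j + r_{2,j}$ inside $S_n$ yields
\[
S_n = \sum_{j=1}^n r_{1,j}\, y_2(j) \;-\; \theta \sum_{j=1}^n r_{2,j}\, a_j \;-\; \theta \sum_{j=1}^n r_{2,j}^2.
\]

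For the last term, $E[r_{2,j}^2 \mid \mathcal N] = \sigma_{2,e}^2 |J_j|$ with $J_j := (N_2(t_{1,N_1(j)}), N_2(j)]$; the index-swap $\sum_j |J_j| \leq \sum_{k=1}^{N_2(n)}(t_{1,N_1(t_{2,k})+1} - t_{2,k})$ from the proof of Theorem~\ref{theo:ols-weak}, together with (\ref{eq:pp-renforce-2}) applied to the forward recurrence time of $N_1$ at the $N_2$-measurable points $t_{2,k}$, gives $\sum_j r_{2,j}^2 = O_P(n)$. For the first two sums, introduce $I_j := (N_1(t_{2,N_2(j)}), N_1(j)]$: then $r_{1,j}$ depends on $\{e_{1,k} : k \in I_j\}$ and so is conditionally independent of $y_2(j)$ (which uses disjoint $e_{1,k}$ plus $e_2$ and $\eta_2$), while $r_{2,j}$ is conditionally independent of $a_j$ (disjoint $e_{2,k}$, and no shared $e_1$ or $\eta$). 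Since all factors are centered conditionally on $\mathcal N$, both sums have conditional mean zero; it suffices to bound their conditional variances by $O_P(n^2)$ and apply Chebyshev.

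The main obstacle is this variance control, which produces double sums over $(j,j')$ weighted by overlaps of random index sets. Splitting $y_2 = y_2^e + y_2^\eta$ and conditioning on $\mathcal N$, the Gaussian part yields, via Isserlis' formula, the dominant contribution
\[
\sigma_{1,e}^2 \sum_{j,j'} |I_j \cap I_{j'}|\; E[y_2^e(j)\, y_2^e(j') \mid \mathcal N],
\]
where $E[y_2^e(j) y_2^e(j') \mid \mathcal N] = \sigma_{2,e}^2 N_2(j\wedge j') + \theta^{-2}\sigma_{1,e}^2 N_1(t_{2,N_2(j\wedge j')}) = O_P(n)$ uniformly in $(j,j')$, and the critical observation is that $|I_j \cap I_{j'}|$ vanishes unless $|j-j'|$ is at most a forward recurrence time of $N_2$. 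Swapping the order of summation gives
\[
\sum_{j,j'} |I_j \cap I_{j'}| = \sum_k |\{j : k \in I_j\}|^2 \leq \sum_{k=1}^{N_1(n)} (t_{2,N_2(t_{1,k})+1} - t_{1,k})^2,
\]
whose expectation is $O(n)$ by (\ref{eq:pp-renforce-2}) and $E[N_1(n)] = O(n)$; altogether this piece is $O_P(n^2)$. The remaining Isserlis pairing, involving $E[r_{1,j}\, y_2^e(j') \mid \mathcal N]$, is nonzero only when the corresponding index sets overlap and is handled by the same overlap bound, while the $\eta$ contribution $\sigma_{1,e}^2 \sum_{j,j'} |I_j \cap I_{j'}|\, E[\xi_{2,N_2(j)}\xi_{2,N_2(j')} \mid \mathcal N]$ is controlled by $\sup_k E[\xi_{2,k}^2] < \infty$, giving $O_P(n)$. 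A symmetric treatment of $\sum_j r_{2,j} a_j$ yields the same $O_P(n^2)$ variance bound, and combining all three estimates produces $S_n = O_P(n)$.
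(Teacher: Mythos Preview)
Your argument is correct and lands on the same key estimate as the paper, but the organization differs. The paper decomposes $y_2 = y_2^{e_1} + y_2^{e_2} + y_2^\eta$ and, for each piece, first swaps the order of summation to write $\sum_j r_{1,j}(\cdot)_j = \sum_{k} e_{1,k} V_k$; the conditional second moment then collapses to $\sigma_{1,e}^2 \sum_k \esp[V_k^2 \mid \mathcal N]$ by ordinary (or, for the $y_2^{e_1}$ piece, martingale-type) orthogonality of the $e_{1,k}$, with no appeal to Isserlis. You instead attack the double sum $\sum_{j,j'}$ directly via Isserlis' identity, leaning on the Gaussian hypothesis that Theorem~\ref{theo:ols-strong} already imposes. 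Both routes meet at the same overlap bound $\sum_{j,j'} |I_j \cap I_{j'}| = \sum_k |\{j : k \in I_j\}|^2 \le \sum_k (A_2(t_{1,k})+1)^2 = O_P(n)$. Two small remarks: your explicit extraction of the diagonal $\sum_j r_{2,j}^2$ is in fact needed---the paper's ``similarly'' for $r_{2,j}$ hides exactly this step, since $r_{2,j}$ shares $e_2$-indices with $y_2^{e_2}(j)$---so your decomposition is a clean way to expose it; and the cross Isserlis pairing $\esp[r_{1,j} y_2^e(j') \mid \mathcal N]\,\esp[r_{1,j'} y_2^e(j) \mid \mathcal N]$ actually vanishes identically (for $j\ne j'$ one factor is zero by index disjointness, and for $j=j'$ both are), so no bound is required there. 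The paper's swap-then-orthogonality device is marginally more elementary in that it would survive dropping Gaussianity, whereas your Isserlis computation would not; on the other hand your approach is more systematic and makes the role of the overlap count $|I_j\cap I_{j'}|$ transparent.
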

\begin{proof}[Proof of Lemma~\ref{lem:reste-strong}]
We first study the term with $r_{1,j}$ and split it into three parts.
  \begin{align*}
    \sum_{j=1}^n r_{1,j} y_2(j) = \sum_{j=1}^n r_{1,j} y_2^{e_1}(j) + \sum_{j=1}^n
    r_{1,j} y_2^{e_2} (j) + \sum_{j=1}^n r_{1,j} y_2^\eta(j)
  \end{align*}
  We start with the last one. Recall that $N_1(t_{2,N_2(j)}) < k \leq N_1(j)$
  iff $t_{1,k} \leq j \leq t_{1,k} + A_2(t_{1,k})$.  Thus
\begin{align}
  \sum_{j=1}^n r_{1,j} y_2^\eta(j) & = \sum_{j=1}^n \xi_{2,N_2(j)}
  \sum_{N_1(t_{2,N_2(j)}) < k \leq N_1(j)} e_{1,k} \; . \label{eq:decomp-r1}
\end{align}
If the microstructure shocks are independent of the counting processes, then
\begin{multline*}
  \esp \left[ \left(\sum_{j=1}^n e_{1,k} \sum_{t_{1,k} \leq j < t_{1,k} +
        A_2(t_1,k)} \xi_{2,N_2(j)} \right)^2 \mid \mathcal N \right] \\
  = \sigma_{1,e}^2 \sum_{k=1}^{N_1(n)} \esp \left[ \left( \sum_{t_{1,k} \leq j
        < t_{1,k} + A_2(t_1,k)} \xi_{2,N_2(j)} \right)^2 \mid \mathcal N
  \right] \leq C \sum_{k=1}^{N_1(n)} (A_2(t_{1,k})+1)^2 \sup_\ell \esp[\xi_{2,\ell}^2] \; .
\end{multline*}
Conditioning on $N_1$ and then taking expectation yields
\begin{align*}
  \esp \Big [ \Big (\sum_{j=1}^n e_{1,k} \sum_{t_{1,k} \leq j < t_{1,k} +
    A_2(t_1,k)} \xi_{2,N_2(j)} \Big )^2 \Big ] & \leq C \esp[N_1(n)] \sup_t
  \esp[\{1+A_2(t)\}^2] \sup_\ell \esp[\xi_\ell^2] = O(n) \; .
\end{align*}
Consider now $R_{2,n} = \sum_{j=1}^n r_{1,j} y_2^{e_2}(j)$.
\begin{align*}
  R_{2,n} & = \sum_{j=1}^n y_2^{e_2}(j) \sum_{N_1(t_{2,N_2(j)})+1}^{N_1(j)}
  e_{1,k} = \sum_{k=1}^{N_1(n)} e_{1,k} \sum_{t_{1,k} \leq j <
    t_{1,k}+A_2(t_{1,k})} y_2^{e_2}(j) \; .
\end{align*}
By independence of the efficient shocks and the counting processes, we have
\begin{align*}
  \esp[R_{2,n}^2 \mid \mathcal N] \leq C N_1(n) \sum_{k=1}^{N_1(n)}
  (A_2(t_{1,k})+1)^2 = O_P(n^2) \; .
\end{align*}
This proves that $R_{2,n} = O_P(n)$. Consider finally $R_{1,n} = \sum_{j=1}^n
r_{1,j} y_2^{e_1}(j)$. By definition, $e_{1,k}$ is independent of
$y_2^{e_1}(j)$ for $j$ such that $N_1(t_{2,N_2(j)}) <k$. Thus, we can compute
the conditional variance given $\mathcal N$.
\begin{align*}
  \esp[ R_{1,n}^2 \mid \mathcal N] & = \sigma_{1,e}^2 \sum_{k=1}^{N_1(n)}
  \esp\left[ \left( \sum_{t_{1,k} \leq j < t_{1,k}+A_2(t_{1,k})} y_2^{e_1}(j)
    \right)^2 \mid \mathcal N \right] \\
  & \leq C N_2(n) \sum_{k=1}^{N_1(n)} (A_2(t_{1,k})+1)^2 = O_P(n)
\end{align*}
by~\eqref{eq:pp-renforce-2}.  This concludes the proof of
Lemma~\ref{lem:reste-strong}.
\end{proof}

\subsection{Proof of Theorems~\ref{theo:taper-weak} and~\ref{theo:taper-strong}}
\label{subsec:prooftaper}
Write
\begin{align*}
  \tilde \theta_n = \theta + \frac{\sum_{\ell=1}^m d_{\Delta r,\ell} \, \bar
    d_{\Delta y_2,\ell}}{\sum_{\ell=1}^m |d_{\Delta y_2,\ell}|^2} +
  \frac{\sum_{\ell=1}^m d_{\Delta y^\eta,\ell} \, \bar d_{\Delta
      y_2,\ell}}{\sum_{\ell=1}^m |d_{\Delta y_2,\ell}|^2} \;
\end{align*}
with $y^\eta(j) = y_1^\eta(j) - \theta y_2^\eta(j)$, $r(j) = r_1(j) -\theta
r_2(j)$ and
\begin{align*}
  r_1(j) = \sum_{k=N_1(t_{2,N_2(j)})+1}^{N_1(j)} e_{1,k} \; , \ \ \ r_{2}(j) =
  \sum_{k=N_2(t_{1,N_1(j)})+1}^{N_2(j)} e_{2,k} \; .
\end{align*}
By summation by parts, since $h(0)=h(1)=0$, for any time series $\{x_j\}$, we
can write
\begin{align}
  \label{eq:summation-by-parts}
  d_{\Delta x,\ell} = \sum_{j=0}^{n-1} \{h_\ell(j/n) - h_\ell((j+1)/n) \} x_j =
  -\frac 1n \sum_{j=0}^{n-1} w_\ell(j,n) x_j
\end{align}
with  $w_\ell(j,n) = n\{h_\ell((j+1)/n) - h_\ell(j/n)\}$.
Applying~(\ref{eq:summation-by-parts}) to $y_2$ yields
\begin{align*}
  d_{\Delta y_2,\ell} = - \frac1n \sum_{j=0}^{n-1} w_\ell(j,n) y_2(j) \; .
\end{align*}
Since the assumptions of Theorems~\ref{theo:taper-weak}
and~\ref{theo:taper-strong} imply those of Theorem~\ref{theo:fclt-levels}, the
Continuous Mapping Theorem yields
\begin{align}
  \label{eq:continuous-mapping-taper}
  \{ n^{-1/2} d_{\Delta y_2,\ell}, 1 \leq \ell \leq m\} \dlim \left\{- \Sigma_e
    \int_0^1 h_\ell'(s) B(s) \, \mathrm d s \; , 1 \leq \ell \leq m \right\}
\end{align}
where $B$ is a standard Brownian motion and $\Sigma_e^2 = \theta^{-2}
\lambda_{1}\sigma_{1,e}^2+\lambda_2 \sigma_{2,e}^2$.  By integration by parts,
the integral can also be expressed as
\begin{align*}
  - \int_0^1 h_\ell'(s) B(s) \, \mathrm d s = \int_0^1 h_\ell(s) \, \mathrm d
  B(s) \; .
\end{align*}
This in turn implies
\begin{align}
  \label{eq:denominator-taper}
  n^{-1} \sum_{\ell=1}^m |d_{\Delta y_2,\ell}|^2 \dlim \Sigma_e^2 \sum_{\ell=1}^m
  \left| \int_0^1 h_\ell(s) \, \mathrm d B(s) \right|^2 \; .
\end{align}
Applying now~(\ref{eq:summation-by-parts}) to $y^\eta$ we obtain
\begin{align*}
  d_{\Delta y^\eta,\ell} = - \frac 1n \sum_{j=0}^{n-1} w_\ell(j,n) \{
  y_1^\eta(j)-\theta y_2^\eta(j)\} \; .
\end{align*}
In the case of weak fractional cointegration, we apply
Assumption~\ref{hypo:microstructure-weak}, the Continuous Mapping Theorem and
integration by parts to obtain
\begin{align}
  \label{eq:numerator-taper-weak}
  n^{-H} \ell(n) d_{\Delta y^\eta,\ell} = -n^{-1-H} \ell(n) \sum_{j=0}^{n-1}
  w_\ell(j,n) \{y_1^\eta(j)-\theta y_2^\eta(j)\} \dlim \int_0^1 h_\ell(t) \,
  \mathrm d Z_H(t) \;
\end{align}
where, by independence of $B_H^{(1)}$ and $B_H^{(2)}$,
$$
Z_H(t) = c_1 B_H^{(1)}(\lambda_1t) - \theta c_2 B_H^{(2)}(\lambda_2t) \equallaw
\sqrt{\lambda_1^{2H} c_1^2 + \lambda_2^{2H} \theta^2 c_2^2 } \, B_H
$$
and $B_H$ is a standard fractional Brownian motion.  The first part of
Lemma~\ref{lem:reste-taper} shows that $d_{\Delta r,\ell}$ is negligible under
the assumptions of Theorem~\ref{theo:taper-weak}.  This, and the
convergences~(\ref{eq:continuous-mapping-taper}), (\ref{eq:denominator-taper})
and~(\ref{eq:numerator-taper-weak}) conclude the proof of
Theorem~\ref{theo:taper-weak}. \hfill \qed

We now prove Theorem~\ref{theo:taper-strong}. Since
$h_\ell(0)=h_\ell(1)=0$, we have $\sum_{j=0}^{n-1} w_{\ell}(j,n) = 0$,
hence
\begin{align*}
  \sum_{j=0}^{n-1} w_\ell(j,n) y_i^\eta(j) = \sum_{j=0}^{n-1} w_\ell(j,n)
 ( \xi_{i,N_i(j)}-\xi_{i,0}) = \sum_{j=0}^{n-1} w_\ell(j,n)  (\xi_{i,N_i(j)}-\mu_i^*) \; .
\end{align*}
Denote $S_{i,0}=0$ and for $k\geq1$, $S_{i,k} = \sum_{j=1}^k
(\xi_{i,N_i(j)}-\mu_i^*)$.  Define $\omega_\ell(j,n) = n \{w_\ell(j+1,n) -
w_\ell(j,n)\}$.  Applying again summation by parts, we have
\begin{align*}
  \sum_{j=0}^{n-1} w_\ell(j,n) y_i^\eta(j) & = - \frac1n \sum_{j=1}^{n-1}
  \omega_\ell(j,n) S_{i,j} + w_\ell(n,n) S_{i,n-1} + w_\ell(0,n) (\xi_{i,0}-\mu_i^*) \; ,
\end{align*}
Under Assumption~\ref{hypo:microstructure-strong}, by the
Continuous Mapping Theorem, we obtain
\begin{align}
  n^{1-\gamma} \ell(n) d_{\Delta y^\eta,\ell} & = - n^{-\gamma} \ell(n)
  \sum_{j=1}^n w_\ell(j,n) y_i^\eta(j) \nonumber \\
&  \dlim \int_0^1 h_\ell^{\prime\prime}(t) B_H^{(i)}(t) \, \mathrm d t - h'(1)
  B_H^{(i)}(1) \equallaw - \int_0^1 h_\ell'(s) \, \mathrm d B_H^{(i)}(s) \; .
  \label{eq:numerator-taper-strong}
\end{align}
The second part of Lemma~\ref{lem:reste-taper} implies that the term $d_{\Delta
  r,\ell}$ does not contribute to the limit under the Assumptions of
Theorem~\ref{theo:taper-strong}. This, and the
convergences~(\ref{eq:continuous-mapping-taper}), (\ref{eq:denominator-taper})
and~(\ref{eq:numerator-taper-strong}) conclude the proof of
Theorem~\ref{theo:taper-strong}. \hfill \qed

\begin{lemma}
 \label{lem:reste-taper}
 Under the assumptions of Theorem~\ref{theo:taper-weak}, then $d_{\Delta r,\ell}
 = O_P(1)$. Under the assumptions of Theorem~\ref{theo:taper-strong}, then
 $d_{\Delta r,\ell} = O_P(n^{-1/2})$.
\end{lemma}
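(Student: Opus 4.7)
The plan is to reduce both statements to the same summation-by-parts identity and then differentiate the two cases only at the final moment estimate. First I would apply~(\ref{eq:summation-by-parts}) to $r(j)=r_1(j)-\theta r_2(j)$ to rewrite $d_{\Delta r,\ell} = -n^{-1}\sum_{j=0}^{n-1}w_\ell(j,n)\,r(j)$, noting that for each fixed Fourier index $\ell$ the weight $w_\ell(j,n)$ is bounded by a constant $C_\ell$ depending only on $h$ and $\ell$. It then suffices to handle the $r_1$ and $r_2$ contributions separately and symmetrically.

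The next step is to invert the order of summation exactly as in the proof of Theorem~\ref{theo:ols-weak}: the condition $N_1(t_{2,N_2(j)})<k\leq N_1(j)$ is equivalent to $t_{1,k}\leq j\leq t_{1,k}+A_2(t_{1,k})$, where $A_2(t)=t_{2,N_2(t)+1}-t$ is the forward recurrence time of $N_2$. This lets me write $\sum_{j} w_\ell(j,n)\,r_1(j) = \sum_{k=1}^{N_1(n)} e_{1,k}\,W_{k,\ell}$, where $W_{k,\ell}$ is a window sum of at most $A_2(t_{1,k})+1$ weights, hence $\mathcal N$-measurable (with $\mathcal N$ the $\sigma$-field generated by $N_1,N_2$) and satisfying $|W_{k,\ell}|\leq C_\ell(A_2(t_{1,k})+1)$. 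For the weak case I would bound the first absolute moment conditionally on $\mathcal N$, using independence of $\{e_{1,k}\}$ from $\mathcal N$, mutual independence of $N_1$ and $N_2$, together with~(\ref{eq:pp-renforce}) and $\esp[N_1(n)]=O(n)$, to obtain $\esp[|\sum_k e_{1,k}W_{k,\ell}|]=O(n)$; dividing by $n$ yields $d_{\Delta r_1,\ell}=O_P(1)$.

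For the strong case, the sharper rate comes from the i.i.d.\ zero-mean structure of $\{e_{1,k}\}$: conditionally on $\mathcal N$, $\sum_k e_{1,k}W_{k,\ell}$ is a sum of independent mean-zero random variables with conditional variance $\sigma_{1,e}^2\sum_k W_{k,\ell}^2\leq C_\ell^2\sigma_{1,e}^2\sum_k(A_2(t_{1,k})+1)^2$. Taking expectation and invoking~(\ref{eq:pp-renforce-2}) in place of~(\ref{eq:pp-renforce}) bounds the unconditional variance by $O(n)$, so that $\sum_j w_\ell(j,n)r_1(j)=O_P(\sqrt n)$ and $d_{\Delta r_1,\ell}=O_P(n^{-1/2})$. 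The analogous argument with the roles of $N_1$ and $N_2$ exchanged handles $r_2$ in both cases. The only real obstacle is bookkeeping: keeping the two counting processes straight through the index swap, and verifying that the conditional first (resp.\ second) moment bound reduces exactly to hypothesis~(\ref{eq:pp-renforce}) (resp.~(\ref{eq:pp-renforce-2})); once the summation has been inverted, the estimate is a routine finite-moment calculation conditional on the counting processes.
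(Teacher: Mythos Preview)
Your argument is essentially the paper's: apply~(\ref{eq:summation-by-parts}), invert the double sum via the forward-recurrence-time identity, and bound the resulting inner window by $C_\ell(A_2(t_{1,k})+1)$, using~(\ref{eq:pp-renforce}) for a first-moment bound and~(\ref{eq:pp-renforce-2}) for a second-moment bound. One small technical slip: you pass to \emph{unconditional} moments via $\esp[N_1(n)]=O(n)$, but the stated assumptions only give $N_1(n)/n\stackrel P\to\lambda_1$, which does not imply $\esp[N_1(n)]=O(n)$; the paper instead conditions on $N_1$, applies~(\ref{eq:pp-renforce}) or~(\ref{eq:pp-renforce-2}) to $A_2$ (using mutual independence of $N_1$ and $N_2$), and concludes that the conditional $q$-th moment is bounded by $C\,N_1(n)=O_P(n)$, from which $O_P(n^{1/q})$ follows---so replace your unconditional $O(n)$ by $O_P(n)$ and the proof goes through verbatim.
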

\begin{proof}
  Applying~(\ref{eq:summation-by-parts}) to $r$, we see that we only need to
  prove that the independence between the counting processes and the efficient
  shocks and~\eqref{eq:pp-renforce} implies that $\sum_{j=1}^n w_\ell(j,n)
  r_{i,j} = O_p(n)$ and~\eqref{eq:pp-renforce-2} implies that $\sum_{j=1}^n
  w_\ell(j,n) r_{i,j} = O_p(n^{1/2})$. We start with $r_1$.
  \begin{align*}
    \sum_{j=1}^n w_\ell(j,n) r_{1,j} = \sum_{k=1}^{N_1(n)} e_{1,k}
    \sum_{t_{1,k} \leq j < t_{1,k}+A_2(t_{1,k})} w_\ell(j,n) \; .
  \end{align*}
 Taking conditional expectation yields, for $q=1,2$,
  \begin{align*}
    \esp\left[ \left| \sum_{j=1}^n w_\ell(j,n) r_{1,j} \right|^q \mid
      \mathcal N \right]  \leq C \sum_{k=1}^{N_1(n)} (A_2(t_{1,k})+1)^q  \; .
  \end{align*}
  Applying~\eqref{eq:pp-renforce} if $q=1$ and~\eqref{eq:pp-renforce-2} if $q=2$
  shows that the last term is $O_P(n)$.  This proves that $\sum_{j=1}^n
  w_\ell(j,n) r_{1,j} = O_P(n)$ under the assumptions of Theorem~\ref{theo:taper-weak}
  and $O_P(\sqrt n)$ under the assumptions of Theorem~\ref{theo:taper-strong}. The term
  $\sum_{j=1}^n w_\ell(j,n) r_{2,j}$ is dealt with similarly.
\end{proof}

\subsection{Proof of Theorem~\ref{theo:strong-continuous-taper}}
Write
\begin{align*}
  \tilde \theta_{n,\delta} = \theta + \frac{\sum_{\ell=1}^m d_{\Delta \tilde r,\ell}
    \bar d_{\Delta \tilde y_2,\ell}}{\sum_{\ell=1}^m |d_{\Delta \tilde
      y_2,\ell}|^2} + \frac{\sum_{\ell=1}^m d_{\Delta \tilde y^\eta,\ell} \bar
    d_{\Delta \tilde y_2,\ell}}{\sum_{\ell=1}^m |d_{\Delta \tilde y_2,\ell}|^2}
\end{align*}
with $\tilde y(j) = \tilde y_1^\eta(j) - \theta \tilde y_2^\eta(j)$, $\tilde
r(j) = \tilde r_1(j) - \theta \tilde r_2(j)$ and
\begin{align*}
  r_1(s) & = \sum_{k=N_1(t_{2,N_2(s)})+1}^{N_1(s)} e_{1,k} \; , \ \ \ r_2(s) =
  \sum_{k=N_2(t_{1,N_1(s)})+1}^{N_2(s)} e_{2,k} \; .
\end{align*}
and the DFT is defined as in~(\ref{eq:taperedDFT}). Applying summation by parts as
in~(\ref{eq:summation-by-parts}), we obtain
\begin{align*}
  d_{\Delta \tilde y_2,\ell} = - \frac1n \sum_{j=0}^{n-1} w_\ell(j,n) \tilde
  y_2(j) = - \frac1n \int_0^{n\delta} w_\ell(\lceil s/\delta\rceil,n) y_2(s) \,
  \mathrm d s = - \int_0^{\delta} w_\ell(\lceil nt/\delta\rceil,n) y_2(ns) \,
  \mathrm d t \; ,
\end{align*}
with $w_\ell(j,n) = n\{h_\ell((j+1)/n) - h_\ell(j/n)\}$ as before, and $\lceil t
\rceil$ is the smallest integer larger than or equal to $t$.  This yields
\begin{align*}
  n^{-1/2} d_{\tilde y_2,\ell} & \dlim - \Sigma_e \int_0^1 h_\ell'(s)
  B(\delta s) \, \mathrm d s \equallaw \Sigma_e \int_0^1 h_\ell(s) \,
  \mathrm d B(s) \; .
\end{align*}
Since $\eta_j = \xi_j - \xi_{j-1}$, we have
\begin{align*}
  \tilde y_i^\eta(j) = \int_{(j-1)\delta}^{j\delta} \xi_{i,N_i(s)} \, \mathrm d
  s - \delta \xi_{i,0} \; .
\end{align*}
Differencing cancels the term
$\delta\xi_0$. Applying~(\ref{eq:summation-by-parts}) and summation by parts and
the property that $\sum_{j=0}^{n-1}w_\ell(j,n)=0$, we obtain
\begin{align*}
  d_{\Delta\tilde y_i^\eta,\ell} & = - \frac1n \sum_{j=0}^{n-1} w_\ell(j,n)
  \int_{(j-1)\delta}^{j\delta} \xi_{i,N_i(s)} \, \mathrm d s = - \frac1n
  \sum_{j=0}^{n-1} w_\ell(j,n) \int_{(j-1)\delta}^{j\delta} \{ \xi_{i,N_i(s)} - \mu_i^*\}
  \, \mathrm d s   \\
  & = \frac 1 {n^2} \sum_{j=1}^{n-1} \omega_\ell(j,n) \int_{0}^{j\delta}
   \{ \xi_{i,N_i(s)} - \mu_i^*\} \, \mathrm d s - \frac1n w_\ell(n,n) \int_0^{(n-1)\delta}
   \{ \xi_{i,N_i(s)} - \mu_i^*\} \, \mathrm d s \; .
\end{align*}
Under Assumption~\ref{hypo:microstructure-strong-taper-continuous}, we thus
have, with $Z= B_H^{(1)}-\theta B_H^{(2)}$,
\begin{align*}
  n^{1-H} \{d_{\Delta \tilde y_1^\eta, \ell} - \theta d_{\Delta \tilde
    y_2^\eta,\ell} \} \dlim \int_0^1 h_\ell''(s) Z(\delta s) \, \mathrm d s -
  h'(1) Z(\delta) \; .
\end{align*}
We must now deal with the remaining terms of the cointegrating error. If
$H>1/2$, Lemma~\ref{lem:strong-standard} implies that the term $d_{\Delta\tilde
  r,\ell}$ does not contribute to the limit. If $H=1/2$, both terms are of the
same order.  This concludes the proof of
Theorem~\ref{hypo:microstructure-strong-taper-continuous}. \hfill \qed

\begin{lemma}
   \label{lem:strong-standard}
   Under the assumptions of Theorem~\ref{theo:strong-continuous-taper}
  \begin{align*}
    d_{\Delta \tilde r_i,\ell} = O_P(n^{-1/2}) \; .
  \end{align*}
\end{lemma}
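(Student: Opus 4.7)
The plan is to proceed exactly in the spirit of Lemma~\ref{lem:reste-taper}, replacing the discrete sum by the corresponding integral. Applying summation by parts as in~(\ref{eq:summation-by-parts}) and then turning the sum over $j$ into an integral in the same way as was done just above for $d_{\Delta \tilde y_2,\ell}$ yields
\begin{align*}
  d_{\Delta \tilde r_i,\ell} = -\frac1n \int_0^{n\delta} w_\ell(\lceil s/\delta\rceil,n) \, r_i(s) \, \mathrm d s,
\end{align*}
where, by smoothness of the taper $h$, the discrete derivative $w_\ell(j,n)$ is uniformly bounded in $j,n$ by a constant $C_{h,\ell}$. Thus it suffices to show that the integral on the right-hand side is $O_P(\sqrt n)$.

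I would next reverse the order of summation and integration as in the proof of Lemma~\ref{lem:reste-taper}. Using the equivalence $N_1(t_{2,N_2(s)}) < k \leq N_1(s) \Longleftrightarrow t_{1,k} \leq s < t_{1,k} + A_2(t_{1,k})$ with $A_2(t) = t_{2,N_2(t)+1}-t$ the forward recurrence time of $N_2$, I rewrite
\begin{align*}
  \int_0^{n\delta} w_\ell(\lceil s/\delta\rceil,n) \, r_1(s) \, \mathrm d s = \sum_{k=1}^{N_1(n\delta)} e_{1,k} \, W_k^{(n,\ell)}, \qquad W_k^{(n,\ell)} := \int_{t_{1,k}}^{(t_{1,k}+A_2(t_{1,k}))\wedge n\delta} w_\ell(\lceil s/\delta\rceil,n) \, \mathrm d s,
\end{align*}
so that $|W_k^{(n,\ell)}| \leq C_{h,\ell} \, A_2(t_{1,k})$. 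Conditioning on the $\sigma$-field $\mathcal N$ generated by the counting processes and using that the efficient shocks are i.i.d., centered, with variance $\sigma_{1,e}^2$ and independent of $\mathcal N$,
\begin{align*}
  \esp\!\left[ \left| \sum_{k=1}^{N_1(n\delta)} e_{1,k} W_k^{(n,\ell)} \right|^2 \, \Big| \, \mathcal N \right] = \sigma_{1,e}^2 \sum_{k=1}^{N_1(n\delta)} (W_k^{(n,\ell)})^2 \leq C \sum_{k=1}^{N_1(n\delta)} A_2(t_{1,k})^2.
\end{align*}
A further conditioning on $N_1$ and the bound $\sup_{t\geq0} \esp[A_2(t)^2] \leq C$ supplied by~(\ref{eq:pp-renforce-2}) give $\esp\bigl[\sum_{k=1}^{N_1(n\delta)} A_2(t_{1,k})^2\bigr] \leq C \, \esp[N_1(n\delta)] = O(n)$, hence $\int_0^{n\delta} w_\ell(\lceil s/\delta\rceil,n) r_1(s) \, \mathrm d s = O_P(\sqrt n)$, and therefore $d_{\Delta \tilde r_1,\ell} = O_P(n^{-1/2})$. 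The term $d_{\Delta \tilde r_2,\ell}$ is treated by the symmetric argument, with $A_1(t) = t_{1,N_1(t)+1}-t$ in place of $A_2$.

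There is essentially no genuine obstacle; the argument is a continuous-time reprise of Lemma~\ref{lem:reste-taper}. The only points requiring some care are (i) the uniform boundedness of $w_\ell(j,n)$, which follows from the smoothness of the taper and is already used implicitly elsewhere in this section, and (ii) the truncation by $n\delta$ in the upper limit of the integral defining $W_k^{(n,\ell)}$, which is harmless because the pointwise bound $|W_k^{(n,\ell)}| \leq C_{h,\ell} A_2(t_{1,k})$ is unaffected.
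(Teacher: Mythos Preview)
Your argument is correct and follows the same route as the paper's own proof: summation by parts, reversal of the order of summation and integration via the equivalence $N_1(t_{2,N_2(s)}) < k \leq N_1(s) \Leftrightarrow t_{1,k} \leq s < t_{1,k}+A_2(t_{1,k})$, and then a conditional second-moment bound using independence of the efficient shocks from $\mathcal N$ together with~(\ref{eq:pp-renforce-2}). The only cosmetic difference is that the paper keeps the $1/n$ factor with the integral (via the change of variable $t=s/n$) and stops at the $O_P(n^{-1})$ bound for the conditional variance, whereas you pull the $1/n$ outside and take one more expectation over $N_1$; both lead to the same conclusion.
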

\begin{proof}
 Applying as usual summation by parts, we obtain
\begin{align}
  d_{\Delta\tilde r_1,\ell} & = - \frac 1n \sum_{k=1}^{N_1(n\delta)} e_{1,k}
  \sum_{j=1}^n w_\ell(j,n) \int_{(j-1)\delta}^{j\delta} \mathbf1_{\{t_{1,k}
    \leq s < t_{1,k} + A_2(t_{1,k})\}} \, \mathrm d s \nonumber
  \\
  & = -\frac 1n \sum_{k=1}^{N_1(n\delta)} e_{1,k} \int_{0}^{n\delta} w_\ell
  (\lceil s/\delta \rceil, n) \mathbf1_{\{t_{1,k} \leq s < t_{1,k} +
    A_2(t_{1,k})\}} \, \mathrm d s \nonumber
  \\
  & = -\frac 1n \sum_{k=1}^{N_1(n\delta)} e_{1,k} \int_{t_{1,k} \wedge (n\delta)}^{\{t_{1,k} +
    A_2(t_{1,k})\} \wedge (n\delta)} w_\ell (\lceil s/\delta \rceil, n) \,
  \mathrm d s   \nonumber
  \\
  & = - \sum_{k=1}^{N_1(n\delta)} e_{1,k} \int_{(t_{1,k}/n)\wedge \delta}^{\{(t_{1,k} +
    A_2(t_{1,k}))/n\} \wedge \delta} w_\ell (\lceil nt/\delta \rceil, n) \,
  \mathrm d t \; . \label{eq:the-expression}
  \end{align}
  Taking conditional expectation and applying~(\ref{eq:pp-renforce-2}), we obtain
\begin{align*}
  \esp \left[ |d_{\Delta \tilde r_1,\ell}|^2 \mid \mathcal N \right] \leq \frac
  C {n^2} \sum_{k=1}^{N(n)} A_2^2(t_{1,k}) = O_P(n^{-1}) \; .
\end{align*}

\end{proof}

\subsection{Additional Lemmas}
\label{sec:additional}

\begin{lemma}
  \label{lem:moment-frt}
  If the durations $t_{i,k}-t_{i,k-1}$ form a stationary ergodic sequence with
  finite moment of order $2p+1$, if $\pr(t_{i,1}>0)=1$ and if the associated
  point process has finite intensity, then
  \begin{align*}
    \sup_{s\geq0} \esp[(t_{i,N_i(s)+1}-s)^p] < \infty \; .
  \end{align*}
\end{lemma}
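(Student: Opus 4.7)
The plan is to drop the $i$ subscript and reduce the claim to $\sup_{s\geq0}\esp[\tau_{N(s)+1}^p] < \infty$, using the elementary pathwise bound $A(s):=t_{N(s)+1}-s\leq\tau_{N(s)+1}$, where $\tau_{N(s)+1}$ is the length of the (unique) interval of the point process containing $s$.

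I would then decompose along the partition $\{t_{k-1}\leq s<t_{k}\}_{k\geq1}$ to write
\begin{equation*}
\esp[\tau_{N(s)+1}^p] = \sum_{k=1}^\infty \esp\bigl[\tau_k^p \mathbf{1}\{t_{k-1}\leq s<t_k\}\bigr],
\end{equation*}
and exploit that on $\{t_{k-1}\leq s<t_{k}\}$ one has $\tau_{k}>s-t_{k-1}$. The key analytic device is the Markov-type inequality $\tau_{k}^{p}\mathbf{1}\{\tau_{k}>u\}\leq \tau_{k}^{2p+1}/u^{p+1}$ (valid for $u>0$), applied with $u=s-t_{k-1}$; this is precisely the point where the hypothesis $\esp[\tau_{1}^{2p+1}]<\infty$ enters, and the choice of exponent $p+1$ in the denominator is dictated by the need to produce a convergent tail integral.

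I would then split each summand into a bulk contribution ($t_{k-1}\leq s-1$) and a boundary contribution ($s-1<t_{k-1}\leq s$, equivalently $B(s):=s-t_{N(s)}<1$). For the bulk, the Markov bound turns each summand into $\esp[\tau_{k}^{2p+1}(s-t_{k-1})^{-(p+1)}\mathbf{1}\{t_{k-1}\leq s-1\}]$; summing over $k$ by a Campbell-type identity that uses stationarity of $\{\tau_{k}\}$ together with the finite-intensity hypothesis ($\esp[dN(u)]\leq \lambda\,du$) produces a bound comparable to $\esp[\tau_{1}^{2p+1}]\int_{1}^{s}v^{-(p+1)}\,dv\leq \esp[\tau_{1}^{2p+1}]/p$, uniform in $s$. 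A dyadic shell decomposition in $s-t_{k-1}$ gives the same bound more concretely, shell $n$ contributing $O(2^n)\cdot \esp[\tau_{1}^{2p+1}]/2^{n(p+1)}=O(2^{-np})$. For the boundary, the expected cardinality of $\{k:t_{k-1}\in(s-1,s]\}$ is $\esp[N(s)-N(s-1)+1]$, which is bounded uniformly in $s$ by finite intensity; a direct Cauchy--Schwarz estimate using the $2p$-moment of $\tau_{1}$ (available under the $(2p+1)$-hypothesis) then bounds this contribution uniformly.

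The principal obstacle is the dependence between $\tau_{k}$ and $t_{k-1}=\tau_{1}+\cdots+\tau_{k-1}$ inherent to a stationary, non-i.i.d.\ duration sequence, which precludes the straightforward Palm/Campbell decoupling available in the renewal setting. The Markov bound above is the mechanism that factorizes each contribution into a $\tau_{k}^{2p+1}$-piece and a deterministic kernel $(s-t_{k-1})^{-(p+1)}$, so that the joint expectation can be controlled by combining stationarity of $\{\tau_{k}\}$ with finite intensity of $N$, without requiring any mixing assumption or conditional moment bound beyond $\esp[\tau_{1}^{2p+1}]<\infty$.
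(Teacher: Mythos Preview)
Your sketch has a genuine gap at the step you label a ``Campbell-type identity.'' After the Markov bound you must control sums of the form $\sum_k \esp\bigl[\tau_k^{2p+1}\,h(t_{k-1})\bigr]$ with $h(u)=(s-u)^{-(p+1)}\mathbf1\{u\le s-1\}$ (bulk) or $h(u)=\mathbf1\{u\in(s-1,s]\}$ (boundary). You assert this is comparable to $\esp[\tau_1^{2p+1}]\int h(u)\,\lambda\,du$, but no such decoupling is available: $\tau_k$ and $t_{k-1}=\tau_1+\cdots+\tau_{k-1}$ are dependent, and the Markov bound only shifts exponents---after it you still have the product of two correlated random factors $\tau_k^{2p+1}\cdot(s-t_{k-1})^{-(p+1)}$ inside the expectation, so nothing has been ``factorized.'' A Campbell formula would give the claimed bound only if the mark $\tau_k$ were independent of the location $t_{k-1}$ (the i.i.d.\ renewal case), or if the \emph{point process} were stationary under $\pr$ with Palm mark law equal to that of $\tau_1$. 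Here it is the durations that are stationary under $\pr$, while $N$ is the Palm version; these are different objects, and in particular your reading of ``finite intensity'' as $\esp[dN(u)]\le\lambda\,du$ under $\pr$ is unjustified. The boundary Cauchy--Schwarz argument has the same defect: it still leaves a sum $\sum_k\esp[\tau_k^{2p}\mathbf1\{t_{k-1}\in(s-1,s]\}]$ to control, which is the identical dependence problem with a different exponent.

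The paper handles exactly this obstacle by passing to the stationary measure $P^*$ via the inversion formula of Baccelli--Br\'emaud (their Formula~1.3.3): only under $P^*$ does the forward recurrence time $A(s)$ have a law independent of $s$, and then the Ryll--Nardzewski formula converts $\esp^*[(t_1+1)^{2p}]$ into a multiple of $\esp[(1+\tau_1)^{2p+1}]$. A Cauchy--Schwarz step $\esp^*[N(1)(\cdots)^p]\le\esp^*[N(1)^2]^{1/2}\esp^*[(\cdots)^{2p}]^{1/2}$ is what produces the exponent $2p+1$ in the moment hypothesis. Your Markov/dyadic device does not bypass this Palm machinery; it merely postpones the dependence problem to the summation step, where it reappears intact.
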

\begin{proof}[Proof of Lemma~\ref{lem:moment-frt}]
  We omit the index $i$. Let $\theta_t$ denote the shift operator and let $A(t)$
  be the forward recurrence time. Then $A(s) = t_{N(s)+1}-s = t_1 \circ
  \theta_s$.  Since the sequence $\{\tau_i\}$ is stationary under $\pr$,
  there exists a probability law $P^*$ such that $N$ is a stationary ergodic
  point process under $P^*$, see \citet[Section
  1.3.5]{baccelli:bremaud:2003}. Applying \citet[Formula
  1.3.3]{baccelli:bremaud:2003}, we obtain
  \begin{align}
    \esp[A^p(s)] & = \lambda^{-1} \esp^* \left[ \sum_{k=1}^{N(1)} t_1^p \circ
      \theta_s \circ \theta_{t_k} \right] = \lambda^{-1} \esp^* \left[
      \sum_{k=1}^{N(1)} A^p(s+t_k) \right] \nonumber    \\
    & = \lambda^{-1} \esp^* \left[ \sum_{k=1}^{N(1)} \{t_{N(s+t_k)+1} - s-t_k\}^p
    \right] \leq \lambda^{-1} \esp^* \left[ \sum_{k=1}^{N(1)} \{t_{N(s+1)+1} -
      s\}^p \right] \nonumber    \\
    & = \lambda^{-1} \esp^* [ N(1) \{t_{N(s+1)+1} - s\} ^p] \leq \lambda^{-1} \{
    \esp^* [ N(1)^2] \}^{1/2} \{\esp^*[(t_{N(s+1)+1} - s)^{2p} ]\}^{1/2} \;
    . \label{eq:quick-dirty}
  \end{align}
  Since $N$ is stationary under $P^*$, the last term does not depend on $s$, and
  by the Ryll-Nardzewski inversion formula \cite[Formula
  1.2.25]{baccelli:bremaud:2003}, we have
  \begin{align*}
    \esp^*[(t_{N(s+1)+1} - s)^{2p} ] = \esp^*[(t_1+1)^{2p}] = \lambda
    \esp[\int_0^{t_1} (t_1+1-s)^{2p}\, \mathrm d s \leq \lambda \esp[(1+t_1)^{2p+1}]
  \end{align*}
  By \citet[Property 1.6.3]{baccelli:bremaud:2003}, the point process $N$ is
  stationary and ergodic under $P^*$ since the sequence of durations $\tau_k$ is
  stationary and ergodic. Thus, by \citet[Theorem
  3.5.III]{daley:vere-jones:2003}, $\esp^*[N(0,1)^2]<\infty$.  Plugging the last
  two bounds into~(\ref{eq:quick-dirty}), we obtain that $\esp[A^p(s)]$ is
  uniformly bounded.
\end{proof}

\begin{lemma}
  \label{lem:time-deformation}
  Assume that there exists an increasing sequence $\{s_n, n\geq0\}$ such that $s_0=0$ and
  \begin{enumerate}[(a)]
  \item $f$ is either constant or strictly increasing and differentiable on
    $(s_n,s_{n+1})$ and the jumps of $f$ occur at some (but not necessarily all)
    of the $s_n$;
  \item if $f$ is eiter constant or increasing on both intervals $(s_n,s_{n+1})$
    and $(s_{n+1},s_{n+2})$, then $f$ has a jump at $s_{n+1}$.
  \end{enumerate}
  Assume moreover that
\begin{itemize}
\item \emph{(minimum duration of trading and nontrading periods)} there exists
  $\delta_0>0$ such that $s_{n+1}-s_n\geq \delta_0$ for all $n\geq0$;
\item \emph{(maximum duration of nontrading periods)} there exists $C_0$ such
  that for all $n\geq0$, if $f$ is constant on $(s_n,s_{n+1})$, then
  $s_{n+1}-s_n \leq C_0$;
\item \emph{(non stoppage of time during trading periods)} there exists
  $\delta_1>0$ such that for all $n\geq0$, $f$ is either constant on
  $(s_n,s_{n+1})$, or $f'(t) \geq \delta_1 $ for all $t \in(s_n,s_{n+1})$.
\end{itemize}

Let $\tilde N$ be a point process with event times $\{\tilde t_k\}$ and let $N$
be the point process defined by $N(\cdot) = \tilde N(f(\cdot))$ with event times
$\{t_k\}$.  If $\sup_{s\geq0} \esp[(\tilde t_{\tilde N(s)+1}-s)^p] < \infty$,
then $\sup_{s\geq0} \esp[( t_{ N(s)+1}-s)^p] < \infty$.
\end{lemma}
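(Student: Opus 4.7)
The plan is to derive a purely deterministic pathwise bound of the form
\[
t_{N(s)+1}-s \leq A \, \tilde A(f(s)) + B
\]
for all $s\geq0$, where $\tilde A(u) = \tilde t_{\tilde N(u)+1}-u$ is the forward recurrence time of $\tilde N$ and $A,B$ are constants depending only on $\delta_0$, $\delta_1$ and $C_0$. Once this bound is in hand, raising to the $p$th power, taking expectation, and (if $f$ is random) conditioning on $f$ and using its independence from $\tilde N$ will immediately give
\[
\esp[(t_{N(s)+1}-s)^p] \leq 2^{p-1}\bigl(A^p\esp[\tilde A(f(s))^p] + B^p\bigr) \leq 2^{p-1}(A^pM+B^p),
\]
where $M = \sup_u \esp[\tilde A(u)^p] < \infty$ by assumption.

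To establish the pathwise bound, I first observe, using the c\`adl\`ag monotonicity of $f$ and the definition $N(\cdot)=\tilde N(f(\cdot))$, that $t = t_{N(s)+1}$ is characterised as the smallest time $t>s$ at which $f(t)$ (or its right limit in the presence of a jump) reaches $v:=f(s)+\tilde A(f(s))$, i.e.\ the next event time of $\tilde N$ after $f(s)$. In other words, the real-time wait $t-s$ is the time needed for $f$ to accumulate $\tilde A(f(s))$ additional units of trading time starting from $s$.

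Next I would dissect $[s,t]$ according to the partition $\{s_n\}$. On any trading interval $(s_n,s_{n+1})$, condition (c) gives $f'\geq \delta_1$, so the real duration spent in trading intervals to accumulate trading time $\tilde A(f(s))$ is at most $\tilde A(f(s))/\delta_1$. Every nontrading interval contributes at most $C_0$ of real time by the maximum-duration assumption, and by (a)--(b) each nontrading interval of $[s,t]$ other than possibly the first one is preceded by a full trading interval of length $\geq \delta_0$ (hence contributing $\geq \delta_0\delta_1$ to the accumulated trading time). Hence if $K$ denotes the number of nontrading intervals traversed inside $[s,t]$, we have $(K-1)\delta_0\delta_1 \leq \tilde A(f(s))$, giving $K \leq \tilde A(f(s))/(\delta_0\delta_1)+1$. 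Combining,
\[
t-s \;\leq\; \frac{\tilde A(f(s))}{\delta_1} + K\,C_0 + C_0 \;\leq\; \Bigl(\frac{1}{\delta_1}+\frac{C_0}{\delta_0\delta_1}\Bigr)\tilde A(f(s)) + 2C_0,
\]
where the extra $C_0$ covers the case that $s$ itself lies inside a nontrading interval. This yields the desired constants $A$ and $B$.

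The main obstacle is the careful case analysis around the jumps of $f$ allowed by (b): in principle two consecutive trading (resp.\ nontrading) intervals may occur, separated only by a jump, and one must check that jumps do not invalidate the accounting of trading time versus real time. Since $f$ is non-decreasing and c\`adl\`ag and jumps only add to $f(t)-f(s)$ without consuming real time, jumps can only help us reach the level $v$ sooner, so the deterministic upper bound above remains valid. Once this is verified, the proof is complete.
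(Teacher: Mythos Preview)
Your argument is essentially the paper's: both establish a deterministic linear bound $t_{N(s)+1}-s \leq c_1 + c_2\,\tilde A(f(s))$ with $\tilde A(u)=\tilde t_{\tilde N(u)+1}-u$, and then take $p$th moments. The paper packages the same accounting via the left-continuous generalized inverse $f^{\leftarrow}(u)=\inf\{t:f(t)\geq u\}$, first verifying $t_k=f^{\leftarrow}(\tilde t_k)$ and then bounding increments of $f^{\leftarrow}$ by $C_0\lceil(t-s)/\delta_0\rceil+\delta_1^{-1}(t-s)$, which is exactly your trading\,/\,nontrading decomposition of $[s,t_{N(s)+1}]$ written in the inverse variable.
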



\begin{proof}[Proof of Lemma~\ref{lem:time-deformation}]
Define the nondecreasing left-continuous inverse $f^\leftarrow$ of a nondecreasing c\`adl\`ag function $f$ by
\begin{align*}
  f^\leftarrow (u) = \inf\{t \mid f(t) \geq u\} \; .
\end{align*}
  Note first that $f^\leftarrow (u) \leq t$ if only if $u \leq f(t)$ and
  $f^\leftarrow (f(t)) \leq t$. Thus we see that
\begin{align*}
  f^\leftarrow(\tilde t_n) \leq t & \Leftrightarrow \tilde t_n \leq f(t) \\
  & \Leftrightarrow \tilde N(f(t)) \geq n \\
  & \Leftrightarrow N(t) \geq n \; .
\end{align*}
This characterizes the sequence $\{t_n\}$, thus we obtain that $t_n =
f^\leftarrow (\tilde t_n)$.  The assumptions on $f$ imply the following
properties of $f^\leftarrow$.
\begin{itemize}
\item The jumps of $f^\leftarrow$ correspond to the intervals $(s_n,s_{n+1})$
  where $f$ is constant. More precisely, if $f$ is constant on $(s_n,s_{n+1})$,
  then $f^\leftarrow$ has a jump at $f(s_n)$ of size $s_{n+1}-s_n$. Since
  $f^\leftarrow$ is left continuous, it holds that
\begin{align*}
  f^\leftarrow (f(s_n)) = s_n \; , \ \ \lim_{u \to f(s_n), u>f(s_n)} = s_{n+1} \; .
\end{align*}
  Thus the jumps of $f^\leftarrow$ are of size $C_0$ at most.
\item If $f$ is increasing on an interval $(s_n,s_{n+1})$, then $f^\leftarrow$
  is differentiable on $(f(s_n),f(s_n^-))$ and $(f^\leftarrow)'(t) \leq
  \delta_1^{-1}$ for all $t \in (f(s_n),f(s_n^-))$.
\item The jumps of $f$ create no singularity in $f^\leftarrow$. If
  $f(s_n)>f(s_n^-)$, then $f^\leftarrow$ is constant on the interval
  $(f(s_n^-),f(s_n))$.
\end{itemize}
Let $\lceil x \rceil$ denote the smallest integer greater than or equal to the
real number $x$. Then, for $0 \leq s \leq t$,
\begin{align*}
  0 \leq f^\leftarrow (t) - f^\leftarrow (s) \leq C_0 \Big\lceil \frac{t-s}{\delta_0}
  \Big\rceil + \delta_1^{-1} (t-s) \; .
\end{align*}
Thus, there exits constants $c_1,c_2$ such that for all $s\leq t$,
\begin{align*}
  0 \leq f(t) - f(s) \leq c_1 + c_2(t-s) \; .
\end{align*}
Consider now the forward recurrence time of the point process $N$. Then
\begin{align*}
  0 & \leq t_{N(s)+1}-s = f^\leftarrow (\tilde t_{\tilde N(s)+1}) - f^\leftarrow (f(s)) + f^\leftarrow (f(s)) -s \\
  & \leq f^\leftarrow (\tilde t_{\tilde N(f(s))+1}) - f^\leftarrow (f(s)) \leq
  c_1 + c_2 \{\tilde t_{\tilde N(f(s))+1} - f(s)\} \; .
\end{align*}
Thus, there exists constants $c_3$ and $c_4$ such that
\begin{align*}
  \sup_{s\geq0} \esp[(t_{N(s)+1}-s)^p] \leq c_3 + c_4 \sup_{s\geq0} \esp[(\tilde
  t_{\tilde N(s)+1}-s)^p]
\end{align*}

\end{proof}

\begin{lemma}
  \label{lem:check-lmsd}
  Let $\{\epsilon_{k}\}$ be a sequence of i.i.d. positive random variables with
  finite mean $\mu_\epsilon$. Let $\{Y_{k}\}$ be a stationary standard Gaussian
  process such that
  \begin{align}
    \label{eq:hurstY-H}
    \mathrm{cov}(Y_{0},Y_{k}) =  \ell(n) n^{2H-2}
  \end{align}
  for $H \in (1/2,1)$ and $\ell$ a slowly varying function. For $k\geq1$, define
\begin{align*}
  \tau_k = \epsilon_k \mathrm e^{\sigma Y_k} \; .
\end{align*}
Then the sequence $\{\tau_k\}$ is ergodic and Assumption~\ref{hypo:lfgn-dates}
holds with $\lambda^{-1} = \mu_\epsilon \mathrm e^{\sigma^2/2}$.  If
$\pr(\epsilon_1>0)=1$ the Assumption~\ref{hypo:lfgn-pp} holds with
$\mu=\lambda = \mu_\epsilon^{-1} \mathrm e^{-\sigma^2/2}$. If moreover
$\esp[\epsilon_1^q]<\infty$ for all $q\geq1$, then (\ref{eq:pp-renforce}) and
(\ref{eq:pp-renforce-2}) hold.
\end{lemma}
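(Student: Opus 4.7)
The plan is to establish the four claims (ergodicity, convergence of $t_{i,k}/k$, convergence of $N_i(t)/t$, and the two forward-recurrence moment bounds) in sequence, each reducing to a standard ergodic-theoretic or moment calculation. For ergodicity of $\{\tau_k\}$, Maruyama's theorem (already invoked earlier in the paper via \cite{MR543837}) implies that the stationary Gaussian sequence $\{Y_k\}$ is ergodic since $\mathrm{cov}(Y_0,Y_k)\to0$. The i.i.d.\ sequence $\{\epsilon_k\}$ is mixing, and pairing it independently with $\{Y_k\}$ produces a stationary ergodic bivariate sequence $\{(\epsilon_k,Y_k)\}$; since $\tau_k=\epsilon_k\mathrm e^{\sigma Y_k}$ is a measurable function of this pair, $\{\tau_k\}$ is stationary and ergodic.

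Birkhoff's pointwise ergodic theorem then yields $k^{-1}t_{i,k}=k^{-1}\sum_{j=1}^k\tau_j\to\esp[\tau_1]$ almost surely, and by independence of $\epsilon_1$ and $Y_1$ together with the Gaussian moment generating function identity $\esp[\mathrm e^{\sigma Y_1}]=\mathrm e^{\sigma^2/2}$, the limit equals $\mu_\epsilon\mathrm e^{\sigma^2/2}=\lambda^{-1}$, which is exactly Assumption~\ref{hypo:lfgn-dates}. Under the additional hypothesis $\pr(\epsilon_1>0)=1$, the durations are almost surely strictly positive, so the counting process $N_i$ is simple, and the elementary duality $N_i(t)=k\Leftrightarrow t_{i,k}\le t<t_{i,k+1}$ transfers $k^{-1}t_{i,k}\to\lambda^{-1}$ into $t^{-1}N_i(t)\to\lambda$, giving Assumption~\ref{hypo:lfgn-pp}.

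For the moment bounds~(\ref{eq:pp-renforce}) and~(\ref{eq:pp-renforce-2}), I would invoke Lemma~\ref{lem:moment-frt}. Applied with $p=1$ (respectively $p=2$), that lemma demands a finite moment of order $3$ (respectively $5$) for the durations together with stationarity, ergodicity, positivity, and finite intensity, all of which have been established above. Since $\esp[\tau_1^q]=\esp[\epsilon_1^q]\,\mathrm e^{q^2\sigma^2/2}$, the standing assumption that all moments of $\epsilon_1$ are finite guarantees that all moments of $\tau_1$ are finite, so Lemma~\ref{lem:moment-frt} delivers both bounds at once. There is no genuine obstacle here: the argument is essentially a bookkeeping verification that the LMSD construction satisfies the running assumptions, with the only nonroutine ingredient being the reduction to Lemma~\ref{lem:moment-frt} for the forward recurrence time moments.
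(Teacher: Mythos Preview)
Your verification of ergodicity, Assumption~\ref{hypo:lfgn-dates} and Assumption~\ref{hypo:lfgn-pp} is fine and matches the paper's implicit reasoning. The gap is in your appeal to Lemma~\ref{lem:moment-frt}: one of its hypotheses is that the associated point process has \emph{finite intensity}, i.e.\ $\esp[N(t)]<\infty$ for some (hence all) $t>0$, and you assert that this ``has been established above'' when in fact it has not. Almost-sure convergence $N(t)/t\to\lambda$ does not by itself give an $L^1$ bound on $N(t)$, and in the LMSD setting with $H\in(1/2,1)$ this step is genuinely nontrivial; it is in fact the entire content of the paper's proof of the lemma.

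To see why it is not automatic, write $\esp[N(x)]=\sum_{k\ge1}\pr(t_k\le x)$. Under long-range dependence one has $\mathrm{var}(t_k)\asymp k^{2H}\ell(k)^2$, so Chebyshev only gives $\pr(t_k\le x)=O(k^{2H-2}\ell(k)^2)$, which is \emph{not} summable when $H>1/2$. The paper handles this by invoking the $p$-th moment bound $\esp\big[|t_k-k\mu|^p\big]=O\big(k^{pH}\ell(k)^p\big)$ from \cite{deo:hurvich:soulier:wang:2009} and applying Markov's inequality with $p$ so large that $p(1-H)>1$. This is precisely where the assumption $\esp[\epsilon_1^q]<\infty$ for \emph{all} $q\ge1$ enters: it is needed not merely for $q=3$ or $q=5$ as your argument suggests, but for some $q>1/(1-H)$, which is unbounded as $H\uparrow1$. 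Without this finite-intensity verification the reduction to Lemma~\ref{lem:moment-frt} is incomplete.
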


\begin{remark}
  If instead of~(\ref{eq:hurstY-H}) we assume that
  \begin{align*}
    \sum_{k=1}^\infty |\mathrm{cov}(Y_0,Y_k)| < \infty \; ,
  \end{align*}
  then the moment requirement can be relaxed to $\esp[\epsilon_1^3]<\infty$ to
  obtain~(\ref{eq:pp-renforce}) and $\esp[\epsilon_1^5]<\infty$ to
  obtain~(\ref{eq:pp-renforce-2}).
\end{remark}

\begin{proof}[Proof of Lemma~\ref{lem:check-lmsd}]
  Note first that $\esp[\tau_{k}^p]<\infty$ as long as
  $\esp[\epsilon_{1}^p]<\infty$.  By Lemma~\ref{lem:moment-frt}, in order to
  check condition~(\ref{eq:pp-renforce}), we must only prove that the induced
  point process has finite intensiy, i.e.  there exists $t>0$ such that
  $\esp[N(t)]<\infty$. See \citet[Section 1.3.5]{baccelli:bremaud:2003}. Note
  that
  \begin{align*}
    \esp[N(x)] = \sum_{k=1}^\infty \pr(N(x) \geq k) = \sum_{k=1}^\infty
    \pr(t_k \leq x) \; .
  \end{align*}
  Thus, it suffices to prove that the series on the righthand side is
  summable. Denote $\mu=\esp[\tau_k]$ and $\rho_n =
  \mathrm{cov}(Y_0,Y_n)$. Applying \citet [Proposition~1]
  {deo:hurvich:soulier:wang:2009}, we have
  \begin{align*}
    \esp \left[ \left| \sum_{k=1}^n \tau_{k}-n\mu \right|^p
    \right] = O(v_n^p) \;
  \end{align*}
  with $v_n = n^H \ell(n)$.  If $\esp[\epsilon_1^p]<\infty$ for $p$ such that
  $p(1-H)>1$, for $n$ such that $n\mu>x$, it holds that
  \begin{align*}
    \pr(t_k \leq x) = O(x^{-1} v_k^p)
  \end{align*}
  and this series is summable.
\end{proof}




\begin{lemma}
  \label{lem:strong-noleverage-continuous}
  Assume that $\{\tau_k\}$ and $\{\xi_{k}\}$ are mutually independent stationary
  sequences such that $\esp[\xi_k]=0$, $\esp[\tau_k^2] <\infty$ and
  $\esp[\xi_{k}^2] < \infty$. Assume that the sequence of durations is weakly
  stationary and that $\mathrm{cov}(\tau_0,\tau_n)=0(n^{-\delta})$ for some
  $\delta>0$ and $\sup_{s\geq0} \esp[t_{N(s)+1}-s]<\infty$. Assume that
  $\mathrm{cov}(\xi_1,\xi_n) \sim c n^{2H-2}$, with $H\in(1/2,1)$ and $c>0$, and
  that
  \begin{align*}
    n^{-H}  \sum_{k=1}^{[n\cdot]} \xi_k \func c' B_H
  \end{align*}
  for some $c'>0$.  Then
$$
n^{-H} \int_0^{Tt} \xi_{N(s)} \, \mathrm d s \func c^{\prime\prime}B_H(t) \;
$$
for some $c^{\prime\prime}>0$.
\end{lemma}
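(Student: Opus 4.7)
The plan is to reduce $\int_0^{Tt}\xi_{N(s)}\,\mathrm ds$ to a discrete sum governed by the assumed FCLT, and to show that the durational fluctuations around the mean spacing $\lambda^{-1}$ contribute negligibly thanks to the mutual independence of $\{\tau_k\}$ and $\{\xi_k\}$. Since $\xi_{N(s)}=\xi_k$ on $[t_k,t_{k+1})$, write
\begin{align*}
\int_0^{Tt}\xi_{N(s)}\,\mathrm ds \;=\; \sum_{k=0}^{N(Tt)-1}\tau_{k+1}\xi_k + (Tt-t_{N(Tt)})\,\xi_{N(Tt)},
\end{align*}
and decompose $\tau_k=\lambda^{-1}+\tilde\tau_k$ to obtain a leading term $\lambda^{-1}\sum_{k=0}^{N(Tt)-1}\xi_k$, a cross term $V_T(t):=\sum_{k=0}^{N(Tt)-1}\tilde\tau_{k+1}\xi_k$, and a boundary term.

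For the leading term, set $S_T(u)=T^{-H}\sum_{k=1}^{[Tu]}\xi_k$. By hypothesis, $S_T \Rightarrow c'B_H$. Lemma~\ref{lem:dini} applied in the present setting gives $N(T\cdot)/T \stackrel P\to \lambda\cdot$ uniformly on compact sets; as in the proof of Theorem~\ref{theo:fclt-levels}, the continuous composition map on $\mathcal D([0,1])$ then yields
\begin{align*}
T^{-H}\lambda^{-1}\sum_{k=1}^{N(T\cdot)}\xi_k \;=\; \lambda^{-1}\,S_T\!\big(N(T\cdot)/T\big) \;\Rightarrow\; \lambda^{-1}c'\,B_H(\lambda\cdot) \;\stackrel{d}{=}\; c'\lambda^{H-1}\,B_H(\cdot),
\end{align*}
which identifies the limit with $c''=c'\lambda^{H-1}>0$.

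To finish, it suffices to show that both the cross term and the boundary term are $o_P(T^H)$ uniformly in $t\in[0,1]$. For the cross term, mutual independence gives
\begin{align*}
\esp\Big[\Big(\sum_{k=1}^{n}\tilde\tau_k\xi_{k-1}\Big)^2\Big] \;=\; \sum_{k,l=1}^{n}\mathrm{cov}(\tau_k,\tau_l)\,\esp[\xi_{k-1}\xi_{l-1}],
\end{align*}
which, using $|\mathrm{cov}(\tau_k,\tau_l)|=O(|k-l|^{-\delta})$ and $|\esp[\xi_{k-1}\xi_{l-1}]|=O((1+|k-l|)^{2H-2})$, is of order $O(n^{\max(1,2H-\delta)})=o(n^{2H})$. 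Since $N(T)=O_P(T)$, a Kolmogorov-type chaining over a dyadic partition of $[0,1]$, applying the same moment bound to increments $V_T(t_2)-V_T(t_1)$, gives $T^{-H}\sup_{t\in[0,1]}|V_T(t)|\stackrel P\to 0$. For the boundary term, $(Tt-t_{N(Tt)})|\xi_{N(Tt)}|\leq \tau_{N(Tt)+1}|\xi_{N(Tt)}|$; by independence and a union bound,
\begin{align*}
\pr\Big(\max_{k\leq N(T)}\tau_{k+1}|\xi_k|>\varepsilon T^H\Big) \;\leq\; \frac{\esp[N(T)]\,\esp[\tau_1^2]\,\esp[\xi_0^2]}{\varepsilon^2\,T^{2H}} \;=\; O(T^{1-2H})\to 0,
\end{align*}
since $H>1/2$. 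Combining the three contributions yields $T^{-H}\int_0^{T\cdot}\xi_{N(s)}\,\mathrm ds \Rightarrow c''B_H$.

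The main obstacle is the uniform (in $t$) control of $V_T$: the partial sums $M_n:=\sum_{k=1}^{n}\tilde\tau_k\xi_{k-1}$ do not form a martingale in any natural filtration, and the long-range dependence in $\{\tau_k\}$ precludes a direct Doob-type maximal inequality. The second-moment bound displayed above is precisely what is needed to push through the chaining argument; all remaining steps are routine given the hypothesized FCLT for $\{\xi_k\}$ and the uniform ergodic behavior of $N$ provided by Lemma~\ref{lem:dini}.
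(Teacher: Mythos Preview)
Your proof is correct and follows essentially the same route as the paper: the same decomposition $\int_0^{Tt}\xi_{N(s)}\,\mathrm ds=\sum_{k}\tau_{k+1}\xi_k+\text{edge}$, the same split $\tau_{k+1}=\mu+(\tau_{k+1}-\mu)$, the same variance bound $O(n^{2H-\delta})$ on the cross term via independence, and the same continuous-mapping argument for the leading term. The paper's proof is terser---it simply asserts $n^{-H}\sum_{k\le[n\cdot]}(\tau_{k+1}-\mu)\xi_k\Rightarrow 0$ from the variance bound and treats the edge term as $O_P(1)$ at a fixed time---whereas you supply the chaining and union-bound details needed for the uniform control; note that your chaining step goes through cleanly once you adopt the paper's WLOG reduction $2H-\delta>1$, and your union bound should be phrased via $N(T)=O_P(T)$ rather than $\esp[N(T)]$, which is not directly assumed.
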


\begin{proof}[Proof of Lemma~\ref{lem:strong-noleverage-continuous}]
  Denote $\esp[\tau_k]=\mu>0$.
\begin{align*}
  \int_0^{T} \xi_{N(s)} \, \mathrm d s & = \sum_{k=0}^{N(T)} \tau_{k+1} \xi_k
  - (t_{N(T)+1}-T)  \xi_{N(T)+1}  \\
  & = \sum_{k=0}^{N(T)} (\tau_{k+1}-\mu) \xi_{k} + \mu \sum_{k=0}^{N(T)} \xi_k -
  (t_{N(T)+1}-T) \xi_{N(T)+1}\; .
  \end{align*}
  By independence of $\{\tau_k\}$ and $\{\xi_k\}$, we have (assuming without
  loss of generality that $2H-\delta>1$),
\begin{align*}
  \mathrm {var} \left( \sum_{k=0}^{n} (\tau_{k+1} - \mu) \xi_k \right) =
  O(n^{2H-\delta}) \; .
\end{align*}
Thus, $n^{-H} \sum_{k=0}^{[n\cdot]} (\tau_{k+1} -\mu) \xi_k \func 0$.
Hence by the continuous mapping theorem, it also holds that $n^{-H}
\sum_{k=0}^{N(T\cdot)} (\tau_{k+1} -\mu) \xi_k \func 0$. By independence
and by assumption, $(t_{N(t)+1}-T)\xi_{N(T)} = O_P(1)$. By the continuous mapping
theorem, $n^{-H}\sum_{k=0}^{N(Tt)} \xi_k \func c'B_H(\mu^{-1} t)$.
\end{proof}

\begin{lemma}
 \label{lem:standard-dependence-noleverage}
 Let $\{\tau_k\}$, $\{V_k\}$ and $\{\zeta_k\}$ be sequences of random variables
 such that
  \begin{itemize}
  \item $\{\zeta_k\}$ is an i.i.d. sequence of zero-mean and unit variance
    random variables; $\{\tau_k\}$ and $\{V_k\}$ are sequences of positive
    random variables;
  \item the sequences $\{(\tau_k,V_k)\}$ and $\{\zeta_k\}$ are mutually independent;
  \item there exists $s>0$ such that $n^{-1}\sum_{k=1}^n \tau_{k+1}^2 V_k^2 \plim s^2$;
  \item $\sup_{k\geq0} \esp[\tau_{k+1}^{2+\varepsilon} V_k^{2+\varepsilon}] <
    \infty$ for some $\varepsilon>0$;
  \item $\sup_{s\geq0}\esp[t_{N(s)+1}-s]<\infty$.
  \end{itemize}
  Define $\xi_k = \zeta_k V_k$. Then $ T^{-1/2}\int_0^{T\cdot} \xi_{N(s)} \, \mathrm
  d s \func cB$ for some $c>0$.
\end{lemma}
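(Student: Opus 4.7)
The plan is to exploit the structure $\xi_k = \zeta_k V_k$, in which the i.i.d.\ innovations $\{\zeta_k\}$ are independent of $\{(\tau_k,V_k)\}$, so that $\int_0^T \xi_{N(s)}\,\mathrm ds$ is, up to a negligible boundary term, a martingale sum of the form $\sum_{k=0}^{N(T)-1}\tau_{k+1}V_k\zeta_k$. A martingale functional CLT combined with a random time-change will then deliver the conclusion.

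I would begin with the elementary identity
$$\int_0^T \xi_{N(s)}\,\mathrm ds = \sum_{k=0}^{N(T)-1} \tau_{k+1} V_k \zeta_k + (T-t_{N(T)})\, V_{N(T)}\zeta_{N(T)} =: M_{N(T)} + R_T.$$
Since $0\leq T-t_{N(T)}\leq t_{N(T)+1}-T$, the assumption $\sup_s\esp[t_{N(s)+1}-s]<\infty$, combined with the uniform $(2+\varepsilon)$-moment bound on $\tau_{k+1}V_k$ (which, by positivity of $\tau_k$ and a localization on $\{\tau_k\geq\delta\}$, also controls $V_k$ in $L^p$ for some $p>1$) and the independence of $\zeta_k$ from everything else, yields via Hölder that $R_T=O_P(1)$, hence $T^{-1/2}R_T\stackrel P\to0$.

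Next, setting $\mathcal F=\sigma(\{(\tau_k,V_k)\}_{k\geq0})$ and $\mathcal G_n=\mathcal F\vee\sigma(\zeta_0,\dots,\zeta_{n-1})$, the process $M_n=\sum_{k=0}^{n-1}\tau_{k+1}V_k\zeta_k$ is a $(\mathcal G_n)$-martingale with predictable quadratic variation $\langle M\rangle_n=\sum_{k=0}^{n-1}\tau_{k+1}^2V_k^2$. The assumption $n^{-1}\langle M\rangle_n\stackrel P\to s^2$, combined with the monotonicity of $n\mapsto\langle M\rangle_n$ and Dini's theorem, upgrades to $n^{-1}\langle M\rangle_{[nt]}\stackrel P\to s^2t$ uniformly on compact $t$-intervals. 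The Lindeberg condition is then obtained by conditioning on $\mathcal F$: on $\{\tau_{k+1}V_k\leq A\}$ one uses uniform integrability of $\zeta_0^2$ (which holds because $\esp[\zeta_0^2]=1$), while the complementary event is controlled by the $(2+\varepsilon)$-moment bound and a Markov estimate. The Hall--Heyde martingale FCLT then yields $n^{-1/2}M_{[n\cdot]}\Rightarrow sB(\cdot)$ in the Skorohod $J_1$ topology.

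Finally, I would invoke a random time-change. The standing assumptions imply that $N(Tt)/T\stackrel P\to\lambda t$ uniformly on compact $t$-intervals for some $\lambda>0$, by the argument underlying Lemma~\ref{lem:dini} together with the bounded forward recurrence time. Continuity of the composition map on pairs with continuous limit then gives $T^{-1/2}M_{N(T\cdot)}\Rightarrow sB(\lambda\cdot)\stackrel d=s\sqrt\lambda\,B(\cdot)$, and combining with the negligibility of $R_T$ concludes the proof with $c=s\sqrt\lambda$. The main obstacle is the Lindeberg verification, since only a second moment is assumed on $\zeta$; the conditioning argument sketched above is the standard way to bypass the lack of an $(2+\varepsilon)$-moment on $\zeta$. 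The random time-change itself causes no trouble because $\{\zeta_k\}$ is independent of $N$, which obviates any Anscombe-style adjustment.
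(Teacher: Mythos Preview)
Your approach is essentially the paper's: write $\int_0^T\xi_{N(s)}\,\mathrm ds$ as a martingale transform $\sum\tau_{k+1}V_k\zeta_k$ plus a boundary correction, invoke the Hall--Heyde martingale FCLT, and then pass through the random time change $n\mapsto N(T\cdot)$ via the continuous mapping theorem. The paper uses the filtration $\mathcal F_k=\sigma(\tau_{j+1},\zeta_j,V_j:j\le k)$ rather than your $\mathcal G_n=\sigma(\{(\tau_j,V_j)\}_j)\vee\sigma(\zeta_0,\dots,\zeta_{n-1})$; your choice is equally valid and arguably cleaner, since it makes $\tau_{k+1}V_k$ manifestly predictable and isolates the i.i.d.\ innovations for the Lindeberg verification you sketch.

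One slip: the inequality $T-t_{N(T)}\le t_{N(T)+1}-T$ is false in general (the backward recurrence time can exceed the forward one). The paper avoids this by summing up to $k=N(T)$ rather than $N(T)-1$, so that the boundary term is $-(t_{N(T)+1}-T)\,\xi_{N(T)}$, which involves the \emph{forward} recurrence time directly and is thus controlled by the hypothesis $\sup_s\esp[t_{N(s)+1}-s]<\infty$. With that cosmetic change to the decomposition your argument goes through unchanged; the related localization ``$\{\tau_k\ge\delta\}$ controls $V_k$ in $L^p$'' is then no longer needed.
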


\begin{proof}
  Let $\mathcal F_k$ be the sigma-field generated by random variables
  $\{\tau_{j+1},\zeta_j,V_j, j\leq k\}$.  Then $\esp[ \xi_k \tau_{k+1} \mid
  \mathcal F_{k-1}] = \tau_{k+1} V_k \esp[\zeta_k] = 0$. Thus,
  $\{\tau_{k+1}\xi_k\}$ is a martingale difference sequence.
  Under the stated assumptions, the martingale invariance principle
  \citet[Theorem~4.1]{MR624435} yields that $n^{-1/2} \sum_{k=1}^{[n\cdot]}
  \tau_{k+1}  \xi_k \func cB$ for some $c>0$.  As in the proof of
  Lemma~\ref{lem:strong-noleverage-continuous}, denote $\esp[\tau_k]=\mu>0$ and
  write
\begin{align*}
  \int_0^{T} \xi_{N(s)} \, \mathrm d s & = \sum_{k=0}^{N(T)} \tau_{k+1} \xi_k
  + (t_{N(T)+1}-T)  \xi_{N(T)}  \; .
  \end{align*}
  By the continuous mapping theorem, we have that $T^{-1/2} \sum_{k=1}^{N(T\cdot
    )} \tau_{k} \xi_{-1} \func \lambda cB$. As previously, the last term is a
  negligible edge effect. This concludes the proof.
\end{proof}

\begin{lemma}
   \label{lem:negative-moments}  
   Let $N$ be a stationary point process under $P$ with intensity $\lambda$ and
   let $P^0$ denote the Palm probability associated to $P$. Let $\gamma>0$.
   Assume that there exist $\delta\in(0,1)$ and $q>0$ such that
   \begin{align}
   \label{eq:bound-tk-palm}
     \sup_{k\geq1} k^{-q\delta}\esp^0[|t_k-\lambda^{-1}k|^q] < \infty \; .
   \end{align}
   If (\ref{eq:bound-tk-palm}) holds with $q\geq\gamma+1$, then
  \begin{align}
  \label{eq:moment-cond}
  \sup_{t\geq 2} \esp\left[ \left(
      \frac{N_i(t)}t\right)^{-\gamma}\mathbf1_{\{N_i(t)>0\}} \right] < \infty \; .
  \end{align}
  If~(\ref{eq:bound-tk-palm}) holds with $q>1+\gamma/(1-\delta)$, then
  $\esp[N^\gamma(1)]<\infty$.
\end{lemma}

\begin{proof}
  For $k\geq2$, define $c_k = (k-1)^{-\gamma} - k^{-\gamma}$. Then,
  $\sum_{k=2}^\infty c_k = 1$ and applying summation by parts, we have
\begin{align*}
  \esp [ N^{-\gamma} (t) \mathbf1_{\{N(t)>0\}}] & = \sum_{k=1}^\infty
  k^{-\gamma} \pr(N(t)=k) = \sum_{k=1}^\infty k^{-\gamma} \{ \pr(N(t) \geq k) -  \pr(N(t)\geq k+1)\} \\
  & = \pr(N(t) \geq 1) - \sum_{k=2}^\infty c_k \pr(N(t) \geq k) \\
  & =  \pr(t_1 \leq t) - \sum_{k=2}^\infty c_k \pr(t_k \leq t) 
  = -  \pr(t_1>t) + \sum_{k=2}^\infty c_k  \pr(t_k>t) \; .
\end{align*}
Without loss of generality, assume that the intensity of the point process is
$\lambda=1$. Then, by definition of $c_k$, we have, for $t\geq2$, 
\begin{align*}
t^\gamma   \sum_{k \geq [t/2]+1} c_k \pr(t_k>t) \leq t^\gamma ([t/2])^{-\gamma} = O(1) \; .
\end{align*}
For $k \leq [t/2]$, we have, by Markov's inequality, 
\begin{align*}
  \pr(t_k >t) = \pr(t_k - k > t - k) \leq \pr(t_k - k > t/2) \leq c t^{-\gamma}
  \esp[|t_k-k|^\gamma]
\end{align*}
Applying the Ryll-Nardzewski inversion formula (\citet[Formula
1.2.25]{baccelli:bremaud:2003}), we have 
\begin{align*}
  \esp[|t_k-k|^\gamma] = \esp^0[t_1|t_k-k|^\gamma] \leq
  \{\esp^0[t_1^{1+\gamma}]\}^{1/(\gamma+1)} \{\esp^0[| t_k - k|^{\gamma+1}|]\}^{\gamma/(\gamma+1)} \; .
\end{align*}
Thus, applying Condition~(\ref{eq:bound-tk-palm}), we obtain that $\pr(t_k >t)
\leq c' t^{-\gamma} k^{\gamma\delta}$ and thus
\begin{align*}
  t^{\gamma} \sum_{2 \leq k \leq [t/2]} c_k \pr(t_k >t) \leq c' \sum_{2 \leq k
    \leq [t/2]} c_k k^{\gamma\delta} \leq c' \sum_{2 \leq k \leq [t/2]}
  k^{-\gamma(1-\delta)-1} = O(1) \; .
\end{align*}
This concludes the proof of~(\ref{eq:moment-cond}). We now consider the positive moments of $N(1)$. 
Applying summation by part, we have
\begin{align*}
  \esp[N^\gamma(1)] = \sum_{k=1}^\infty \{k^\gamma-(k-1)^\gamma\} \pr(N(1)\geq
  k) = \sum_{k=1}^\infty \{k^\gamma-(k-1)^\gamma\} \pr( t_k \leq 1) \; .
\end{align*}
For $k\geq2$ and $q>0$, we have, still assuming that $\lambda=1$, 
\begin{align*}
  \pr( t_k \leq 1) & \leq \pr(t_k - k \leq -k/2) \leq \esp[|t_k-k|^q] k^{-q} \; .
\end{align*}
Applying again the Ryll-Narzewski formula and
Condition~(\ref{eq:bound-tk-palm}), we obtain, for $k\geq2$,
\begin{align*}
  \pr( t_k \leq 1) \leq c k^{-q(1-\delta)} \; .
\end{align*}
Thus, 
\begin{align*}
  \esp[N^\gamma(1)] \leq 1 +  c \sum_{k=1}^\infty \{k^\gamma-(k-1)^\gamma\} k^{-q(1-\delta)} \; .
\end{align*}
The series is convergent as long as $q(1-\delta)>\gamma$. 
\end{proof}

\begin{proof}[Proof of~(\ref{eq:moment-cond-8}) for the LMSD model]
  Consider the LMSD model of Example~\ref{xmpl:LMSD}.  It is proved in
  \citet[Proposition~1]{MR2507532} that (\ref{eq:bound-tk-palm}) holds with
  $\delta = H_\tau$ if $\esp[\epsilon_0^p]<\infty$ for all $p\geq1$.  Actually,
  a close inspection of the first  lines of the proof shows that only $q$
  finite moments of $\epsilon_0$ are needed. Thus (\ref{eq:moment-cond-8}) holds if
  $E^0[\epsilon_0^{9-4H}]<\infty$, and $\esp[N^4(1)]<\infty$ if
  $E^0[\epsilon_0^q]<\infty$ for some $q>1+4/(1-H_\tau)$.
\end{proof}

\begin{proof}[Proof of~(\ref{eq:moment-cond-8}) for the ACD model]
  Under the assumptions of Example~\ref{xmpl:ACD}, the sequence $\{\tau_k\}$ is
  geometrically $\beta$-mixing \citet[Proposition~17]{MR1885348}. Denote
  $m=\esp^0[\tau_1]$.  The sequence $\{t_k\}$ is geometrically mixing, hence
  geometrically strong mixing. Thus, by \citet[Theorem~2.5]{rio:2000}, for
  $q\geq2$, if $E^0[\tau_1^{q+1+\epsilon}]<\infty$ for some $\epsilon>0$, then
  $E^0[|t_n-mn|^{q+1}] = O( n^{(q+1)/2})$. Thus~(\ref{eq:bound-tk-palm})
  holds with $\delta=1/2$.
\end{proof}

\end{document}